\PassOptionsToPackage{numbers,sort&compress}{natbib}
\documentclass{svjour3}

\makeatletter
\def\cl@chapter{}%
\makeatother

\journalname{}
\smartqed

\usepackage{fix-cm}
\usepackage{graphicx}
\usepackage{amsmath,amssymb,amsfonts,mathtools}
\usepackage{microtype}
\usepackage{array,booktabs,tabularx}
\usepackage{flafter}
\usepackage{natbib}
\usepackage[colorlinks=true,linkcolor=blue,citecolor=blue,urlcolor=blue]{hyperref}
\usepackage[nameinlink,capitalize,noabbrev]{cleveref}
\renewcommand{\subclassname}{\textbf{Mathematics Subject Classification (2020)}\hfil\break}

\numberwithin{equation}{section}

\crefname{theorem}{Theorem}{Theorems}
\crefname{lemma}{Lemma}{Lemmas}
\crefname{proposition}{Proposition}{Propositions}
\crefname{corollary}{Corollary}{Corollaries}
\crefname{assumption}{Assumption}{Assumptions}
\crefname{definition}{Definition}{Definitions}
\crefname{remark}{Remark}{Remarks}
\crefname{equation}{equation}{equations}
\crefname{section}{Section}{Sections}
\crefname{appendix}{Appendix}{Appendices}
\newcolumntype{Y}{>{\raggedright\arraybackslash}X}

\DeclareMathOperator{\diag}{diag}
\DeclareMathOperator{\supp}{supp}
\DeclareMathOperator{\spec}{\sigma}
\newcommand{\R}{\mathbb R}
\newcommand{\eps}{\varepsilon}
\newcommand{\cH}{c_H}
\newcommand{\cA}{c_A}
\newcommand{\cP}{\mathcal P}
\newcommand{\norm}[1]{\lVert #1\rVert}
\newcommand{\ip}[2]{\langle #1,#2\rangle}
\newcommand{\ind}{\mathbf 1}
\newcommand{\doi}[1]{\href{https://doi.org/#1}{\nolinkurl{https://doi.org/#1}}}

\hypersetup{
  pdftitle={The Sharp Worst-Case Asymptotic Rate of the Barzilai--Borwein Method in Rd and Hilbert Spaces},
  pdfauthor={Shutai Yang and Ya-Xiang Yuan},
  pdfkeywords={Barzilai--Borwein method, sharp asymptotic rate, spectral measure, Hilbert space, nonlinear localization, ergodic optimization}
}

\begin{document}

\title{The Sharp Worst-Case Asymptotic Rate of the Barzilai--Borwein Method in $\R^d$ and Hilbert Spaces %
\thanks{The work of Shutai Yang was supported by the National Undergraduate Training Program for Innovation and Entrepreneurship (grant No.~202510358091).}}

\titlerunning{Sharp Worst-Case Asymptotic Rate of BB}

\author{Shutai Yang \and Ya-Xiang Yuan}
\authorrunning{S. Yang and Y.-X. Yuan}

\institute{Shutai Yang \at
    State Key Laboratory of Scientific and Engineering Computing, Academy of Mathematics and Systems Science, Chinese Academy of Sciences, and University of Chinese Academy of Sciences, Beijing, China \\
    School of Mathematical Sciences, University of Science and Technology of China, Hefei, China \\
    \email{taitai@mail.ustc.edu.cn}
    \and
    Ya-Xiang Yuan \at
    State Key Laboratory of Scientific and Engineering Computing, Academy of Mathematics and Systems Science, Chinese Academy of Sciences, Beijing, China \\
    \email{yyx@lsec.cc.ac.cn}
}

\date{Received: date / Accepted: date}
% The correct dates will be entered by the editor.

\maketitle

\begin{abstract}
We establish sharp asymptotic rates for the two Barzilai--Borwein (BB) rules on uniformly positive quadratics and local nonlinear problems.  In finite dimensions, for either fixed rule and an arbitrary positive first step, the gradient root factor is bounded by $(b_0-a_0)/(b_0+a_0)$, where $[a_0,b_0]$ is the initially active spectral interval.  Hence the worst trajectory factor is $c_H=(\kappa(H)-1)/(\kappa(H)+1)$.  When $H$ has at least two distinct eigenvalues, matched initialization and a balanced endpoint trajectory attain this value.  Under matched initialization, the same constant is the optimal uniform-envelope threshold.  For bounded, self-adjoint, uniformly positive operators on Hilbert space, scalar spectral measures yield the corresponding active-support bound and optimal matched uniform-envelope threshold, including continuous endpoint spectrum.  Finally, if the gradient is strictly Fr\'echet differentiable at a stationary point and its derivative is self-adjoint and uniformly positive, every $\gamma\in(c_*,1)$, where $c_*=(\kappa(A_*)-1)/(\kappa(A_*)+1)$, is a uniform local envelope rate for either pure BB rule.  Every well-defined trajectory converging to the stationary point has error and gradient root factors at most $c_*$ and objective-gap root factor at most $c_*^2$.  Over the class of objectives with prescribed distinct derivative endpoints $m_*<M_*$, matched endpoint trajectories for quadratic and $C^\infty$ genuinely nonquadratic examples in $\R^2$ attain these factors.
\keywords{Barzilai--Borwein method \and sharp asymptotic rate \and spectral measure \and Hilbert space \and nonlinear localization \and ergodic optimization}
\subclass{90C25 \and 65K05 \and 47A10 \and 37A30}
\end{abstract}

\section{Introduction}\label{sec:introduction}

Consider first the strongly convex quadratic problem
\begin{equation}\label{eq:quad-general}
  \min_{x\in\R^d} q(x)=\frac12 x^T Hx-b^Tx,
  \qquad H=H^T\succ0,
\end{equation}
with minimizer $x_*=H^{-1}b$ and gradients $g_k=H(x_k-x_*)$.  The BB1 iteration is
\begin{equation}\label{eq:BB1-general}
  x_{k+1}=x_k-\alpha_k g_k,
  \qquad \alpha_0>0,
  \qquad
  \alpha_k=\frac{g_{k-1}^Tg_{k-1}}{g_{k-1}^THg_{k-1}}
  \quad(k\ge1).
\end{equation}
The delayed Rayleigh quotient is the main structural feature.  For a current-gradient rule, spectral normalization gives a one-step transformation of a probability measure.  For BB1, the normalized state consists of two consecutive spectral-energy distributions.  This one-step memory prevents a direct reduction to the standard one-measure dynamics.

Barzilai and Borwein introduced the two classical secant choices and analyzed their two-dimensional behavior \cite{BarzilaiBorwein1988}.  Raydan proved global convergence of both choices on finite-dimensional strictly convex quadratics, and Dai and Liao established $R$-linear convergence in arbitrary dimension for BB1, with an analogous BB2 conclusion \cite{Raydan1993,DaiLiao2002}.  These works did not determine the worst trajectory root factor; Fletcher noted the absence of a realistic rate estimate \cite{Fletcher2005}.  Dai and Fletcher investigated a dimension-dependent transition between superlinear and linear behavior, while Dai derived the exact logarithmic recurrence in two dimensions and distinguished generic $R$-superlinear convergence from the equal-energy linear exception \cite{DaiFletcher2005,Dai2013}.

For matched BB1, Li and Sun \cite{LiSun2021} proved componentwise bounds with factor $1-1/\kappa(H)$ and constructed an endpoint-supported trajectory with exact factor
\[
  c_H=\frac{\kappa(H)-1}{\kappa(H)+1}.
\]
This leaves a gap between the upper and lower factors.  Li and Huang's Property~B gives componentwise $R$-linear bounds with factor $1-\lambda_{\min}/M_1$ for a broad class of retarded and hybrid rules, refined to $1-1/\kappa(H)$ when the reciprocal steps are bounded above by $\lambda_{\max}(H)$ \cite{LiHuang2025}.  For every $d\ge4$, Li, Jiang, and Hong used a computer-assisted attracting period-seven construction to exhibit open families of strictly convex quadratics with corresponding open sets of initial points on which BB1 converges without root-superlinear convergence \cite{LiJiangHong2026}.  We seek the universal sharp upper threshold.

Probability-measure dynamics for current-gradient methods go back to Akaike's endpoint analysis of exact steepest descent \cite{Akaike1959}.  Pronzato, Wynn, and Zhigljavsky extended the endpoint and two-cycle analysis to $P$-gradient rules in finite dimensions and Hilbert spaces and explicitly excluded BB from their class \cite{PronzatoWynnZhigljavsky2006}.  Under the reparameterization $P(\lambda)=\Psi(\lambda)/\lambda$, Huang, Dai, Liu, and Zhang obtained related finite-dimensional endpoint and asymptotic-ratio limits \cite{HuangDaiLiuZhang2022Asymptotic}.  The optimally tuned fixed-step method and exact steepest descent have the same worst asymptotic gradient-norm factor $c_H$ \cite{Akaike1959,Forsythe1968,Nocedal2002}.

Friedlander, Mart\'inez, Molina, and Raydan proved global convergence for a bounded-retard Rayleigh-quotient framework containing both BB choices; see \cite{FriedlanderMartinezMolinaRaydan1999,ZouMagoules2022}.  In Hilbert space, Azmi and Kunisch proved spectrally uniform $R$-linear convergence with continuous spectrum and iteration-dependent BB1/BB2 selection \cite{AzmiKunisch2020}.  We instead determine the sharp matched uniform-envelope threshold for each fixed rule.

For nonlinear problems, frozen-quadratic comparisons have yielded local results for pure, cyclic, and Hilbert-space BB methods under $C^3$ or locally Lipschitz-Hessian assumptions, sometimes conditional on trajectory convergence \cite{LiuDai2001,DaiLiao2002,DaiHagerSchittkowskiZhang2006,AzmiKunisch2022}.  We ask whether the sharp fixed-rule quadratic threshold persists under strict Fr\'echet differentiability of the gradient.

Our contributions are threefold.

First, in finite dimensions, if $a_0<b_0$ are the endpoints of the initially active spectrum, then, for either fixed BB rule and every $\alpha_0>0$, the gradient root factor is at most $(b_0-a_0)/(b_0+a_0)$.  Consequently, the supremum over the initial gradient and the positive first step equals $c_H$.  If $H$ has at least two distinct eigenvalues, a matched, balanced endpoint orbit attains this value.  Under matched initialization, $c_H$ is also the infimum of the uniform envelope rates: every $\gamma\in(c_H,1)$ is such a rate, whereas no $\gamma\in(0,c_H)$ is.  A fixed spectral conjugacy transfers these conclusions to BB2 and to fixed positive weighted delayed Rayleigh rules.

Second, for a bounded, self-adjoint, uniformly positive operator $A$ on a Hilbert space, scalar spectral measures give the same active-support bound for every positive first step.  Under matched initialization,
\[
  c_A=\frac{\max\spec(A)-\min\spec(A)}
            {\max\spec(A)+\min\spec(A)}
\]
is the optimal uniform-envelope threshold, even when one or both spectral endpoints lie only in the continuous spectrum.  If the two spectral endpoints are distinct eigenvalues, a matched, balanced endpoint vector attains this factor geometrically.

Third, let $A_*$ be the strict derivative of the gradient at a stationary point, and suppose that it is self-adjoint and uniformly positive.  Set
\[
  c_*:=\frac{\kappa(A_*)-1}{\kappa(A_*)+1}.
\]
For either fixed pure BB rule, every $\gamma\in(c_*,1)$ is a uniform local envelope rate.  Every well-defined trajectory converging to the stationary point has error and gradient root factors at most $c_*$ and objective-gap root factor at most $c_*^2$.  Over the class of objectives with prescribed distinct derivative endpoints $m_*<M_*$, these constants are sharp: matched endpoint trajectories for quadratic and $C^\infty$ genuinely nonquadratic examples in $\R^2$ attain them.

The finite-dimensional proof uses a two-step dynamics on pairs of probability distributions and a coboundary endpoint certificate.  Periodic-orbit bounds pass first to invariant measures through Poincar\'e recurrence and then to arbitrary trajectories through empirical measures.  Scalar spectral measures and shrinking endpoint bands replace coordinates in Hilbert space, while the nonlinear argument combines a uniform frozen-quadratic envelope with finite-horizon comparison.  For background on the coboundary viewpoint, see \cite{Jenkinson2019}.

\Crefrange{sec:finite-setting}{sec:unique} develop the finite-dimensional theory.  \Cref{sec:hilbert,app:hilbert-proofs} treat Hilbert spaces, and \cref{sec:nonlinear} proves the nonlinear result.

\section{Finite-dimensional setting and main results}\label{sec:finite-setting}

For a nonzero initial gradient, extend the gradient sequence by $g_k=0$ after finite termination and define
\begin{equation}\label{eq:rho-def}
  \rho_H(g_0,\alpha_0)
  :=\limsup_{k\to\infty}
  \left(\frac{\norm{g_k}}{\norm{g_0}}\right)^{1/k}.
\end{equation}
We use the matched initialization
\begin{equation}\label{eq:matched-init-BB1}
  \alpha_0^{\rm mat}=\frac{g_0^Tg_0}{g_0^THg_0}
\end{equation}
and, for BB1 under this matched initialization, call $\gamma\in(0,1)$ a \emph{uniform envelope rate} if there exists $C=C(H,\gamma)<\infty$ such that
\begin{equation}\label{eq:uniform-rate-definition}
  \norm{g_k}\le C\gamma^k\norm{g_0}
  \qquad\text{for every }g_0\ne0\text{ and every }k\ge0.
\end{equation}

For a function $\Psi:\spec(H)\to(0,\infty)$, the associated delayed weighted Rayleigh rule is
\begin{equation}\label{eq:weighted-delayed-intro}
  \alpha_k^{\Psi}
  =\frac{g_{k-1}^T\Psi(H)g_{k-1}}
         {g_{k-1}^T\Psi(H)Hg_{k-1}},
  \qquad k\ge1.
\end{equation}

Let $\{P_\lambda:\lambda\in\spec(H)\}$ be the orthogonal spectral projections and define the initially active spectrum
\begin{equation}\label{eq:active-spectrum-general}
  \Lambda(g_0)=\{\lambda\in\spec(H):P_\lambda g_0\ne0\}.
\end{equation}
If $\Lambda(g_0)$ contains at least two points, set
\begin{equation}\label{eq:active-c}
  a_0=\min\Lambda(g_0),\qquad
  b_0=\max\Lambda(g_0),\qquad
  c(g_0)=\frac{b_0-a_0}{b_0+a_0};
\end{equation}
if it is a singleton, set $c(g_0)=0$.  Finally, write
\begin{equation}\label{eq:cH-def}
  \cH=
  \frac{\lambda_{\max}(H)-\lambda_{\min}(H)}
       {\lambda_{\max}(H)+\lambda_{\min}(H)}
  =\frac{\kappa(H)-1}{\kappa(H)+1}.
\end{equation}

If $\spec(H)=\{\lambda\}$, then $H=\lambda I$.  An arbitrary positive first step either terminates immediately or leaves the gradient in the same eigenspace, after which the next BB1 or BB2 step is $1/\lambda$ and terminates.  Thus every trajectory stops in at most two updates and $\cH=0$.  All logarithmic normalized dynamics below are therefore stated only on supports containing at least two spectral points.

\begin{theorem}\label{thm:main}
Let the BB1 sequence be generated by \eqref{eq:BB1-general} with arbitrary $\alpha_0>0$.  Then
\begin{equation}\label{eq:active-main}
  \rho_H(g_0,\alpha_0)\le c(g_0)\le\cH
  \qquad(g_0\ne0).
\end{equation}
Moreover,
\begin{equation}\label{eq:global-main}
  \sup_{g_0\ne0,\,\alpha_0>0}\rho_H(g_0,\alpha_0)=\cH.
\end{equation}
If $H$ has at least two distinct eigenvalues, equality is attained under the matched initialization: take $g_0$ supported on the eigenspaces associated with
$a=\lambda_{\min}(H)$ and $b=\lambda_{\max}(H)$ and impose
\begin{equation}\label{eq:equal-endpoint-energy-intro}
  \norm{P_ag_0}^2=\norm{P_bg_0}^2.
\end{equation}
Then
\begin{equation}\label{eq:exact-geometric-intro}
  \norm{g_k}=\cH^k\norm{g_0}\qquad(k\ge0).
\end{equation}
The singleton-spectrum case gives both sides zero by finite termination.
\end{theorem}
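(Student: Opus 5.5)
The plan is to pass to spectral coordinates and reduce everything to a scalar recurrence driven by Rayleigh quotients. Write $g_k=\sum_{\lambda}P_\lambda g_k$. Since $g_{k+1}=(I-\alpha_kH)g_k$, we have $P_\lambda g_{k+1}=(1-\alpha_k\lambda)P_\lambda g_k$. So the active set can only shrink along the trajectory: a component that is annihilated never reappears. Consequently every later active interval lies inside $[a_0,b_0]$. The map $[a,b]\mapsto (b-a)/(b+a)$ is monotone under inclusion, so $c(g_0)\le\cH$ is immediate. If the support collapses to a singleton, the trajectory terminates after one more step. Hence we may assume that $\Lambda(g_k)$ keeps at least two points, all inside $[a,b]:=[a_0,b_0]$.

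Let $\mu_k$ be the probability measure on $[a,b]$ with weights $\norm{P_\lambda g_k}^2/\norm{g_k}^2$, and let $m_k=\int\lambda\,d\mu_k$, so that $\alpha_{k+1}=1/m_k$. Then
\[
  r_{k+1}:=\frac{\norm{g_{k+1}}^2}{\norm{g_k}^2}
  =\int\Bigl(1-\frac{\lambda}{m_{k-1}}\Bigr)^2d\mu_k
  =\frac{(m_{k-1}-m_k)^2+\operatorname{Var}(\mu_k)}{m_{k-1}^2}
  \qquad(k\ge1).
\]
The first step contributes only the single bounded factor $r_1$ (it uses the arbitrary $\alpha_0$), so it does not affect $\rho_H$. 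For a measure on $[a,b]$ we have $\operatorname{Var}(\mu)\le (b-m)(m-a)$. Hence $r_{k+1}\le f(m_{k-1},m_k)$, where
\[
  f(x,y)=\frac{x^2-2xy+(a+b)y-ab}{x^2},\qquad x,y\in[a,b].
\]
This relaxation discards the memory structure completely. It therefore suffices to prove the following purely scalar claim: for every sequence $(x_k)\subset[a,b]$,
\[
  \limsup_{n\to\infty}\frac1n\sum_{k<n}\log f(x_k,x_{k+1})\le 2\log c,
  \qquad c=\frac{b-a}{b+a}.
\]
As a sanity check, on the diagonal $f(m,m)=(b-m)(m-a)/m^2$. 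This is maximized at $m=2ab/(a+b)$, where it equals exactly $c^2$, so the claimed constant is at least consistent.

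The scalar claim is the main obstacle. I would attack it with a coboundary certificate: find a continuous $h$ on $[a,b]$ with
\[
  \log f(x,y)\le 2\log c+h(x)-h(y)\qquad\text{for all }x,y\in[a,b].
\]
Summing this inequality telescopes, which gives the bound with an $O(1)$ error. That error disappears after taking $n$-th roots. Because $f$ is affine in $y$ and $x^2f(x,y)$ is quadratic in $x$, I would first try $h(x)=\beta\log x+\delta\log(\text{an affine function of }x)$. I would tune the exponents so that equality holds at the diagonal maximizer $x=y=2ab/(a+b)$ and along the two-cycle $a\leftrightarrow b$ produced by the endpoint orbit.

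If no closed form appears, I would fall back on ergodic optimization on the compact space $[a,b]^2$ with the shift. First I would bound the average of $\log f$ over periodic sequences by a direct convexity argument. Next I would pass to invariant measures, using Poincaré recurrence to approximate them. Finally I would pass to arbitrary trajectories by taking weak limits of empirical measures, using the upper semicontinuity of $\int\log f$.

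For sharpness, take $g_0$ supported on $P_a,P_b$ with equal energies, where now $a=\lambda_{\min}$ and $b=\lambda_{\max}$. Then $m_0=(a+b)/2$, and the matched step gives $|1-2a/(a+b)|=|1-2b/(a+b)|=\cH$. The two components are multiplied by factors of equal modulus, so the energies stay equal and $m_k\equiv(a+b)/2$. By induction, $\norm{g_k}=\cH^k\norm{g_0}$. Combining this with \eqref{eq:active-main} gives \eqref{eq:global-main}. In the singleton-spectrum case both sides are zero by the finite-termination remark preceding the theorem.
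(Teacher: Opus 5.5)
Your upper-bound argument fails at the scalar relaxation. The claim $\limsup\frac1n\sum_{k<n}\log f(x_k,x_{k+1})\le2\log c$ is false, so no coboundary and no ergodic fallback can prove it.

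Your sanity check contains a slip. At the harmonic mean $m^\sharp=2ab/(a+b)$ one has $b-m^\sharp=b(b-a)/(a+b)$ and $m^\sharp-a=a(b-a)/(a+b)$. Hence
\[
  f(m^\sharp,m^\sharp)=\frac{(b-m^\sharp)(m^\sharp-a)}{(m^\sharp)^2}
  =\frac{(b-a)^2}{4ab}=c^2\,\frac{(a+b)^2}{4ab}>c^2
  \qquad(a<b).
\]
So the constant sequence $x_k\equiv m^\sharp$ violates the claim. No function $h$ whatsoever can satisfy $\log f(x,y)\le2\log c+h(x)-h(y)$; set $x=y=m^\sharp$.

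Off the diagonal it is worse. Since $f(a,b)=(b-a)^2/a^2$ and $f(b,a)=(b-a)^2/b^2$, the period-two sequence $a,b,a,b,\dots$ gives per-step norm factor $(b-a)/\sqrt{ab}=(\kappa-1)/\sqrt\kappa$. This exceeds $1$ once $\kappa>(3+\sqrt5)/2$. These sequences are periodic orbits of the relaxed shift on $[a,b]^{\mathbb N}$, so ergodic optimization on that system also has maximal average strictly above $2\log c$.

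At $m=(a+b)/2$, the mean of the true extremal orbit, the diagonal value is exactly $c^2$, but nothing in the relaxation forces the means there. Note also that this extremal orbit has constant mean, not a two-cycle $a\leftrightarrow b$.

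What you discarded is exactly what makes the bound true. The measure $\mu_{k+1}$ is $\mu_k$ reweighted by $(1-\lambda/m_{k-1})^2$, so consecutive means and variances are not free. For example, the support cannot grow. And if $m_{k-1}=m_k=m^\sharp$ with $\mu_k$ two-point on $\{a,b\}$, the next mean is $(a^2+b^2)/(a+b)\ne m^\sharp$.

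The paper keeps the full state $\chi_k=(w_k,w_{k-1})\in\Delta_I\times\Delta_I$ and uses a transfer function of the endpoint masses,
\[
  h(w,z)=-\frac{b}{2(a+b)}\log w_{i_-}-\frac{a}{2(a+b)}\log w_{i_+}.
\]
This gives the exact identity $\log r_I=\log c-\mathcal D+h\circ T_I-h$. Here $\mathcal D\ge0$ by the weighted AM--GM inequality $\bigl(\frac{u-a}{u}\bigr)^{b/(a+b)}\bigl(\frac{b-u}{u}\bigr)^{a/(a+b)}\le c$.

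This $h$ is unbounded above, because endpoint masses can decay. So it cannot be telescoped along a single trajectory with an $O(1)$ error. The paper therefore runs essentially your fallback scaffolding on the unrelaxed two-step system: periodic orbits, then invariant measures via Poincar\'e recurrence, then trajectories via empirical measures and a semi-uniform ergodic principle. On that system the maximal invariant average is exactly $\log c$.

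Your active-spectrum reduction, the formula for $r_{k+1}$, and the sharpness argument via the balanced endpoint orbit are correct.
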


\section{Spectral conjugacy and active-spectrum reduction}\label{sec:conjugacy}
The weighted delayed family \eqref{eq:weighted-delayed-intro} is spectrally conjugate to BB1.  It contains the positive-weight rules studied in \cite{HuangDaiLiuZhang2022Acceleration}, with BB1 and BB2 corresponding to $\Psi=I$ and $\Psi=H$.  The conjugacy below shows that a fixed positive spectral weight does not change the exact asymptotic root-factor problem.

\begin{lemma}\label{lem:spectral-conjugacy}
Let $\Psi:\spec(H)\to(0,\infty)$, let $g_0\ne0$, and set
\[
  S=\Psi(H)^{1/2},\qquad \widetilde g_k=Sg_k.
\]
Suppose that $\{g_k\}$ uses an arbitrary first step $\alpha_0>0$ and the delayed weighted rule \eqref{eq:weighted-delayed-intro} for $k\ge1$. Then
\begin{equation}\label{eq:conjugate-update}
\begin{aligned}
  \widetilde g_1
  &= (I-\alpha_0H)\widetilde g_0,
  &
  \widetilde g_{k+1}
  &= (I-\alpha_k^\Psi H)\widetilde g_k,
  &
  \alpha_k^\Psi
  &=\frac{\widetilde g_{k-1}^T\widetilde g_{k-1}}
          {\widetilde g_{k-1}^TH\widetilde g_{k-1}}
  &(k\ge1).
\end{aligned}
\end{equation}
Thus $\{\widetilde g_k\}$ is a BB1 gradient sequence for the same Hessian and with the same first step $\alpha_0$. Moreover,
\begin{equation}\label{eq:active-spectrum-conjugacy}
  \Lambda(\widetilde g_0)=\Lambda(g_0)
\end{equation}
and
\begin{equation}\label{eq:root-conjugacy}
  \limsup_{k\to\infty}
  \left(\frac{\norm{g_k}}{\norm{g_0}}\right)^{1/k}
  =
  \limsup_{k\to\infty}
  \left(\frac{\norm{\widetilde g_k}}{\norm{\widetilde g_0}}\right)^{1/k}.
\end{equation}
\end{lemma}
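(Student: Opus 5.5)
The plan is to exploit the fact that $S=\Psi(H)^{1/2}$ is a function of $H$. Hence $S$ commutes with $H$ and with every polynomial in $H$, and it is invertible.

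\emph{Update formula.} Since $\Psi>0$ on the finite set $\spec(H)$, the operator $S$ is symmetric positive definite, with $\norm{S}\le \max\Psi^{1/2}$ and $\norm{S^{-1}}\le(\min\Psi)^{-1/2}$. The update $g_{k+1}=(I-\alpha_kH)g_k$ holds for every $k\ge0$, with $\alpha_k=\alpha_0$ at $k=0$ and $\alpha_k=\alpha_k^\Psi$ afterwards. Multiplying on the left by $S$ and commuting $S$ past $I-\alpha_kH$ gives $\widetilde g_{k+1}=(I-\alpha_kH)\widetilde g_k$. For the step size, I will use $\Psi(H)=S^TS=S^2$ and $SH=HS$ to rewrite
\[
  g_{k-1}^T\Psi(H)g_{k-1}=\widetilde g_{k-1}^T\widetilde g_{k-1},
  \qquad
  g_{k-1}^T\Psi(H)Hg_{k-1}=(Sg_{k-1})^TH(Sg_{k-1})=\widetilde g_{k-1}^TH\widetilde g_{k-1}.
\]
This yields \eqref{eq:conjugate-update}, so $\{\widetilde g_k\}$ is exactly the BB1 recursion for $H$ with first step $\alpha_0$.

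\emph{Finite termination.} The one point needing care is the convention about finite termination. The weighted denominator vanishes precisely when $g_{k-1}=0$, since $\Psi(H)H\succ0$. Because $S$ is invertible, $g_{k-1}=0$ if and only if $\widetilde g_{k-1}=0$. Therefore both sequences terminate at the same index, and the zero extensions correspond under $S$.

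\emph{Active spectrum.} For \eqref{eq:active-spectrum-conjugacy}, note that $P_\lambda S=\Psi(\lambda)^{1/2}P_\lambda$, because $S=\sum_\mu\Psi(\mu)^{1/2}P_\mu$. Hence $P_\lambda\widetilde g_0=\Psi(\lambda)^{1/2}P_\lambda g_0$, which is nonzero exactly when $P_\lambda g_0\ne0$.

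\emph{Root factors.} For \eqref{eq:root-conjugacy}, set $m=(\min\Psi)^{1/2}$ and $M=(\max\Psi)^{1/2}$. Then $m\norm{g_k}\le\norm{\widetilde g_k}\le M\norm{g_k}$ for all $k$. Consequently the ratios $\norm{\widetilde g_k}/\norm{\widetilde g_0}$ and $\norm{g_k}/\norm{g_0}$ differ by factors lying in $[m/M,\,M/m]$. Taking $k$-th roots, these constant factors tend to $1$, so the two $\limsup$'s coincide. This also covers sequences that are eventually zero, where both sides equal $0$.

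No step is a genuine obstacle. The only subtlety is checking that the termination convention and the zero extension are respected by the conjugacy, and invertibility of $S$ settles that.
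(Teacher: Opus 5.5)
Your proposal is correct and follows the paper's proof essentially step for step: $S$ commutes with $H$, the identity $S^2=\Psi(H)$ converts the weighted quotient into the BB1 quotient of $\widetilde g_{k-1}$, invertibility of $S$ preserves the active spectrum, and the norm-equivalence factors in $[m/M,\,M/m]$ (the paper's $\kappa_S^{-1}$ and $\kappa_S$) disappear after taking $k$th roots. Your explicit check that both sequences terminate at the same index, so that the zero extensions correspond under $S$, is a small detail the paper leaves implicit.
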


\begin{proof}
Because $S$ and $H$ are functions of the same symmetric matrix, they commute. Hence the first update satisfies
\[
  \widetilde g_1=S(I-\alpha_0H)g_0=(I-\alpha_0H)\widetilde g_0,
\]
and, for every $k\ge1$,
\[
  \widetilde g_{k+1}
  =S(I-\alpha_k^\Psi H)g_k
  =(I-\alpha_k^\Psi H)\widetilde g_k.
\]
The quotient in \eqref{eq:conjugate-update} follows from $S^2=\Psi(H)$. Positivity of $\Psi$ on the spectrum makes $S$ invertible and preserves every zero or nonzero spectral projection, proving \eqref{eq:active-spectrum-conjugacy}. Finally, with $\kappa_S=\norm{S}\norm{S^{-1}}$,
\[
  \kappa_S^{-1}
  \frac{\norm{g_k}}{\norm{g_0}}
  \le
  \frac{\norm{\widetilde g_k}}{\norm{\widetilde g_0}}
  \le
  \kappa_S
  \frac{\norm{g_k}}{\norm{g_0}}.
\]
The fixed equivalence constant disappears after taking $k$th roots.
\end{proof}

For a weighted sequence with $g_0\ne0$, write
\begin{equation}\label{eq:weighted-rho-def}
  \rho_{H,\Psi}(g_0,\alpha_0)
  :=\limsup_{k\to\infty}
  \left(\frac{\norm{g_k}}{\norm{g_0}}\right)^{1/k}.
\end{equation}

\begin{corollary}\label{cor:weighted-main}
Let $\Psi:\spec(H)\to(0,\infty)$, let the weighted delayed sequence use \eqref{eq:weighted-delayed-intro} for $k\ge1$, and let $\alpha_0>0$ be arbitrary. Then, for every $g_0\ne0$,
\[
  \rho_{H,\Psi}(g_0,\alpha_0)\le c(g_0)\le\cH,
  \qquad
  \sup_{g_0\ne0,\,\alpha_0>0}\rho_{H,\Psi}(g_0,\alpha_0)=\cH.
\]
If $H$ has at least two distinct eigenvalues, then under the matched weighted initialization
\begin{equation}\label{eq:matched-weighted-init}
  \alpha_0^{\Psi,\rm mat}
  =\frac{g_0^T\Psi(H)g_0}{g_0^T\Psi(H)Hg_0},
\end{equation}
the global factor is attained by a two-endpoint gradient satisfying
\begin{equation}\label{eq:weighted-balance}
  \Psi(a)\norm{P_ag_0}^2
  =\Psi(b)\norm{P_bg_0}^2,
  \qquad
  a=\lambda_{\min}(H),\quad b=\lambda_{\max}(H).
\end{equation}
In particular, $\Psi(t)=t$ gives BB2, for which the balance condition is
\[
  a\norm{P_ag_0}^2=b\norm{P_bg_0}^2.
\]
\end{corollary}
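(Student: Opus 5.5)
The corollary reduces directly to \cref{thm:main} through the spectral conjugacy of \cref{lem:spectral-conjugacy}. Given a weighted delayed sequence $\{g_k\}$ with first step $\alpha_0>0$, set $S=\Psi(H)^{1/2}$ and $\widetilde g_k=Sg_k$. By the lemma, $\{\widetilde g_k\}$ is a BB1 gradient sequence for the same $H$ with the same first step. Moreover, $\Lambda(\widetilde g_0)=\Lambda(g_0)$, so $c(\widetilde g_0)=c(g_0)$, and \eqref{eq:root-conjugacy} gives $\rho_{H,\Psi}(g_0,\alpha_0)=\rho_H(\widetilde g_0,\alpha_0)$. Applying \eqref{eq:active-main} to $\widetilde g_0$ yields $\rho_{H,\Psi}(g_0,\alpha_0)\le c(g_0)\le\cH$. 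Finite termination is compatible with this: $S$ is invertible, so $g_k=0$ exactly when $\widetilde g_k=0$, and both sequences are extended by zero in the same way. The singleton-spectrum case is immediate in the same way.

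For the supremum, the map $g_0\mapsto Sg_0$ is a bijection of $\R^d\setminus\{0\}$, and the conjugacy preserves $\alpha_0$. Hence
\[
\sup_{g_0\ne0,\alpha_0>0}\rho_{H,\Psi}=\sup_{\widetilde g_0\ne0,\alpha_0>0}\rho_H=\cH
\]
by \eqref{eq:global-main}.

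For attainment, take $g_0$ supported on $P_a\R^d\oplus P_b\R^d$ and satisfying \eqref{eq:weighted-balance}. Then $\widetilde g_0=Sg_0$ has the same support. Since $\norm{P_\lambda\widetilde g_0}^2=\Psi(\lambda)\norm{P_\lambda g_0}^2$, condition \eqref{eq:weighted-balance} is exactly \eqref{eq:equal-endpoint-energy-intro} for $\widetilde g_0$. The matched weighted step also transforms correctly:
\[
\alpha_0^{\Psi,\rm mat}=\frac{g_0^TS^2g_0}{g_0^TS^2Hg_0}=\frac{\widetilde g_0^T\widetilde g_0}{\widetilde g_0^TH\widetilde g_0},
\]
using $SH=HS$. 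This is the BB1 matched initialization \eqref{eq:matched-init-BB1} for $\widetilde g_0$. \Cref{thm:main} then gives $\norm{\widetilde g_k}=\cH^k\norm{\widetilde g_0}$.

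It remains to transfer this back to $g_k$. Every $\widetilde g_k$ lies in the two endpoint eigenspaces, where $S$ acts as $\sqrt{\Psi(a)}$ and $\sqrt{\Psi(b)}$. Because each update is a polynomial in $H$, the relative energy split between the two eigenspaces evolves the same way in $g_k$ as in $\widetilde g_k$. In the exact orbit the two endpoint energies of $\widetilde g_k$ remain balanced; I would check this from the two-point computation in the proof of \cref{thm:main}, where the factors $|1-\alpha a|$ and $|1-\alpha b|$ both equal $\cH$ when $\alpha=2/(a+b)$. Then $\norm{P_\lambda g_k}=\cH^k\norm{P_\lambda g_0}$ for $\lambda\in\{a,b\}$, so $\norm{g_k}=\cH^k\norm{g_0}$ exactly. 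Even without that step, \eqref{eq:root-conjugacy} alone yields $\rho_{H,\Psi}(g_0,\alpha_0^{\Psi,\rm mat})=\cH$, which is the attainment claimed.

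Setting $\Psi(t)=t$ recovers BB2, since $\alpha_k^\Psi=g_{k-1}^THg_{k-1}/g_{k-1}^TH^2g_{k-1}$, and the balance condition becomes $a\norm{P_ag_0}^2=b\norm{P_bg_0}^2$. The only step needing care is matching the weighted matched step and the balance condition to their BB1 counterparts under $S$; everything else is a direct transfer.
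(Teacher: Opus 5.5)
Your proposal is correct and follows the paper's proof. The paper likewise conjugates by $S=\Psi(H)^{1/2}$ via \cref{lem:spectral-conjugacy}, applies \cref{thm:main} to $\widetilde g_k$, and notes that the matched weighted step and the balance \eqref{eq:weighted-balance} become the BB1 matched step and equal endpoint energies for $\widetilde g_0$. Your extra check that $\norm{g_k}=\cH^k\norm{g_0}$ holds exactly is valid: the common step is $2/(a+b)$ at every iteration, as in \cref{prop:endpoint-orbit}. It is, however, more than the root-factor attainment claimed in the corollary requires.
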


\begin{proof}
Apply \cref{thm:main} to the conjugate BB1 sequence from \cref{lem:spectral-conjugacy}. The matched initialization and balance condition become the ordinary BB1 matched initialization and equal endpoint energies for $\widetilde g_0$.
\end{proof}

Related renormalizations appear in current-gradient measure dynamics \cite{PronzatoWynnZhigljavsky2006,HuangDaiLiuZhang2022Asymptotic}.  Since the conjugacy reduces every fixed positive weighted delayed rule to BB1, we therefore analyze BB1 in the remainder of the finite-dimensional argument and transfer the conclusions through \cref{cor:weighted-main}.

We next reduce BB1 to distinct active eigenvalues.

\begin{lemma}\label{lem:active-reduction}
Every BB1 trajectory with $g_0\ne0$ reduces, without changing its asymptotic root factor, to a diagonal model
\begin{equation}\label{eq:diag-model}
  q(x)=\frac12 x^THx,
  \qquad
  H=\diag(\lambda_1,\ldots,\lambda_n),
  \qquad 0<\lambda_1<\cdots<\lambda_n,
\end{equation}
with one coordinate for each distinct active eigenspace.  Vanishing coordinates are removed by passage to invariant faces.
\end{lemma}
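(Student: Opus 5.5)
We reduce in two steps: an orthogonal change of variables, then a restriction to the active spectral subspace. Orthogonal diagonalization and the translation $x\mapsto x-x_*$ preserve the BB1 iteration. Write $H=Q\Lambda Q^T$ with $Q$ orthogonal and set $y=Q^T(x-x_*)$. Then $\norm{g_k}$, $g_{k-1}^Tg_{k-1}$ and $g_{k-1}^THg_{k-1}$ are unchanged. Hence the step sizes $\alpha_k$, the gradient norms and $\rho_H(g_0,\alpha_0)$ coincide for the model $q(y)=\frac12y^T\Lambda y$.

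We then group coordinates by eigenspace. Let $\mu_1<\cdots<\mu_m$ be the distinct eigenvalues of $H$. For each $j$, the update $g_{k+1}=(I-\alpha_kH)g_k$ acts on the eigenspace block $P_{\mu_j}g_k$ as the scalar factor $1-\alpha_k\mu_j$. Therefore
\[
  P_{\mu_j}g_k=\prod_{i<k}(1-\alpha_i\mu_j)\,P_{\mu_j}g_0 ,
\]
and the block keeps a fixed direction. It is therefore described completely by the scalar $\eta_{j,k}:=\pm\norm{P_{\mu_j}g_k}$, with the sign fixed relative to $P_{\mu_j}g_0$. Both quantities that enter BB1 depend only on these scalars:
\[
  \norm{g_k}^2=\sum_j\eta_{j,k}^2,\qquad g_k^THg_k=\sum_j\mu_j\eta_{j,k}^2 .
\]
So the vector $(\eta_{1,k},\dots,\eta_{m,k})$ is itself a BB1 gradient sequence for $\diag(\mu_1,\ldots,\mu_m)$. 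It has the same first step $\alpha_0$ and the same norms, so it has the same $\rho$.

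Next we remove the inactive eigenvalues. If $P_{\mu_j}g_0=0$, then $\eta_{j,k}=0$ for all $k$ by the product formula. The coordinate face $\{\eta_j=0\}$ is invariant under every map $I-\alpha H$, and the zero coordinate contributes nothing to either sum above. Deleting all such coordinates leaves the indices with $\mu_j\in\Lambda(g_0)$, relabelled $\lambda_1<\cdots<\lambda_n$. The sequence, the steps and the norms are unchanged, which gives \eqref{eq:diag-model}, and the reduced model has $\spec=\Lambda(g_0)$.

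The one point needing care is finite termination and later vanishing. A coordinate can become zero at some step $k\ge1$ when $\alpha_{k-1}=1/\lambda_j$. Once it is zero it stays zero, by the same invariance of the face. We keep such a coordinate in the model rather than deleting it, because it was active initially; the reduction is defined at time $0$ and so is unaffected. If every coordinate vanishes, the convention $g_k=0$ after termination applies identically in both models, so the quantities in \eqref{eq:rho-def} agree, with $\limsup=0$. Apart from this, each step is an exact identity, and I expect no real obstacle beyond stating precisely that the $\limsup$ is preserved, which holds because the norms agree for every $k$.
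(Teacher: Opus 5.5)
Your translation, orthogonal diagonalization, collapse of each eigenspace block to the scalar $\eta_{j,k}$, and deletion of the eigenspaces with $P_{\mu_j}g_0=0$ via invariant coordinate faces are the paper's steps, and they are correct.

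There is one gap, and it concerns the lemma's last sentence. You explicitly keep coordinates that become zero at some step $k\ge1$ (when $\alpha_{k-1}=1/\lambda_j$). Your reduced model can therefore still carry vanishing coordinates, whereas the lemma asserts their removal, and the paper's proof carries it out. The missing step is short:

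\begin{itemize}
\item Because $P_{\lambda_i}g_{k+1}=(1-\alpha_k\lambda_i)P_{\lambda_i}g_k$, a zero component is never recreated, so only finitely many components can disappear.
\item If the trajectory terminates, its root factor is $0$ and there is nothing to prove. Otherwise there is a last disappearance index $K$.
\item After one further update, the two-step state $(w_{K+1},w_K)$ has zero weight on every lost coordinate in both entries. From then on the orbit lies in the corresponding invariant lower-dimensional face, so you can delete those coordinates and restart the model there.
\item Deleting them changes no subsequent norm. Discarding the finite prefix $g_0,\dots,g_K$ does not change $\limsup_k(\norm{g_k}/\norm{g_0})^{1/k}$, because the fixed index shift and the fixed factor $\norm{g_{K+1}}/\norm{g_0}$ disappear after taking $k$th roots.
\end{itemize}

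In the resulting model every retained coordinate is nonzero along the entire tail.

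Your version, exact for every $k$ with no discarded prefix, is still worth keeping, and it is what the later arguments actually rely on. The proof of \cref{thm:main} applies \cref{lem:uniform-orbit} on the closed face $X_{I_0}$ indexed by $\Lambda(g_0)$, which already tolerates coordinates that vanish later. \Cref{cor:uniform-envelope} needs a constant independent of $g_0$, which a discarded prefix of trajectory-dependent length would not by itself provide. So the omission is harmless downstream. As a proof of the lemma as stated, however, you need to add the deletion-after-last-vanishing step above.
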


\begin{proof}
Translate $x_*$ to the origin and diagonalize $H$ orthogonally. Orthogonal transformations preserve Euclidean gradient norms and the BB1 Rayleigh quotient. On an eigenspace associated with $\lambda$, the update multiplies the entire projected gradient by the same scalar, so only its squared norm matters. The component recurrence is
\[
  P_{\lambda_i}g_{k+1}=(1-\alpha_k\lambda_i)P_{\lambda_i}g_k.
\]
Thus a zero component can never be recreated.  Only finitely many components can disappear.  After the last such event and at most one further state update, deleting the vanished coordinates gives an invariant lower-dimensional face and leaves all subsequent norms unchanged; discarding the preceding finite segment does not alter the limsup in \eqref{eq:rho-def}.
\end{proof}

\section{The normalized two-step dynamics}\label{sec:reduction}
This section gives the normalized reduction used below.  Fix an index set $I$ corresponding to at least two distinct active eigenvalues.
For every $J\subseteq I$ with $|J|\ge2$, write
\begin{equation}\label{eq:endpoint-notation}
\begin{gathered}
  i_-(J)=\min J,\qquad i_+(J)=\max J,\\
  a_J=\lambda_{i_-(J)},\qquad b_J=\lambda_{i_+(J)},\qquad
  m_J=\frac{a_J+b_J}{2},\qquad
  c_J=\frac{b_J-a_J}{b_J+a_J}.
\end{gathered}
\end{equation}
For $J=I$ we suppress the argument. Let $e_i$ denote the $i$th vertex of the probability simplex and define
\begin{equation}\label{eq:w-star-def}
  w_J^*=\frac12e_{i_-(J)}+\frac12e_{i_+(J)}.
\end{equation}
If $J\subseteq I$ and $|J|\ge2$, then $c_J\le c_I$, with equality only when $a_J=a_I$ and $b_J=b_I$.  Indeed, $(b-a)/(b+a)$ is strictly decreasing in $a$ and strictly increasing in $b$ for $0<a<b$, while $[a_J,b_J]\subseteq[a_I,b_I]$.

For $k\ge0$, define the squared spectral energies and their normalization by
\begin{equation}\label{eq:energy-def}
  E_i^k=(g_{k,i})^2,
  \qquad
  w_i^k=\frac{E_i^k}{\sum_{j\in I}E_j^k},
  \qquad w_k\in\Delta_I,
\end{equation}
whenever $g_k\ne0$, where
\[
  \Delta_I=\left\{w\in\R^I:w_i\ge0,\ \sum_{i\in I}w_i=1\right\}.
\]
Set
\begin{equation}\label{eq:u-def}
  u(w)=\sum_{i\in I}\lambda_iw_i
\end{equation}
and, for $z\in\Delta_I$,
\begin{equation}\label{eq:phi-def}
  \phi_i(z)=\left(1-\frac{\lambda_i}{u(z)}\right)^2.
\end{equation}
For $k\ge1$, the BB1 stepsize is $\alpha_k=1/u(w_{k-1})$, and therefore
\begin{equation}\label{eq:energy-rec}
  E_i^{k+1}=\phi_i(w_{k-1})E_i^k.
\end{equation}
Under the matched initialization \eqref{eq:matched-init-BB1}, the same relation holds for $k=0$ if $w_{-1}=w_0$.

Let
\begin{equation}\label{eq:X-state}
  X_I=\Delta_I\times\Delta_I.
\end{equation}
For $\chi=(w,z)\in X_I$, define
\begin{equation}\label{eq:r-def}
  r_I(w,z)^2=\sum_{i\in I}\phi_i(z)w_i
  =\ip{\phi(z)}{w}.
\end{equation}
Whenever $r_I(w,z)>0$, set
\begin{equation}\label{eq:T-def}
  T_I(w,z)=
  \left(
    \frac{\phi(z)\odot w}{\ip{\phi(z)}{w}},\,w
  \right),
\end{equation}
where $\odot$ denotes componentwise multiplication. If $g_k\ne0$ and $g_{k+1}\ne0$, then
\begin{equation}\label{eq:state-and-cocycle}
  \chi_k=(w_k,w_{k-1}),
  \qquad \chi_{k+1}=T_I\chi_k,
  \qquad \norm{g_{k+1}}=r_I(\chi_k)\norm{g_k}.
\end{equation}
Consequently, on every nonterminal segment for which the normalized states
$\chi_\ell,\ldots,\chi_{m-1}$ are defined,
\begin{equation}\label{eq:norm-cocycle}
  \frac{\norm{g_m}}{\norm{g_\ell}}
  =\prod_{k=\ell}^{m-1}r_I(\chi_k),
  \qquad
  \log\frac{\norm{g_m}}{\norm{g_\ell}}
  =\sum_{k=\ell}^{m-1}\log r_I(\chi_k).
\end{equation}
For matched initialization, $\chi_0=(w_0,w_0)$ and \eqref{eq:state-and-cocycle} starts at $k=0$. For arbitrary $\alpha_0>0$, it starts at $k=1$ from the unrestricted state $\chi_1=(w_1,w_0)\in X_I$. The function $r_I^2$ is continuous and bounded on $X_I$. If $r_I(\chi_k)=0$, then $g_{k+1}=0$ and the original method terminates.

\section{Sharp endpoint orbit and the two-dimensional BB recurrence}\label{sec:lower}
We first establish the matching lower bound and then record the projective recurrence that explains the instability of the extremal state.

\begin{proposition}\label{prop:endpoint-orbit}
Let $a<b$ be two eigenvalues. Suppose that $g_0\ne0$ is supported on the corresponding eigenspaces and satisfies
\begin{equation}\label{eq:endpoint-balance-BB1}
  \norm{P_ag_0}^2=\norm{P_bg_0}^2.
\end{equation}
Under the matched initialization,
\begin{equation}\label{eq:exact-endpoint-factor}
  \norm{g_k}=\left(\frac{b-a}{b+a}\right)^k\norm{g_0}
  \qquad(k\ge0).
\end{equation}
Equivalently, the normalized state is fixed and assigns mass $1/2$ to each of $a$ and $b$ in both coordinates.
\end{proposition}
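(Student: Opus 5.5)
We argue by induction on $k$, using the normalized two-step dynamics of \cref{sec:reduction}. First we apply \cref{lem:active-reduction}: since $g_0$ is supported on the eigenspaces of $a$ and $b$, the active index set is $I=\{1,2\}$ with $\lambda_1=a<\lambda_2=b$. Only the squared energies $E_1^k=\norm{P_ag_k}^2$ and $E_2^k=\norm{P_bg_k}^2$ enter. The balance condition \eqref{eq:endpoint-balance-BB1} says precisely that $w_0=\tfrac12e_1+\tfrac12e_2=w_I^*$. Under the matched initialization, the relation \eqref{eq:energy-rec} holds at $k=0$ with $w_{-1}=w_0$, so the initial normalized state is $\chi_0=(w^*,w^*)$.

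The key step is to show that $(w^*,w^*)$ is a fixed point of $T_I$ with cocycle value $r_I(w^*,w^*)=(b-a)/(b+a)$.

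\emph{Stepsize.} We have $u(w^*)=m=(a+b)/2$.

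\emph{Contraction factors.} The definition \eqref{eq:phi-def} gives
\[
  \phi_1(w^*)=\Bigl(1-\tfrac{2a}{a+b}\Bigr)^2=\Bigl(\tfrac{b-a}{b+a}\Bigr)^2,
  \qquad
  \phi_2(w^*)=\Bigl(1-\tfrac{2b}{a+b}\Bigr)^2=\Bigl(\tfrac{a-b}{a+b}\Bigr)^2 .
\]
The two factors are equal; this symmetry is the whole point.

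\emph{Cocycle and fixed point.} By \eqref{eq:r-def}, $r_I(w^*,w^*)^2=\ip{\phi(w^*)}{w^*}=c_I^2$, which is positive because $a<b$. Hence $T_I$ is defined at this state. Since $\phi(w^*)$ is a constant vector, $\phi(w^*)\odot w^*/\ip{\phi(w^*)}{w^*}=w^*$, and so $T_I(w^*,w^*)=(w^*,w^*)$.

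Induction via \eqref{eq:state-and-cocycle} now gives $\chi_k=(w^*,w^*)$ and $\norm{g_{k+1}}=c_I\norm{g_k}$ for every $k$. Telescoping with \eqref{eq:norm-cocycle} yields \eqref{eq:exact-endpoint-factor}.

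It remains to check that the recursion never hits termination, i.e.\ $g_k\ne0$ for all $k$, so that the normalized states remain defined. This follows from $c_I>0$ and the inductive identity: each component energy is multiplied by the same positive factor $c_I^2$ at every step. Equivalently, one can verify directly that both components scale by the factors $\pm(b-a)/(b+a)$ under the step $\alpha_k=2/(a+b)$. The only potential subtlety is the $k=0$ step, and it is handled exactly by the matched choice $\alpha_0=g_0^Tg_0/g_0^THg_0=1/u(w_0)=2/(a+b)$, which is why matched initialization is assumed.

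I expect no genuine obstacle beyond this bookkeeping; the computation $\phi_1=\phi_2$ at the midpoint step is the heart of the argument. The final "equivalently" clause of the statement is immediate from the fixed-point identity.
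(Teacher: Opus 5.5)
Your proof is correct and is essentially the paper's argument. The paper computes $u(w_0)=(a+b)/2$, the matched step $2/(a+b)$ and the endpoint multipliers $\pm c$ with $c=(b-a)/(b+a)$, then writes the component orbit $P_ag_k=c^kP_ag_0$, $P_bg_k=(-1)^kc^kP_bg_0$ directly; you package the same computation as the fact that $(w^*,w^*)$ is a regular fixed point of $T_I$ with multiplier $c$, and you should state explicitly that balance together with $g_0\ne0$ makes both endpoint energies positive, which is what justifies $I=\{1,2\}$ and $w_0=w^*$.
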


\begin{proof}
The balance condition gives equal normalized endpoint energies, so $u(w_0)=(a+b)/2$. The matched stepsize is therefore $2/(a+b)$, and the endpoint multipliers are
\[
  1-\frac{2a}{a+b}=\frac{b-a}{b+a},
  \qquad
  1-\frac{2b}{a+b}=-\frac{b-a}{b+a}.
\]
With
$c_{a,b}=(b-a)/(b+a)$,
\begin{equation}\label{eq:endpoint-component-orbit}
  P_ag_k=c_{a,b}^kP_ag_0,
  \qquad
  P_bg_k=(-1)^kc_{a,b}^kP_bg_0.
\end{equation}
Thus the two endpoint components have equal squared energies at every step, while their signs alternate relative to one another, and the norm identity follows.
\end{proof}

Taking $a=\lambda_{\min}(H)$ and $b=\lambda_{\max}(H)$ proves the lower bound in \eqref{eq:global-main}.  This balanced orbit is the exceptional linear trajectory in the two-dimensional BB analysis \cite{BarzilaiBorwein1988} and appears in endpoint-supported form in \cite[Proposition~2]{LiSun2021}.  Under the conjugacy in \cref{lem:spectral-conjugacy}, equal endpoint energy becomes the weighted balance \eqref{eq:weighted-balance}.

For the two-eigenvalue model, write the gradient components as $\xi_k$ and $\eta_k$ and set
\begin{equation}\label{eq:projective-ratio-def}
  \zeta_k=\frac{\xi_k^2}{\eta_k^2}
\end{equation}
whenever both components are nonzero.

\begin{proposition}\label{prop:2d-instab}
As long as no component vanishes,
\begin{equation}\label{eq:qrec}
  \zeta_{k+1}=\frac{\zeta_k}{\zeta_{k-1}^2}\qquad(k\ge1).
\end{equation}
Under matched initialization, $\zeta_1=\zeta_0^{-1}$. The fixed solution $\zeta_k\equiv1$ is the unique matched orbit for which $\{\zeta_k\}$ remains in a compact subinterval of $(0,\infty)$. More precisely, if $\nu_k=\log\zeta_k$, then
\begin{equation}\label{eq:nu-rec}
  \nu_{k+1}=\nu_k-2\nu_{k-1},
  \qquad \nu_1=-\nu_0,
\end{equation}
and, unless $\nu_0=0$,
\begin{equation}\label{eq:nu-growth}
  \limsup_{k\to\infty}|\nu_k|^{1/k}=\sqrt2.
\end{equation}
\end{proposition}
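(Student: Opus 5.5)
Work directly with the two components. By \cref{lem:active-reduction} we may take $H=\diag(a,b)$ with gradient $(\xi_k,\eta_k)$. For $k\ge1$ the BB1 step is $\alpha_k=(\xi_{k-1}^2+\eta_{k-1}^2)/(a\xi_{k-1}^2+b\eta_{k-1}^2)$, and under matched initialization the same formula holds at $k=0$ with index $k$.

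\emph{Step 1: the recurrence.} Compute the two multipliers explicitly:
\[
1-\alpha_k a=\frac{(b-a)\eta_{k-1}^2}{a\xi_{k-1}^2+b\eta_{k-1}^2},\qquad
1-\alpha_k b=-\frac{(b-a)\xi_{k-1}^2}{a\xi_{k-1}^2+b\eta_{k-1}^2}.
\]
Their squared ratio is $\eta_{k-1}^4/\xi_{k-1}^4=\zeta_{k-1}^{-2}$. Then
\[
\zeta_{k+1}=\frac{(1-\alpha_ka)^2\xi_k^2}{(1-\alpha_kb)^2\eta_k^2}=\frac{\zeta_k}{\zeta_{k-1}^2},
\]
which is \eqref{eq:qrec}. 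It holds as long as no component vanishes; the common denominator cancels and $b\neq a$ keeps the numerators nonzero. In the matched case the same computation at $k=0$ uses $\zeta_0$ in place of $\zeta_{-1}$, which gives $\zeta_1=\zeta_0/\zeta_0^2=\zeta_0^{-1}$. Taking logarithms yields \eqref{eq:nu-rec}. Note that once all $\zeta_k$ are positive and finite, no component ever vanishes, since both multipliers are nonzero.

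\emph{Step 2: solving the linear recurrence.} The characteristic polynomial of $\nu_{k+1}=\nu_k-2\nu_{k-1}$ is $t^2-t+2$. Its roots are $t_\pm=(1\pm i\sqrt7)/2$, with $|t_\pm|=\sqrt2$ and argument $\theta=\arccos(1/(2\sqrt2))$. Hence
\[
\nu_k=2^{k/2}\bigl(A\cos k\theta+B\sin k\theta\bigr),
\]
with $(A,B)$ determined linearly by $(\nu_0,\nu_1)=(\nu_0,-\nu_0)$, and $(A,B)=0$ iff $\nu_0=0$. The limsup is then at most $\sqrt2$ trivially. For the lower bound $\limsup_k|\nu_k|^{1/k}\ge\sqrt2$, I need that $|A\cos k\theta+B\sin k\theta|$ is not eventually exponentially small. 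Write it as $R\cos(k\theta-\varphi)$ with $R>0$. Among any two consecutive indices $k,k+1$, the phases differ by $\theta\in(0,\pi)$. So $\max(|\cos(k\theta-\varphi)|,|\cos((k+1)\theta-\varphi)|)\ge\delta>0$, where $\delta$ depends only on $\theta$, because two angles differing by $\theta\not\equiv0 \pmod \pi$ cannot both be near $\pi/2 \pmod\pi$. This gives infinitely many $k$ with $|\nu_k|\ge R\delta\,2^{k/2}$, and \eqref{eq:nu-growth} follows.

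\emph{Step 3: uniqueness of the bounded orbit.} If $\zeta_k$ stays in a compact subinterval of $(0,\infty)$, then $\nu_k$ is bounded, which contradicts \eqref{eq:nu-growth} unless $\nu_0=0$. Conversely, $\nu_0=0$ gives $\nu_k\equiv0$, that is $\zeta_k\equiv1$. I should also note that an orbit on which a component vanishes is not of the type considered. The key point I expect to need care on is Step 2's nondegeneracy: one must exclude cancellation making $\nu_k$ grow slower than $2^{k/2}$. The consecutive-index argument, which uses $\theta\notin\pi\mathbb Z$ (true since $\cos\theta=1/(2\sqrt2)$), handles this cleanly.
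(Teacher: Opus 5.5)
Your proposal is correct and follows the paper's proof. The same explicit multiplier computation gives \eqref{eq:qrec} and $\zeta_1=\zeta_0^{-1}$, and the characteristic roots $(1\pm i\sqrt7)/2$, both of modulus $\sqrt2$, give \eqref{eq:nu-growth}. Your consecutive-index argument, which uses $\theta\notin\pi\mathbb Z$ to rule out cancellation, spells out a step the paper only asserts (``every nonzero real solution therefore satisfies'' the growth law).
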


\begin{proof}
If $r=\zeta_{k-1}$, then
\[
  u(w_{k-1})=\frac{ar+b}{r+1}.
\]
Consequently,
\[
  1-\frac{a}{u(w_{k-1})}=\frac{b-a}{ar+b},
  \qquad
  1-\frac{b}{u(w_{k-1})}=-\frac{(b-a)r}{ar+b}.
\]
Taking the ratio of the updated squared energies gives \eqref{eq:qrec}; the matched first step gives $\zeta_1=\zeta_0^{-1}$. Taking logarithms yields \eqref{eq:nu-rec}. Its characteristic roots are $(1\pm i\sqrt7)/2$, both of modulus $\sqrt2$. Every nonzero real solution therefore satisfies \eqref{eq:nu-growth}, while the matched constraint gives the zero solution exactly when $\nu_0=0$.
\end{proof}

Dai used the logarithmic recurrence above to separate the equal-energy linear orbit from the nondegenerate $R$-superlinear case and proved, in the latter case, that
\[
  \lim_{k\to\infty}\min\left\{
  \frac{\norm{g_{k+1}}}{\norm{g_k}},
  \frac{\norm{g_{k+2}}}{\norm{g_{k+1}}},
  \frac{\norm{g_{k+3}}}{\norm{g_{k+2}}}
  \right\}=0
\]
\cite[Sec.~2 and Theorem~2.1]{Dai2013}.  We later use this two-dimensional picture to calibrate the lower-bound mechanism.

\section{The endpoint inequality and a coboundary identity}\label{sec:endpoint}

The proof of the upper bound rests on the following elementary inequality.

\begin{lemma}\label{lem:endpoint-ineq}
Let $0<a<b$ and $u\in[a,b]$. Then
\begin{equation}\label{eq:endpoint-ineq}
  \left(\frac{u-a}{u}\right)^{b/(a+b)}
  \left(\frac{b-u}{u}\right)^{a/(a+b)}
  \le \frac{b-a}{b+a}.
\end{equation}
Equality holds if and only if $u=(a+b)/2$.
\end{lemma}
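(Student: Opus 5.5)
The plan is to take logarithms and apply weighted AM--GM. Set $p=b/(a+b)$ and $q=a/(a+b)$, so that $p+q=1$ and $p,q>0$. If $u=a$ or $u=b$, one factor vanishes and the left side of \eqref{eq:endpoint-ineq} is $0<(b-a)/(b+a)$; at these points equality fails, which is consistent with the claim since $u=(a+b)/2$ is interior. So we may take $u\in(a,b)$.

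On $(a,b)$ the natural route is weighted AM--GM with rescaled factors. Choose positive constants $s,t$ so that the arithmetic mean becomes independent of $u$, using
\[
  \left(\frac{u-a}{u}\right)^{p}\left(\frac{b-u}{u}\right)^{q}
  = s^{-p}t^{-q}\left(s\frac{u-a}{u}\right)^{p}\left(t\frac{b-u}{u}\right)^{q}
  \le s^{-p}t^{-q}\,\frac{ps(u-a)+qt(b-u)}{u}.
\]
The numerator is $u(ps-qt)-psa+qtb$. To make the ratio constant we want $qtb=psa$, that is $t\,ab=s\,ab$, so $s=t$. The bound then becomes $s^{1-1}(ps-qt)/s\cdot$, which gives $p-q=(b-a)/(b+a)$ once $s=t=1$.

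This is a direct argument: with $s=t=1$, AM--GM gives
\[
  \text{LHS}\le p\frac{u-a}{u}+q\frac{b-u}{u}
  = \frac{(p-q)u + (qb-pa)}{u}=p-q=\frac{b-a}{b+a},
\]
because $qb-pa=(ab-ab)/(a+b)=0$.

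Weighted AM--GM is an equality exactly when the two factors coincide, i.e. $(u-a)/u=(b-u)/u$, which means $u=(a+b)/2$. At that point the common value is $(b-a)/(b+a)$, which confirms the equality case. Combined with the strict inequality at the endpoints, this gives the "if and only if" statement.

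None of the steps is a serious obstacle. The only point that needs care is choosing the weights so that the $u$-independent terms cancel, and that cancellation is exactly what the exponents $b/(a+b)$ and $a/(a+b)$ are designed to produce.
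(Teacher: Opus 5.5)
Your proof is correct and is essentially the paper's argument: both use weighted AM--GM with weights $b/(a+b)$ and $a/(a+b)$. The paper substitutes $t=(u-a)/(b-u)$ and compares $1$ with $t$, while you apply it directly to $(u-a)/u$ and $(b-u)/u$, where $qb-pa=0$ makes the arithmetic mean identically $(b-a)/(b+a)$; the two forms differ only by rescaling with $(b-u)/u$. The exploratory paragraph with the constants $s,t$ contains a garbled formula and should be deleted, since the $s=t=1$ computation already gives the full proof, including the equality case.
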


\begin{proof}
At $u=a$ or $u=b$, the left-hand side is zero, so the assertion is immediate. Suppose $u\in(a,b)$ and set
\[
  t=\frac{u-a}{b-u}>0.
\]
Then $u=(a+tb)/(1+t)$ and
\[
  \frac{u-a}{u}=\frac{t(b-a)}{a+tb},
  \qquad
  \frac{b-u}{u}=\frac{b-a}{a+tb}.
\]
The left-hand side of \eqref{eq:endpoint-ineq} is therefore
\[
  \frac{b-a}{a+tb}\,t^{b/(a+b)}.
\]
Weighted AM--GM, applied to $1$ and $t$ with weights $a/(a+b)$ and $b/(a+b)$, gives
\[
  \frac{a+tb}{a+b}\ge t^{b/(a+b)}.
\]
This proves the inequality. Equality in weighted AM--GM holds exactly when $t=1$, which is equivalent to $u=(a+b)/2$.
\end{proof}

For $J\subseteq I$ with $|J|\ge2$, let
\begin{equation}\label{eq:PJ-def}
  P_J=\left\{(w,z)\in\Delta_J\times\Delta_J:
  w_i>0,\ z_i>0\text{ for all }i\in J\right\},
\end{equation}
where $\Delta_J$ is identified with the corresponding face of $\Delta_I$.  To make the transfer term meaningful on boundary states reached after an interior coordinate vanishes, also set
\begin{equation}\label{eq:h-domain}
  \mathcal H_J=
  \left\{(w,z)\in\Delta_J\times\Delta_J:
  w_{i_-(J)}>0,\ w_{i_+(J)}>0\right\}.
\end{equation}
If $\chi=(w,z)\in P_J$, then $u(z)\in(a_J,b_J)$ and both $\chi$ and $T_I\chi$ belong to $\mathcal H_J$.  Define the endpoint transfer function on $\mathcal H_J$ by
\begin{equation}\label{eq:h-def}
  h_J(w,z)=
  -\frac{b_J}{2(a_J+b_J)}\log w_{i_-(J)}
  -\frac{a_J}{2(a_J+b_J)}\log w_{i_+(J)}
\end{equation}
and define on $P_J$ the nonnegative defect
\begin{equation}\label{eq:defect-def}
  \mathcal D_J(w,z)=\log c_J
  -\frac{b_J}{a_J+b_J}\log\frac{u(z)-a_J}{u(z)}
  -\frac{a_J}{a_J+b_J}\log\frac{b_J-u(z)}{u(z)}.
\end{equation}

\begin{lemma}\label{lem:cohomology}
For every $\chi\in P_J$,
\begin{equation}\label{eq:cohomology}
  \log r_I(\chi)
  =\log c_J-\mathcal D_J(\chi)+h_J(T_I\chi)-h_J(\chi).
\end{equation}
Moreover, $\mathcal D_J\ge0$, and $\mathcal D_J(\chi)=0$ if and only if $u(z)=m_J$.
\end{lemma}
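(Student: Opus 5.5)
Since $\log c_J$ and $\mathcal D_J(\chi)$ both appear on the right of \eqref{eq:cohomology}, the identity reduces to computing $h_J(T_I\chi)-h_J(\chi)$ explicitly and comparing it with $\log r_I(\chi)$. The nonnegativity and equality statements then come straight from \cref{lem:endpoint-ineq}.

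For the coboundary, let $\chi=(w,z)\in P_J$ and write $T_I\chi=(w',w)$. By \eqref{eq:T-def},
\[
w'_i=\frac{\phi_i(z)w_i}{r_I(\chi)^2}.
\]
Both endpoint weights $w'_{i_\pm(J)}$ are positive, because $u(z)\in(a_J,b_J)$ and so $\phi_{i_\pm(J)}(z)>0$. Hence $T_I\chi\in\mathcal H_J$, as the paper notes. The function $h_J$ depends only on the first coordinate. Set $\beta_-=b_J/(2(a_J+b_J))$ and $\beta_+=a_J/(2(a_J+b_J))$. Then
\[
h_J(T_I\chi)-h_J(\chi)=-\beta_-\log\frac{w'_{i_-}}{w_{i_-}}-\beta_+\log\frac{w'_{i_+}}{w_{i_+}}
=-\beta_-\log\phi_{i_-}(z)-\beta_+\log\phi_{i_+}(z)+(\beta_-+\beta_+)\log r_I(\chi)^2 .
\]
The key point is that $\beta_-+\beta_+=1/2$, so the last term is exactly $\log r_I(\chi)$. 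Next substitute
\[
\phi_{i_-}(z)=\bigl((u(z)-a_J)/u(z)\bigr)^2,\qquad \phi_{i_+}(z)=\bigl((b_J-u(z))/u(z)\bigr)^2.
\]
The factor $2$ from the squares cancels the $1/2$ inside $\beta_\pm$, so
\[
h_J(T_I\chi)-h_J(\chi)=\log r_I(\chi)-\frac{b_J}{a_J+b_J}\log\frac{u(z)-a_J}{u(z)}-\frac{a_J}{a_J+b_J}\log\frac{b_J-u(z)}{u(z)} .
\]
By \eqref{eq:defect-def}, the last two terms equal $\mathcal D_J(\chi)-\log c_J$. Rearranging gives \eqref{eq:cohomology}.

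For the sign of the defect, apply \cref{lem:endpoint-ineq} with $a=a_J$, $b=b_J$, $u=u(z)\in(a_J,b_J)$ and take logarithms; this gives $\mathcal D_J\ge0$. The equality case of that lemma gives $\mathcal D_J(\chi)=0$ if and only if $u(z)=m_J$.

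The only delicate step is bookkeeping: every logarithm must be finite. This is why we work on $P_J$. There $z$ has full support on $J$, so $u(z)$ lies strictly between the endpoints. Also $w_{i_\pm}>0$, and $r_I(\chi)\ge \bigl(\phi_{i_-}(z)w_{i_-}\bigr)^{1/2}>0$. No other obstacle is expected.
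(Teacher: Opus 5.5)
Your proposal is correct and follows the paper's proof. Both compute $h_J(T_I\chi)-h_J(\chi)$ from the endpoint update $w^+_{i_\pm(J)}=\phi_{i_\pm(J)}(z)\,w_{i_\pm(J)}/r_I(\chi)^2$, use $a_J<u(z)<b_J$ and the fact that the two weights in $h_J$ sum to $1/2$ to isolate $\log r_I(\chi)$ and the defect terms, and obtain $\mathcal D_J\ge0$ with its equality case directly from \cref{lem:endpoint-ineq}.
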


\begin{proof}
The nonnegativity and the equality condition are the logarithmic form of \cref{lem:endpoint-ineq}. For the identity, abbreviate $a=a_J$, $b=b_J$, $u=u(z)$, and $R=r_I(w,z)^2$. If $T_I\chi=(w^+,w)$, then
\[
  w^+_{i_-(J)}=\frac{(1-a/u)^2w_{i_-(J)}}{R},
  \qquad
  w^+_{i_+(J)}=\frac{(1-b/u)^2w_{i_+(J)}}{R}.
\]
Since $a<u<b$,
\[
  |1-a/u|=\frac{u-a}{u},
  \qquad
  |1-b/u|=\frac{b-u}{u}.
\]
Substitution into \eqref{eq:h-def} gives
\begin{align*}
  h_J(T_I\chi)-h_J(\chi)
  &=-\frac{b}{a+b}\log\frac{u-a}{u}
    -\frac{a}{a+b}\log\frac{b-u}{u}
    +\frac12\log R\\
  &=-\frac{b}{a+b}\log\frac{u-a}{u}
    -\frac{a}{a+b}\log\frac{b-u}{u}
    +\log r_I(\chi).
\end{align*}
Rearranging and using \eqref{eq:defect-def} proves \eqref{eq:cohomology}.
\end{proof}

Equation \eqref{eq:cohomology} decomposes the one-step log growth rate into a critical slope $\log c_J$, a nonnegative defect $\mathcal D_J$, and a boundary term $h_J\circ T_I-h_J$ that telescopes along an orbit. All subsequent work extracts the sharp constant from this single identity.

\section{An elementary bound for exact periodic orbits}\label{sec:periodic}

Before the general argument we treat the most transparent case, where the coboundary telescopes over one period. 

Assume $|I|\ge2$.  An exact $p$-periodic orbit is a sequence
$\chi^{(0)},\ldots,\chi^{(p-1)}\in\{\chi\in X_I:r_I(\chi)>0\}$ such that
$\chi^{(j+1)}=T_I\chi^{(j)}$, with indices understood modulo $p$.  Write
\[
  \chi^{(j)}=(w^{(j)},z^{(j)}),
  \qquad
  \nu_j=u(z^{(j)}),
  \qquad
  R_j=r_I(\chi^{(j)})^2,
\]
and define the per-step norm factor
\begin{equation}\label{eq:periodic-factor}
  \rho=\left(\prod_{j=0}^{p-1}R_j\right)^{1/(2p)}.
\end{equation}

\begin{theorem}\label{thm:periodic}
Every exact periodic orbit satisfies
\[
  \rho\le c_I.
\]
Equality holds if and only if the orbit is the symmetric endpoint fixed point
\[
  \chi_I^*=(w_I^*,w_I^*),
  \qquad
  w_I^*=\frac12e_{i_-}+\frac12e_{i_+}.
\]
\end{theorem}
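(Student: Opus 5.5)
The plan is to sum the coboundary identity of \cref{lem:cohomology} over one period, so that the transfer term $h_J$ telescopes away and only the critical slope and the defects remain. The identity holds only on $P_J$, so first we place the orbit inside some $P_J$.

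\emph{Step 1: constant support.} Let $J_j=\supp w^{(j)}$. From \eqref{eq:T-def},
\[
  J_{j+1}=J_j\cap\{i:\phi_i(z^{(j)})>0\}\subseteq J_j .
\]
Periodicity then forces $J_j=J$ for all $j$. Since $z^{(j)}=w^{(j-1)}$, each $z^{(j)}$ also has support exactly $J$, so every $\chi^{(j)}$ lies in the relative interior $P_J$. If $|J|=1$, then $z^{(j)}$ is a vertex $e_i$, so $u(z^{(j)})=\lambda_i$ and $\phi_i(z^{(j)})=0$. This gives $R_j=0$, contradicting $r_I>0$. Hence $|J|\ge2$. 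Also $\phi_i(z^{(j)})>0$ for every $i\in J$; this is consistent, because $u(z^{(j)})\in(a_J,b_J)$.

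\emph{Step 2: telescoping.} Apply \eqref{eq:cohomology} at each $\chi^{(j)}\in P_J$ and sum over $j=0,\dots,p-1$. Because $\chi^{(p)}=\chi^{(0)}$, the $h_J$ terms cancel, which gives
\[
  \log\rho=\log c_J-\frac1p\sum_{j=0}^{p-1}\mathcal D_J(\chi^{(j)}).
\]
Nonnegativity of $\mathcal D_J$ yields $\rho\le c_J$. The monotonicity remark after \eqref{eq:w-star-def} gives $c_J\le c_I$.

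\emph{Step 3: equality case.} This is the step needing the most care. Equality $\rho=c_I$ forces two things:
\begin{itemize}
  \item $c_J=c_I$, so $i_\pm(J)=i_\pm(I)$ and $m_J=m_I=:m$;
  \item every defect vanishes, i.e. $u(z^{(j)})=m$ for all $j$, by \cref{lem:cohomology}.
\end{itemize}
The stepsize is therefore the constant $1/m$. Iterating the map around one full period gives $w^{(0)}\propto\phi(m)^{p}\odot w^{(0)}$, where $\phi_i(m)=(1-\lambda_i/m)^2$. Every coordinate of $w^{(0)}$ on $J$ is positive, so $\phi_i(m)$ must take the same value for all $i\in J$. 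This value equals $c_I^2$ at the two endpoints and is strictly smaller at interior eigenvalues, because $|m-\lambda_i|<(b-a)/2$ there. Hence $J=\{i_-,i_+\}$. The constraint $u(w^{(j)})=m$ then forces masses $1/2,1/2$, so the orbit is $\chi_I^*$.

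Conversely, $\chi_I^*$ is a fixed point with factor exactly $c_I$ by \cref{prop:endpoint-orbit}. The main point requiring care is Step 1: excluding boundary states so that \eqref{eq:cohomology} applies at every point of the orbit.
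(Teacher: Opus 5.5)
Your proof is correct, but it is organized differently from the paper's.

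You first place the whole orbit in a single relative-interior face $P_J$. You then telescope the coboundary identity of \cref{lem:cohomology} over one period, which gives the exact formula $\log\rho=\log c_J-\frac1p\sum_j\mathcal D_J(\chi^{(j)})$. So you obtain the active-support bound $\rho\le c_J$ first, and the global bound follows from the monotonicity $c_J\le c_I$.

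The paper's proof never invokes $h_J$, $P_J$, or \cref{lem:cohomology}. It proceeds in three steps:
\begin{itemize}
  \item It multiplies the coordinate update over a period to get the mode identities $|P(\lambda_i)|=\rho^p$ for $P(\lambda)=\prod_j(1-\lambda/\nu_j)$.
  \item It uses monotonicity of $|P|$ outside the active hull $[\lambda_-,\lambda_+]$ to pass directly to the global endpoints, giving $\rho^p\le F$ and $\rho^p\le G$.
  \item It combines $\min\{F,G\}\le F^{\vartheta}G^{1-\vartheta}$ with \cref{lem:endpoint-ineq} factor by factor.
\end{itemize}

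The two arguments rest on the same mechanism. Summing $h_J(T_I\chi)-h_J(\chi)$ over a period is exactly taking the weighted geometric mean of the two endpoint mode identities, with active rather than global endpoints.
\begin{itemize}
  \item Your version is the periodic specialization of the proof of \cref{lem:measure-bound}, with exact return replacing Poincar\'e recurrence. It produces an exact defect formula, which makes the equality case immediate.
  \item The paper's version is more self-contained. It needs only \cref{lem:endpoint-ineq} and avoids the face bookkeeping, because the monotonicity of $|P|$ absorbs the comparison between active and global endpoints.
\end{itemize}
Your equality analysis matches the paper's in substance: a constant step $1/m$, hence $\phi_i(m)$ constant on $J$, hence $J=\{i_-,i_+\}$ with weights $\tfrac12$.

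One small correction. Positivity of $\phi_i(z^{(j)})$ for interior $i\in J$ follows from $J_{j+1}=J_j$, not from $u(z^{(j)})\in(a_J,b_J)$. In any case it is not needed, since \cref{lem:cohomology} only requires $\chi\in P_J$.
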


\begin{proof}
For every coordinate $i\in I$, the normalized update is
\begin{equation}\label{eq:periodic-coordinate-update}
  w_i^{(j+1)}
  =\frac{(1-\lambda_i/\nu_j)^2}{R_j}\,w_i^{(j)}.
\end{equation}
Let
\[
  S=\{i\in I:w_i^{(j)}>0\text{ for some }j\}
\]
be the active support of the periodic orbit.  A coordinate that vanishes can never be recreated.  Hence periodicity implies that every $i\in S$ is positive at every time, and no factor $1-\lambda_i/\nu_j$ with $i\in S$ can vanish.  Moreover, $S$ contains at least two indices.  Indeed, if $S=\{i\}$, then periodicity and the second-coordinate relation in $T_I$ give $w^{(j)}=z^{(j)}=e_i$ for every $j$, and hence $r_I(\chi^{(j)})=0$, contradicting the assumed regularity of the periodic orbit.  Multiplying \eqref{eq:periodic-coordinate-update} over one period and using $w_i^{(p)}=w_i^{(0)}>0$ gives
\begin{equation}\label{eq:periodic-mode}
  \left|\prod_{j=0}^{p-1}
  \left(1-\frac{\lambda_i}{\nu_j}\right)\right|
  =\rho^p,
  \qquad i\in S.
\end{equation}

Set
\[
  P(\lambda)=\prod_{j=0}^{p-1}\left(1-\frac{\lambda}{\nu_j}\right),
  \qquad
  \lambda_-:=\min_{i\in S}\lambda_i,
  \qquad
  \lambda_+:=\max_{i\in S}\lambda_i.
\]
Each $\nu_j$ is a convex combination of the eigenvalues indexed by $S$, so
\[
  a_I\le\lambda_-\le\nu_j\le\lambda_+\le b_I.
\]
On $[a_I,\lambda_-]$, every factor in $P(\lambda)$ is nonnegative and nonincreasing in $\lambda$.  Therefore $P$ is nonincreasing there, and \eqref{eq:periodic-mode} at the active lower endpoint gives
\begin{equation}\label{eq:periodic-F}
  \rho^p=P(\lambda_-)
  \le P(a_I)
  =\prod_{j=0}^{p-1}\frac{\nu_j-a_I}{\nu_j}
  =:F.
\end{equation}
On $[\lambda_+,b_I]$, every factor has nonpositive sign and
\[
  |P(\lambda)|=\prod_{j=0}^{p-1}\left(\frac{\lambda}{\nu_j}-1\right)
\]
is nondecreasing.  Hence \eqref{eq:periodic-mode} at the active upper endpoint yields
\begin{equation}\label{eq:periodic-G}
  \rho^p=|P(\lambda_+)|
  \le |P(b_I)|
  =\prod_{j=0}^{p-1}\frac{b_I-\nu_j}{\nu_j}
  =:G.
\end{equation}

Because every active coordinate is positive at every point of the periodic orbit, each mean $\nu_j$ lies strictly between the active endpoints $\lambda_-$ and $\lambda_+$.  Hence $F,G>0$.  Let $\vartheta=b_I/(a_I+b_I)$.  Then
\begin{align}
  \rho^p
  &\le\min\{F,G\}
   \le F^{\vartheta}G^{1-\vartheta}\notag\\
  &=\prod_{j=0}^{p-1}
    \left(\frac{\nu_j-a_I}{\nu_j}\right)^{b_I/(a_I+b_I)}
    \left(\frac{b_I-\nu_j}{\nu_j}\right)^{a_I/(a_I+b_I)}
  \le c_I^p,
  \label{eq:periodic-product-bound}
\end{align}
where the last inequality applies \cref{lem:endpoint-ineq} to each $\nu_j\in[a_I,b_I]$.  Thus $\rho\le c_I$.

Suppose now that $\rho=c_I$.  Then every inequality in \eqref{eq:periodic-product-bound} is an equality.  In particular, the product of $p$ quantities, each at most $c_I$, equals $c_I^p$; therefore equality holds in \cref{lem:endpoint-ineq} for every $j$.  Hence
\begin{equation}\label{eq:periodic-midpoint}
  \nu_j=m_I=\frac{a_I+b_I}{2}
  \qquad(j=0,\ldots,p-1).
\end{equation}
Substituting \eqref{eq:periodic-midpoint} into \eqref{eq:periodic-mode} gives, for every $i\in S$,
\[
  \left|1-\frac{\lambda_i}{m_I}\right|=c_I.
\]
The only solutions in $[a_I,b_I]$ are $\lambda_i=a_I$ and $\lambda_i=b_I$.  Thus $S\subseteq\{i_-,i_+\}$.  The mean condition \eqref{eq:periodic-midpoint} rules out a singleton support and forces equal endpoint weights.  Hence
\[
  z^{(j)}=w_I^*,
  \qquad
  w^{(j)}=z^{(j+1)}=w_I^*
\]
for every $j$, so the orbit is the fixed point $\chi_I^*$.  The converse follows immediately from \cref{prop:endpoint-orbit}.
\end{proof}

A general trajectory need not be periodic. The role of periodicity above --- returning to the starting state so that the boundary term $h_J\circ T_I-h_J$ telescopes to zero --- is played in the general case by Poincar\'e recurrence, to which we now turn.

\section{Compactification and invariant measures}\label{sec:invariant}
We now pass from exact periodic orbits to general invariant behavior.  Throughout this section, $|I|\ge2$. We adjoin a terminal state for finite termination. Let
\begin{equation}\label{eq:regular-singular}
  X_I^{\mathrm{reg}}=\{\chi\in X_I:r_I(\chi)>0\},
  \qquad
  \Sigma_I=X_I\setminus X_I^{\mathrm{reg}},
  \qquad
  \widehat X_I=X_I\sqcup\{\theta\}.
\end{equation}
Equip $\widehat X_I$ with the disjoint-union topology, with $\theta$ isolated. Then $\widehat X_I$ is a compact metric space.

Define
\begin{equation}\label{eq:hat-def}
  \widehat T_I\chi=
  \begin{cases}
    T_I\chi, & \chi\in X_I^{\mathrm{reg}},\\
    \theta, & \chi\in\Sigma_I\cup\{\theta\},
  \end{cases}
  \qquad
  \widehat f_I(\chi)=
  \begin{cases}
    \log r_I(\chi), & \chi\in X_I^{\mathrm{reg}},\\
    -\infty, & \chi\in\Sigma_I\cup\{\theta\}.
  \end{cases}
\end{equation}
The map $\widehat T_I$ is Borel and is continuous outside $\Sigma_I$. $\widehat f_I$ is upper semicontinuous: it is continuous on $X_I^{\mathrm{reg}}$, the point $\theta$ is isolated, and $\log r_I(\chi)\to-\infty$ as $\chi$ approaches $\Sigma_I$.  Throughout the paper, if an extended-real function $f$ is bounded above, we use the convention
\[
  \int f\,d\mu
  :=\lim_{M\to\infty}\int\max\{f,-M\}\,d\mu\in[-\infty,\infty).
\]
In particular, positive mass on a set where $f=-\infty$ forces the integral to be $-\infty$.  Since $|I|\ge2$,
\begin{equation}\label{eq:Mstar}
  M_I:=\max_{\chi\in\widehat X_I}\widehat f_I(\chi)
  =\frac12\log\max_{\chi\in X_I}r_I(\chi)^2
\end{equation}
is a finite real number.

For a nonempty $J\subseteq I$, set
\[
  X_J=\Delta_J\times\Delta_J,
  \qquad
  \widehat X_J=X_J\cup\{\theta\}.
\]
Every $\widehat X_J$ is forward invariant.  Indeed, if
$\chi=(w,z)\in X_J\cap X_I^{\mathrm{reg}}$, then $w_i=0$ for every
$i\notin J$, and the first component $w^+$ of $T_I\chi$ satisfies
\[
  w_i^+=\frac{\phi_i(z)w_i}{\ip{\phi(z)}{w}}=0
  \qquad(i\notin J).
\]
The second component of $T_I\chi$ is $w$, so $T_I\chi\in X_J$.  If
$\chi\in X_J\cap\Sigma_I$ or $\chi=\theta$, then
$\widehat T_I\chi=\theta\in\widehat X_J$.  Thus no spectral coordinate
outside $J$ can be created by the compactified dynamics.

We use the standard ergodic decomposition, Poincar\'e recurrence, and
Birkhoff's pointwise ergodic theorem.  See, for example, \cite{Walters1982}.  Since $\widehat X_I$ is a compact
metric space and $\widehat T_I$ is Borel, every invariant probability
measure admits an ergodic decomposition.  All integrals of
$\widehat f_I$ are understood in the extended sense fixed above.
Because $\widehat f_I\le M_I$, whenever
$\int\widehat f_I\,d\nu> -\infty$ its positive part is bounded and its
negative part is integrable.  Hence $\widehat f_I\in L^1(\nu)$, so
Birkhoff's theorem is applicable to $\widehat f_I$ under precisely the
finite-average hypothesis used below.

We shall use two standard weak-convergence facts.  If $F$ is upper semicontinuous, bounded above, and possibly takes the value $-\infty$, then
\begin{equation}\label{eq:extended-portmanteau}
  \mu_j\Rightarrow\mu
  \quad\Longrightarrow\quad
  \limsup_{j\to\infty}\int F\,d\mu_j\le\int F\,d\mu.
\end{equation}
Indeed, for $F_M=\max\{F,-M\}$ the bounded upper-semicontinuous Portmanteau inequality applies, and monotone convergence applied to $C-F_M$ permits $M\to\infty$, where $C$ is any finite upper bound for $F$.  We also use that if $G$ is bounded and Borel and its discontinuity set has $\mu$-measure zero, then $\int G\,d\mu_j\to\int G\,d\mu$.

\begin{lemma}\label{lem:avoid-sing}
Let $\nu$ be a $\widehat T_I$-invariant Borel probability measure. Then $\nu(\Sigma_I)=0$. If, in addition,
\[
  \int\widehat f_I\,d\nu>-\infty,
\]
then $\nu(\{\theta\})=0$ and therefore $\nu(\Sigma_I\cup\{\theta\})=0$.
\end{lemma}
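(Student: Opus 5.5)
The proof will use only invariance, $\nu(A)=\nu(\widehat T_I^{-1}A)$, together with the fact that $\widehat T_I$ maps $\Sigma_I\cup\{\theta\}$ to $\theta$.

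\emph{First claim: $\nu(\Sigma_I)=0$.} The plan is to show that no point of $\Sigma_I$ has a preimage under $\widehat T_I$. Then $\widehat T_I^{-1}(\Sigma_I)=\emptyset$, and invariance gives $\nu(\Sigma_I)=\nu(\emptyset)=0$.

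Points of $\Sigma_I\cup\{\theta\}$ are mapped to $\theta\notin\Sigma_I$, so they contribute no preimages. It remains to check that $T_I\chi\notin\Sigma_I$ whenever $\chi=(w,z)\in X_I^{\mathrm{reg}}$. Write $T_I\chi=(w^+,w)$ with $w^+_i\propto\phi_i(z)w_i$, and argue by contradiction: suppose $r_I(w^+,w)^2=\ip{\phi(w)}{w^+}=0$.

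Every $i$ with $w^+_i>0$ then satisfies $\phi_i(w)=0$, that is, $\lambda_i=u(w)$. Since the $\lambda_i$ are distinct, this means $w^+$ is a single vertex $e_j$ with $\lambda_j=u(w)$. Because $\supp w^+\subseteq\supp w$, the index $j$ lies in $\supp w$.

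Now split into two cases.
\begin{itemize}
\item If $w$ has support of size at least two, then $u(w)$ lies strictly between $\min_{\supp w}\lambda_i$ and $\max_{\supp w}\lambda_i$. Hence $\lambda_j=u(w)$ is not an extreme eigenvalue of the support. But $w^+=e_j$ forces $\phi_i(z)=0$ for every other $i\in\supp w$, including both extreme indices. Since $\phi_i(z)=0$ means $\lambda_i=u(z)$, two distinct eigenvalues would both equal the single number $u(z)$, which is impossible.
\item If $w=e_j$ is a vertex, then $r_I(\chi)^2=\phi_j(z)>0$, which is consistent. The successor is $(e_j,e_j)$, and $r_I(e_j,e_j)=0$. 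So in this case the successor really is singular.
\end{itemize}

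The second case is the main obstacle: preimages of $\Sigma_I$ inside $X_I^{\mathrm{reg}}$ do exist, namely states of the form $(e_j,z)$ with $u(z)\ne\lambda_j$. The preimage argument therefore has to be refined, and the plan is to use the chain structure directly.

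Let $V=\{(e_j,z):u(z)\ne\lambda_j\}$. By the case analysis, $\widehat T_I^{-1}(\Sigma_I)=V$. No point maps into $V$: if $T_I\chi=(e_j,w)$ then $\supp w\ni j$, and the argument above shows that either $w=e_j$, in which case $u(w)=\lambda_j$ and the image is not in $V$, or $\phi_i(z)=0$ for all $i\in\supp w\setminus\{j\}$, which forces $|\supp w|=2$ with $u(z)$ equal to the other eigenvalue. In the latter sub-case I will check whether $\lambda_j=u(w)$ can be avoided. If this check fails, I will fall back on the general principle that, for $\nu$ invariant, any set $A$ with $\widehat T_I^{-1}A\subseteq B$ and $\widehat T_I^{-1}B=\emptyset$ has $\nu(A)\le\nu(B)=0$. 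More generally, any finite preimage chain terminating at the empty set gives measure zero, because invariance transfers mass backwards along the chain.

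So the concrete plan is to show that the preimage chain $\Sigma_I\leftarrow V\leftarrow\cdots$ terminates in finitely many steps (at most $|I|$, since supports strictly shrink along it). Invariance then gives each set in the chain measure zero, and in particular $\nu(\Sigma_I)=0$.

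\emph{Second claim: $\nu(\{\theta\})=0$ under the finite-integral hypothesis.} Since $\widehat f_I(\theta)=-\infty$, the convention fixed before \eqref{eq:extended-portmanteau} says that $\nu(\{\theta\})>0$ would force $\int\widehat f_I\,d\nu=-\infty$. This contradicts the hypothesis, so $\nu(\{\theta\})=0$, and combined with the first claim, $\nu(\Sigma_I\cup\{\theta\})=0$.
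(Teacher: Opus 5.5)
\paragraph{Verdict.} Your second claim, that $\nu(\{\theta\})=0$ when the integral is finite, is correct and is exactly the paper's argument. The first claim, $\nu(\Sigma_I)=0$, is not actually proved.

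\paragraph{Where the first claim breaks down.} The claim $\widehat T_I^{-1}(\Sigma_I)=\emptyset$ fails, as you noticed. The follow-up claim that no point maps into $V$ also fails. Take $0<t<1$, $i\ne j$, and $\chi=(te_j+(1-t)e_i,\,e_i)$.
\begin{itemize}
\item $\chi$ is regular, since $r_I(\chi)^2=t\phi_j(e_i)>0$.
\item $T_I\chi=(e_j,\,te_j+(1-t)e_i)$, and $u(te_j+(1-t)e_i)\ne\lambda_j$, so $T_I\chi\in V$.
\end{itemize}
So everything rests on your fallback, that $\widehat T_I^{-n}(\Sigma_I)=\emptyset$ for some finite $n$. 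You only assert this, justified by ``supports strictly shrink along it.'' That is not automatic. Along a general orbit the first-coordinate support can stay constant for many steps. A coordinate can also die without any coordinate having died at the previous step, e.g.\ $\supp w_{k-1}=\supp w_k=\{1,2,3\}$ with $u(w_{k-1})=\lambda_2$.

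\paragraph{How the fallback could be completed.} The assertion is true, but it needs a genuine induction over orbits that are regular up to time $n-1$ and singular at time $n$.
\begin{itemize}
\item One first shows $w_{n-1}=w_n=e_j$ and $\supp w_{n-2}=\{i,j\}$.
\item At each earlier step, the index $d_k$ killed at step $k\to k+1$ satisfies $\lambda_{d_k}=u(w_{k-1})$ and is extreme in $\supp w_k$.
\item Since $u(w_{k-1})$ lies strictly inside the hull of $\{\lambda_i:i\in\supp w_{k-1}\}$ and $\supp w_{k-1}\supseteq\supp w_k$, the set $\supp w_{k-1}$ must contain one new index lying beyond $\lambda_{d_k}$. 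That index is again extreme.
\item Hence $|\supp w_0|=n$, and $\widehat T_I^{-n}(\Sigma_I)=\emptyset$ for $n>|I|$.
\end{itemize}
Without this argument, the proposal has a gap exactly at its decisive step.

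\paragraph{The simpler route.} The missing observation is to pull back $\{\theta\}$ rather than $\Sigma_I$. Regular states are mapped by $T_I$ into $X_I$, never to $\theta$. Therefore $\widehat T_I^{-1}(\{\theta\})=\Sigma_I\sqcup\{\theta\}$. Invariance gives $\nu(\{\theta\})=\nu(\Sigma_I)+\nu(\{\theta\})$, and since $\nu(\{\theta\})$ is finite, $\nu(\Sigma_I)=0$. This is the paper's proof. It needs no case analysis and uses only the fact you announced in your opening sentence: $\widehat T_I$ sends $\Sigma_I\cup\{\theta\}$ to $\theta$.
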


\begin{proof}
By construction,
\[
  \widehat T_I^{-1}(\{\theta\})=\Sigma_I\cup\{\theta\}.
\]
Invariance gives
\[
  \nu(\{\theta\})=\nu(\Sigma_I)+\nu(\{\theta\}),
\]
so $\nu(\Sigma_I)=0$. If $\nu(\{\theta\})>0$, then $\widehat f_I(\theta)=-\infty$, together with the finite upper bound \eqref{eq:Mstar}, forces $\int\widehat f_I\,d\nu=-\infty$.
\end{proof}

\begin{lemma}\label{lem:support-face}
Let $\nu$ be an ergodic $\widehat T_I$-invariant Borel probability measure satisfying
\[
  \int\widehat f_I\,d\nu>-\infty.
\]
Then there exists $J\subseteq I$, with $|J|\ge2$, such that
\begin{equation}\label{eq:ergodic-face-full}
  \nu(P_J)=1.
\end{equation}
\end{lemma}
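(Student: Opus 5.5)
By \cref{lem:avoid-sing}, $\nu(\Sigma_I\cup\{\theta\})=0$. So $\nu$ lives on $X_I^{\mathrm{reg}}$, and $T_I$ maps this set into itself $\nu$-almost surely. For each state $\chi=(w,z)\in X_I$ I introduce the \emph{joint support}
\[
  J(\chi):=\{i\in I: w_i>0\text{ and } z_i>0\}.
\]
The plan is to show that this index set is $\nu$-a.s.\ constant, that it equals the support of both components, and that it has at least two elements.

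\textbf{Step 1: monotonicity of supports.} Consider $\chi=(w,z)\in X_I^{\mathrm{reg}}$ with $T_I\chi=(w^+,w)$. The formula $w_i^+=\phi_i(z)w_i/R$ shows that $\supp w^+\subseteq\supp w$. Let $s(\chi):=\supp w\subseteq I$. Then $s(T_I\chi)\subseteq s(\chi)$, so $|s|$ is a bounded, integer-valued, nonincreasing function along orbits.

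Since $\nu$ is invariant, $\int |s|\circ T_I\,d\nu=\int|s|\,d\nu$, while $|s|\circ T_I\le|s|$ pointwise. Hence $|s|\circ T_I=|s|$ $\nu$-a.s., and combined with the inclusion this gives $s\circ T_I=s$ a.s. Thus $s$ is an invariant function into the finite set $2^I$. Ergodicity then forces $s\equiv J$ a.s. for a single $J$.

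\textbf{Step 2: the second coordinate.} The second component of $T_I\chi$ is $w$, so the support of the second coordinate of $T_I\chi$ is $s(\chi)$. Invariance of $\nu$ means that the pushforward $T_{I*}\nu$ equals $\nu$. Therefore the second-coordinate support, $\supp z$, also equals $J$ $\nu$-a.s. Consequently $\nu$-a.e.\ $\chi$ has $w_i>0$ and $z_i>0$ exactly for $i\in J$, i.e.\ $\chi\in P_J$ (viewing $\Delta_J$ as a face).

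\textbf{Step 3: $|J|\ge2$.} If $J=\{i\}$, then a.s.\ $w=z=e_i$. In that case $\phi_i(z)=(1-\lambda_i/\lambda_i)^2=0$, so $r_I(\chi)=0$ and $\chi\in\Sigma_I$. This has measure zero, a contradiction. The case $J=\emptyset$ is impossible inside the simplex. Hence $|J|\ge 2$ and $\nu(P_J)=1$.

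The main subtlety is measurability and the a.s.\ handling of the pushforward in Step 2. Both are harmless here: $\chi\mapsto\supp w$ is Borel (it is a finite partition by sets defined through coordinate positivity), and $T_I$ is defined $\nu$-a.e. I would phrase Step 2 as the computation $\nu\{\supp z=J\}=\nu(T_I^{-1}\{\supp z=J\})=\nu\{\supp w=J\}=1$.
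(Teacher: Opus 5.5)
Your proof is correct and uses the same mechanism as the paper's. Supports can only shrink under $T_I$, so invariance plus ergodicity pins down a single support $J$. The shift structure of $T_I$ (the second output is the old $w$) transfers this to $z$ exactly as the paper's equality of coordinate marginals does. The singleton case is excluded because $(e_i,e_i)\in\Sigma_I$.

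The difference is only in packaging. The paper chooses a minimal face $J$ with $\nu(X_J)=1$ and shows that each forward-invariant subface $X_{J\setminus\{i\}}\cup\{\theta\}$ is $\nu$-null. You instead show directly that $\chi\mapsto\supp w$ is a.s.\ invariant via $\int|s|\circ\widehat T_I\,d\nu=\int|s|\,d\nu$, which is the device the paper itself uses in the Hilbert-space \cref{lem:hilb-persistent-support}. To make the pointwise inequality $|s|\circ\widehat T_I\le|s|$ hold everywhere, set $s(\theta)=\emptyset$.
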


\begin{proof}
By \cref{lem:avoid-sing}, $\nu(\Sigma_I\cup\{\theta\})=0$.  Choose a minimal nonempty set $J\subseteq I$ such that $\nu(X_J)=1$; such a set exists because $I$ is finite.

If $|J|=1$, say $J=\{i\}$, then $X_J=\{(e_i,e_i)\}\subset\Sigma_I$, because $r_I(e_i,e_i)=0$.  This contradicts $\nu(X_J)=1$ and $\nu(\Sigma_I)=0$.  Hence $|J|\ge2$.

Fix $i\in J$ and set
\[
  A_i:=X_{J\setminus\{i\}}\cup\{\theta\}.
\]
This is exactly the compactified face on which the $i$th coordinate is
absent from both components, and the preceding face-invariance argument
shows that $A_i$ is forward invariant.  Hence
$A_i\subseteq\widehat T_I^{-1}A_i$.  Since $\nu$ is invariant,
\[
  \nu(\widehat T_I^{-1}A_i)=\nu(A_i),
\]
so the inclusion implies
$\nu(\widehat T_I^{-1}A_i\setminus A_i)=0$.  Thus $A_i$ is invariant
modulo $\nu$.  Ergodicity gives $\nu(A_i)\in\{0,1\}$, while minimality
of $J$ excludes the value $1$; consequently $\nu(A_i)=0$.

If $(w,z)\in X_J\cap X_I^{\mathrm{reg}}$ and $w_i=0$, then the $i$th coordinate is zero in both components of $\widehat T_I(w,z)=(w^+,w)$.  Hence
\[
  \{w_i=0\}\cap X_J\cap X_I^{\mathrm{reg}}
  \subseteq \widehat T_I^{-1}A_i.
\]
Using invariance, $\nu(A_i)=0$, $\nu(X_J)=1$, and $\nu(\Sigma_I\cup\{\theta\})=0$, we obtain
\[
  \nu\{w_i=0\}=0
  \qquad(i\in J).
\]

Extend the coordinate projections $\pi_1,\pi_2:X_I\to\Delta_I$ to $\widehat X_I$ by assigning them the same arbitrary value at $\theta$.  On $X_I^{\mathrm{reg}}$, the second output of $\widehat T_I(w,z)=T_I(w,z)$ is $w$, so
$\pi_2\circ\widehat T_I=\pi_1$ outside $\Sigma_I$.  Since $\nu(\Sigma_I\cup\{\theta\})=0$, invariance $(\widehat T_I)_*\nu=\nu$ therefore gives
\begin{equation}\label{eq:marginals-equal}
\begin{aligned}
  (\pi_2)_*\nu
  &=(\pi_2)_*(\widehat T_I)_*\nu
   =(\pi_2\circ\widehat T_I)_*\nu
   =(\pi_1)_*\nu.
\end{aligned}
\end{equation}
Thus the two coordinate marginals coincide.  In particular, for every
$i\in J$, let $Z_i=\{v\in\Delta_I:v_i=0\}$.  Then
\[
\begin{aligned}
  \nu\{z_i=0\}
  &=\bigl((\pi_2)_*\nu\bigr)(Z_i)
   =\bigl((\pi_1)_*\nu\bigr)(Z_i)
   =\nu\{w_i=0\}=0.
\end{aligned}
\]
Since $\nu(X_J)=1$, both coordinates are supported on $J$ and are
strictly positive there almost surely.  Intersecting these finitely many
full-measure positivity events gives $\nu(P_J)=1$.

\end{proof}

\begin{lemma}\label{lem:measure-bound}
If $\nu$ is a $\widehat T_I$-invariant Borel probability measure and
\[
  \int\widehat f_I\,d\nu>-\infty,
\]
then
\begin{equation}\label{eq:measure-bound}
  \int\widehat f_I\,d\nu\le\log c_I.
\end{equation}
\end{lemma}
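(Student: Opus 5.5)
We first reduce to ergodic measures and then run the periodic-orbit argument of \cref{thm:periodic} with Poincar\'e recurrence in place of exact periodicity.

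\emph{Reduction to ergodic measures.} Take the ergodic decomposition $\nu=\int\nu_\omega\,dP(\omega)$. Because $\widehat f_I\le M_I$, the map $\omega\mapsto\int\widehat f_I\,d\nu_\omega$ is bounded above, and $\int\widehat f_I\,d\nu=\int\bigl(\int\widehat f_I\,d\nu_\omega\bigr)dP(\omega)$ (monotone convergence applied to $\max\{\widehat f_I,-M\}$). The left side is finite, so $P$-almost every component has $\int\widehat f_I\,d\nu_\omega>-\infty$. It therefore suffices to prove \eqref{eq:measure-bound} for ergodic $\nu$ with finite integral.

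\emph{Ergodic case.} By \cref{lem:support-face}, there is $J\subseteq I$ with $|J|\ge2$ and $\nu(P_J)=1$. By invariance, $\nu$-a.e.\ orbit stays in $P_J$ for all times: $\nu(\bigcap_k\widehat T_I^{-k}P_J)=1$. Along such an orbit, \cref{lem:cohomology} applies at every step. Summing \eqref{eq:cohomology} over $k=0,\dots,n-1$ gives
\[
  \sum_{k<n}\widehat f_I(\widehat T_I^k\chi)=n\log c_J-\sum_{k<n}\mathcal D_J(\widehat T_I^k\chi)+h_J(\widehat T_I^n\chi)-h_J(\chi)
  \le n\log c_J+h_J(\widehat T_I^n\chi)-h_J(\chi),
\]
using $\mathcal D_J\ge0$. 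Birkhoff's theorem applies because $\widehat f_I\in L^1(\nu)$. Hence for a.e.\ $\chi$ the left side divided by $n$ tends to $\int\widehat f_I\,d\nu$; ergodicity identifies the limit as this constant.

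\emph{Main obstacle: the boundary term.} We must show $\liminf_n\frac1n\bigl(h_J(\widehat T_I^n\chi)-h_J(\chi)\bigr)\le0$ for a.e.\ $\chi$. The function $h_J$ is finite on $P_J$ but unbounded and possibly non-integrable, so we cannot simply integrate it. Poincar\'e recurrence handles this. Fix $L$ and let $K_L=\{\chi\in P_J:h_J(\chi)\le L\}$. Since $h_J$ is finite on $P_J$, $\nu(K_L)\uparrow1$ as $L\to\infty$. By Poincar\'e recurrence, a.e.\ $\chi\in K_L$ returns to $K_L$ at infinitely many times $n_j\to\infty$, and along these times
\[
  h_J(\widehat T_I^{n_j}\chi)-h_J(\chi)\le L-h_J(\chi).
\]
The right side is a fixed finite quantity. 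Dividing by $n_j$ and letting $j\to\infty$ gives $\int\widehat f_I\,d\nu\le\log c_J$ for a.e.\ $\chi\in K_L$. Since $\nu(K_L)>0$ for large $L$, such $\chi$ exist. (Alternatively, $h_J$ is bounded below by $0$ because $w_i\le1$, so $-h_J(\chi)\le0$ already and only the upper control at return times is needed.)

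\emph{Conclusion.} Finally, $c_J\le c_I$ by the monotonicity remark after \eqref{eq:w-star-def}. The ergodic case therefore yields $\int\widehat f_I\,d\nu\le\log c_I$, and integrating over the ergodic decomposition gives \eqref{eq:measure-bound} for general $\nu$. The delicate points are the measurability and integrability in the decomposition step and applying recurrence on the sublevel set $K_L$ rather than on all of $P_J$. We expect the recurrence step on $h_J$ to be the main place where care is needed.
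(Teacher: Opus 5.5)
Your proof is correct and follows the paper's argument. You reduce to ergodic components, use \cref{lem:support-face} to obtain $\nu(P_J)=1$, telescope the coboundary identity \eqref{eq:cohomology} along a Birkhoff-generic orbit that stays in $P_J$, discard $\mathcal D_J\ge0$, and conclude with $c_J\le c_I$. The only variation is how you dispose of the boundary term. The paper uses topological recurrence $\widehat T_I^{n_\ell}\chi\to\chi$ together with continuity of $h_J$ on the relatively open set $P_J$. You instead apply measure-theoretic Poincar\'e recurrence to the sublevel set $\{h_J\le L\}$. That needs only that $h_J$ is finite and measurable on $P_J$, and it works equally well.
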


\begin{proof}
Let $\nu=\int\eta\,d\Lambda(\eta)$ be the ergodic decomposition.  Applying the decomposition to the bounded truncations $\max\{\widehat f_I,-M\}$ and then letting $M\to\infty$ gives
\[
  \int\widehat f_I\,d\nu
  =\int\left(\int\widehat f_I\,d\eta\right)d\Lambda(\eta).
\]
Since the left-hand side is finite and $\widehat f_I$ is bounded above, $\int\widehat f_I\,d\eta>-\infty$ for $\Lambda$-almost every ergodic component.  It therefore suffices to prove the bound for an ergodic $\nu$ with finite integral.

By \cref{lem:support-face}, $\nu(P_J)=1$ for some $J\subseteq I$
with $|J|\ge2$.  Set
\begin{equation}\label{eq:persistent-face-core}
  P_J^\infty
  :=\bigcap_{n\ge0}
    \widehat T_I^{-n}\bigl(P_J\cap X_I^{\mathrm{reg}}\bigr).
\end{equation}
Because $\nu(P_J)=1$ and $\nu(\Sigma_I)=0$, the set
$P_J\cap X_I^{\mathrm{reg}}$ has full measure.  Invariance then gives,
for every $n\ge0$,
\[
  \nu\!\left(
    \widehat T_I^{-n}(P_J\cap X_I^{\mathrm{reg}})
  \right)
  =\nu(P_J\cap X_I^{\mathrm{reg}})=1.
\]
The countable intersection in \eqref{eq:persistent-face-core} therefore
also has full measure:
\[
  \nu(P_J^\infty)=1.
\]
For every $\chi\in P_J^\infty$ and every $k\ge0$,
\[
  \widehat T_I^k\chi\in P_J\cap X_I^{\mathrm{reg}}.
\]
Thus the entire forward orbit remains nonterminal in the same relative
interior face.  In particular, the coboundary identity
\eqref{eq:cohomology} applies at every iterate and 
$h_J$ is finite along the whole orbit.

Since $\widehat f_I\in L^1(\nu)$, both the Birkhoff-generic set for
$\widehat f_I$ and the set of recurrent points have full measure.
Intersect these two sets with $P_J^\infty$ and choose $\chi$ in the
resulting full-measure intersection.  Then there are $n_\ell\to\infty$
such that $\widehat T_I^{n_\ell}\chi\to\chi$, and
\[
  \frac1n\sum_{k=0}^{n-1}\widehat f_I(\widehat T_I^k\chi)
  \longrightarrow \int\widehat f_I\,d\nu.
\]
Since $\chi\in P_J$ and $P_J$ is open in its face, the recurrence
$\widehat T_I^{n_\ell}\chi\to\chi$ takes place inside the domain on which
$h_J$ is continuous.  Hence
\[
  h_J(\widehat T_I^{n_\ell}\chi)\longrightarrow h_J(\chi),
\]
and therefore,
\[
  \frac{h_J(\widehat T_I^{n_\ell}\chi)-h_J(\chi)}{n_\ell}\longrightarrow0.
\]
Summing \eqref{eq:cohomology} from $k=0$ to $n_\ell-1$ yields
\begin{align*}
  \frac1{n_\ell}\sum_{k=0}^{n_\ell-1}
  \widehat f_I(\widehat T_I^k\chi)
  &=\log c_J
    -\frac1{n_\ell}\sum_{k=0}^{n_\ell-1}
      \mathcal D_J(\widehat T_I^k\chi)+
    \frac{h_J(\widehat T_I^{n_\ell}\chi)-h_J(\chi)}{n_\ell}.
\end{align*}
Letting $\ell\to\infty$ and using $\mathcal D_J\ge0$ gives
\[
  \int\widehat f_I\,d\nu\le\log c_J\le\log c_I.
\]
The same bound holds for almost every ergodic component and hence for the original invariant measure.
\end{proof}

\section{Empirical measures and the sharp upper bound}\label{sec:empirical}

The following is an extended-real, Borel-map variant of the semi-uniform ergodic principle developed by Sturman and Stark \cite{SturmanStark2000}.  We include the short proof because the maps used below may be discontinuous on $D$, while the upper-semicontinuous potential takes the value $-\infty$ there.

\begin{proposition}\label{prop:abstract-semiuniform}
Let $X$ be a compact metric space, let $T:X\to X$ be Borel, and let $D\subset X$ be a Borel set containing every discontinuity point of $T$.  Let $f:X\to[-\infty,\infty)$ be upper semicontinuous and bounded above, assume that $f=-\infty$ on $D$, and let $\beta\in\R$.  Suppose that
\begin{equation}\label{eq:abstract-invariant-bound}
  \int f\,d\mu\le\beta
\end{equation}
for every $T$-invariant Borel probability measure $\mu$ satisfying $\int f\,d\mu>-\infty$.  Then, for every $\eta>0$, there exists $B_\eta<\infty$ such that
\begin{equation}\label{eq:abstract-semiuniform}
  \sum_{k=0}^{n-1}f(T^kx)
  \le n(\beta+\eta)+B_\eta
\end{equation}
for every integer $n\ge1$ and every $x\in X$ such that $x,Tx,\ldots,T^{n-1}x$ all lie outside $D$.
\end{proposition}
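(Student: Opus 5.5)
We argue by contradiction, following the standard semi-uniform ergodic argument of Sturman and Stark. Suppose that for some $\eta>0$ no constant $B_\eta$ works. Then there are integers $n_j\ge1$ and points $x_j$ whose orbit segments $x_j,\ldots,T^{n_j-1}x_j$ avoid $D$, and which satisfy
\[
  \sum_{k=0}^{n_j-1}f(T^kx_j)> n_j(\beta+\eta)+j .
\]
Since $f\le C$ for a finite constant $C$, the left side is at most $n_jC$. Hence $n_j\to\infty$; otherwise bounded $n_j$ would contradict the extra term $j\to\infty$. Form the empirical measures
\[
  \mu_j=\frac1{n_j}\sum_{k=0}^{n_j-1}\delta_{T^kx_j}.
\]
By compactness of $X$, pass to a weakly convergent subsequence $\mu_j\Rightarrow\mu$. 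By construction $\int f\,d\mu_j>\beta+\eta$. The extended Portmanteau inequality \eqref{eq:extended-portmanteau} applies to the upper semicontinuous, bounded-above $f$ and gives
\[
  \int f\,d\mu\ge\limsup_j\int f\,d\mu_j\ge\beta+\eta>-\infty .
\]

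Next we show that $\mu$ is $T$-invariant. For $G$ continuous on $X$,
\[
  \int G\circ T\,d\mu_j-\int G\,d\mu_j=\frac{G(T^{n_j}x_j)-G(x_j)}{n_j}\to0 .
\]
We also need $\int G\circ T\,d\mu_j\to\int G\circ T\,d\mu$. The function $G\circ T$ is bounded and Borel, and it is continuous outside $D$, so it suffices to show $\mu(\overline D)=0$, or more precisely that $\mu$ gives zero mass to the discontinuity set of $G\circ T$. Here the hypothesis $f=-\infty$ on $D$ enters. Upper semicontinuity gives $f=-\infty$ on $\overline D$: if $y_m\to y$ with $y_m\in D$, then $f(y)\ge\limsup f(y_m)=-\infty$, which is no restriction. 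So we must argue the other way, using the finite integral. Since $\int f\,d\mu>-\infty$ and $f=-\infty$ on $D$, we get $\mu(D)=0$. The discontinuity set of $G\circ T$ is contained in $D$, so it is $\mu$-null. The second weak-convergence fact then yields convergence of $\int G\circ T\,d\mu_j$. Consequently $\int G\circ T\,d\mu=\int G\,d\mu$ for all continuous $G$, which means $T_*\mu=\mu$.

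Thus $\mu$ is an invariant probability measure with $\int f\,d\mu\ge\beta+\eta>\beta$ and finite integral. This contradicts \eqref{eq:abstract-invariant-bound} and proves \eqref{eq:abstract-semiuniform}.

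The main obstacle is the invariance step. $T$ is only Borel, so we must check that the discontinuity set of $G\circ T$ lies in $D$ and has $\mu$-measure zero. The key observation is that a finite integral forces $\mu(D)=0$, because $f=-\infty$ on $D$ under the extended-integral convention. We will also verify carefully that the subsequence extraction preserves the strict lower bound, and that the boundary terms $G(T^{n_j}x_j)$ are defined even though $T^{n_j}x_j$ may lie in $D$. This holds because $T$ is defined everywhere and $G$ is bounded.
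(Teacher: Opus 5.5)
Your argument is correct and is essentially the paper's proof. It follows the same steps: contradiction, $n_j\to\infty$, a weak limit $\mu$ of the empirical measures, and the extended Portmanteau inequality to get $\int f\,d\mu\ge\beta+\eta>-\infty$ (the paper re-derives this inline via the truncations $\max\{f,-M\}$ and monotone convergence), then $\mu(D)=0$, so $\varphi\circ T$ is $\mu$-a.e.\ continuous and $\mu$ is invariant. Delete the sentence claiming that upper semicontinuity forces $f=-\infty$ on $\overline D$: it is false, you retract it anyway, and only $\mu(D)=0$ is used.
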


\begin{proof}
Suppose that \eqref{eq:abstract-semiuniform} fails for some $\eta>0$.  For every $j$ choose an orbit segment $(x_j,n_j)$ avoiding $D$ such that
\begin{equation}\label{eq:abstract-unbounded-excess}
  \sum_{k=0}^{n_j-1}f(T^kx_j)-n_j(\beta+\eta)>j.
\end{equation}
Since $f$ is bounded above, $n_j\to\infty$.  Passing to a subsequence, the empirical measures
\[
  \mu_j=\frac1{n_j}\sum_{k=0}^{n_j-1}\delta_{T^kx_j}
\]
converge weakly to a probability measure $\mu$.

For $M>0$, set $f_M=\max\{f,-M\}$.  Then $f_M$ is bounded and upper semicontinuous, and $f_M\ge f$.  From \eqref{eq:abstract-unbounded-excess} and \eqref{eq:extended-portmanteau},
\[
  \int f_M\,d\mu
  \ge\limsup_{j\to\infty}\int f_M\,d\mu_j
  \ge\beta+\eta.
\]
Let $U=\max\{0,\sup_X f\}$.  Since $U-f_M\uparrow U-f$, monotone convergence gives
\begin{equation}\label{eq:abstract-high-limit}
  \int f\,d\mu=\lim_{M\to\infty}\int f_M\,d\mu
  \ge\beta+\eta>-\infty.
\end{equation}
Because $f=-\infty$ on $D$ and is bounded above, \eqref{eq:abstract-high-limit} implies $\mu(D)=0$.

Let $\varphi\in C(X)$.  Every discontinuity of $\varphi\circ T$ is a discontinuity of $T$ and therefore lies in $D$.  Since $\mu(D)=0$, the function $\varphi\circ T$ is bounded and $\mu$-almost everywhere continuous.  The Portmanteau consequence for bounded almost-everywhere continuous functions therefore gives
\[
  \int\varphi\circ T\,d\mu_j\longrightarrow
  \int\varphi\circ T\,d\mu.
\]
Weak convergence also gives $\int\varphi\,d\mu_j\to\int\varphi\,d\mu$, while
\[
  \left|\int\varphi\circ T\,d\mu_j-
  \int\varphi\,d\mu_j\right|
  =\frac{|\varphi(T^{n_j}x_j)-\varphi(x_j)|}{n_j}
  \le\frac{2\norm{\varphi}_\infty}{n_j}\longrightarrow0.
\]
Thus $\mu$ is $T$-invariant, contradicting \eqref{eq:abstract-invariant-bound} and \eqref{eq:abstract-high-limit}.
\end{proof}

Call
\[
  \chi,\widehat T_I\chi,\ldots,\widehat T_I^{m-1}\chi
\]
a \emph{nonterminal orbit segment} if all $m$ displayed points lie in $X_I^{\mathrm{reg}}$.

\begin{lemma}\label{lem:uniform-orbit}
For every $\eta>0$, there exists $B_{I,\eta}<\infty$ such that every nonterminal orbit segment with arbitrary initial state $\chi\in X_I$ satisfies
\begin{equation}\label{eq:uniform-orbit}
  \sum_{k=0}^{m-1}\widehat f_I(\widehat T_I^k\chi)
  \le m(\log c_I+\eta)+B_{I,\eta}
  \qquad(m\ge1).
\end{equation}
\end{lemma}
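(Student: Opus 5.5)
This is a direct application of \cref{prop:abstract-semiuniform} with
\[
  X=\widehat X_I,\quad T=\widehat T_I,\quad f=\widehat f_I,\quad D=\Sigma_I\cup\{\theta\},\quad \beta=\log c_I .
\]
The plan is to check the hypotheses of the proposition one at a time and then read off \eqref{eq:uniform-orbit} with $B_{I,\eta}:=B_\eta$.

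First, the topological setting. $\widehat X_I$ is a compact metric space. $\widehat T_I$ is Borel, as recorded in \cref{sec:invariant}.

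Second, $D$ must contain every discontinuity point of $\widehat T_I$. On $X_I^{\mathrm{reg}}$, which is relatively open in $X_I$ because $r_I^2$ is continuous, $\widehat T_I$ coincides with $T_I$. The formula \eqref{eq:T-def} is continuous wherever $\ip{\phi(z)}{w}>0$. This needs $\phi$ continuous on all of $\Delta_I$, which holds because $u(z)\ge\lambda_{\min}>0$. The point $\theta$ is isolated, so the map is continuous there. Hence all discontinuities lie in $\Sigma_I\subseteq D$. The set $D$ is Borel: $\Sigma_I$ is the closed zero set of $r_I$, and $\{\theta\}$ is a point.

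Third, $f$ must be upper semicontinuous, bounded above, and equal to $-\infty$ on $D$. \Cref{sec:invariant} already notes upper semicontinuity and the bound $\widehat f_I\le M_I$. The value $-\infty$ on $D$ holds by definition.

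Fourth, the invariant-measure bound \eqref{eq:abstract-invariant-bound} is exactly \cref{lem:measure-bound}. It applies to every $\widehat T_I$-invariant Borel probability measure with $\int\widehat f_I\,d\nu>-\infty$.

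With these checks, \cref{prop:abstract-semiuniform} gives, for each $\eta>0$, a constant $B_\eta$ with
\[
  \sum_{k=0}^{n-1}\widehat f_I(\widehat T_I^kx)\le n(\log c_I+\eta)+B_\eta
\]
for every orbit segment avoiding $D$. A nonterminal orbit segment in our sense has all its points in $X_I^{\mathrm{reg}}=X_I\setminus\Sigma_I$. Since $\theta\notin X_I$, such a segment also avoids $\{\theta\}$, so it avoids $D$. Consequently $\widehat T_I^k\chi=T_I^k\chi$ along the segment, and the estimate is precisely \eqref{eq:uniform-orbit}. The initial state $\chi\in X_I$ is arbitrary, with no matched-initialization constraint, because the proposition is uniform over all starting points.

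The only delicate step is the discontinuity check. $T_I$ genuinely fails to extend continuously to $\Sigma_I$, since the normalization divides by zero there. That failure is harmless only because $\Sigma_I$ was placed inside $D$, where $f=-\infty$. Everything else is bookkeeping already done in \cref{sec:invariant,sec:empirical}.
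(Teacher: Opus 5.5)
Your proposal is correct and is exactly the paper's argument: the paper proves \cref{lem:uniform-orbit} by applying \cref{prop:abstract-semiuniform} with $X=\widehat X_I$, $T=\widehat T_I$, $D=\Sigma_I\cup\{\theta\}$, $f=\widehat f_I$, $\beta=\log c_I$, citing the properties recorded in \cref{sec:invariant} and \cref{lem:measure-bound} for the hypotheses. Your explicit discontinuity and Borel checks spell out what the paper calls ``the preceding properties''; they rely on the standing assumption $|I|\ge2$, which makes $\beta=\log c_I$ finite and is the hypothesis under which \cref{lem:measure-bound} is stated.
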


\begin{proof}
Apply \cref{prop:abstract-semiuniform} with
\[
  X=\widehat X_I,\qquad T=\widehat T_I,\qquad
  D=\Sigma_I\cup\{\theta\},\qquad
  f=\widehat f_I,\qquad \beta=\log c_I.
\]
The preceding properties of $\widehat T_I$ and $\widehat f_I$, together with \cref{lem:measure-bound}, verify all the hypotheses; the nonterminal segments are precisely the admissible segments.
\end{proof}

\begin{proof}[Proof of \cref{thm:main}]
Apply \cref{lem:active-reduction} and let $I_0$ index the distinct eigenvalues in $\Lambda(g_0)$. If $|I_0|=1$, then $g_1$ is either zero or remains in the same eigenspace; in the latter case $\alpha_1$ is the reciprocal eigenvalue and $g_2=0$. Hence the root factor is zero.

Assume $|I_0|\ge2$. If $g_1=0$, the conclusion is immediate. Otherwise, the normalized state
\[
  \chi_1=(w_1,w_0)
\]
belongs to the closed face $X_{I_0}$. For every $m\ge2$ with $g_m\ne0$, the states $\chi_1,\ldots,\chi_{m-1}$ form a nonterminal orbit segment, and
\begin{equation}\label{eq:arbitrary-init-product}
  \frac{\norm{g_m}}{\norm{g_0}}
  =\frac{\norm{g_1}}{\norm{g_0}}
   \prod_{k=1}^{m-1}r_{I_0}(\chi_k),
  \qquad \chi_k=\widehat T_{I_0}^{k-1}\chi_1.
\end{equation}
Applying \cref{lem:uniform-orbit} to this segment gives, for every $\eta>0$,
\begin{equation}\label{eq:arbitrary-init-bound}
  \norm{g_m}
  \le \norm{g_1}\exp(B_{I_0,\eta})
  (c_{I_0}e^\eta)^{m-1}
  \qquad(m\ge2).
\end{equation}
If termination occurs at or before index $m$, then $g_m=0$ and the same inequality is trivial. The factor $\norm{g_1}/\norm{g_0}$ and the shift from the exponent $m$ to $m-1$ concern only the single initial step.  Both disappear after taking $m$th roots.  Thus, after dividing by $\norm{g_0}$ and letting $m\to\infty$, we obtain
\[
  \rho_H(g_0,\alpha_0)\le c_{I_0}e^\eta.
\]
Letting $\eta\downarrow0$ proves the active-spectrum bound. Since the interval determined by $I_0$ is contained in $[\lambda_{\min}(H),\lambda_{\max}(H)]$, we have $c_{I_0}\le\cH$.

The exact endpoint orbit in \cref{prop:endpoint-orbit}, with the global spectral endpoints and matched initialization, has root factor $\cH$. This proves \eqref{eq:global-main} and the attainment statement. The one-eigenvalue case was handled above.
\end{proof}

\begin{corollary}\label{cor:uniform-envelope}
Assume \eqref{eq:matched-init-BB1}. For every $\eps$ satisfying
\[
  0<\eps<1-\cH,
\]
there exists $C_\eps(H)<\infty$, independent of $g_0$, such that
\begin{equation}\label{eq:uniform-envelope}
  \norm{g_k}
  \le C_\eps(H)(\cH+\eps)^k\norm{g_0}
  \qquad(k\ge0).
\end{equation}
Consequently, the infimum of the uniform envelope rates in \eqref{eq:uniform-rate-definition} is exactly $\cH$.
\end{corollary}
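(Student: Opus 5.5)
The plan is to take the uniform orbit bound of \cref{lem:uniform-orbit}, which holds over all orbits, and combine it with the fact that under matched initialization the normalized dynamics starts at $k=0$ from $\chi_0=(w_0,w_0)$. The only obstruction to uniformity is then the dependence on the active index set, and there are finitely many of those. Fix $\eps\in(0,1-\cH)$ and choose $\eta>0$ with $e^\eta\cH\le\cH+\eps$. If $\cH=0$, take any $\eta$ with $e^{\eta}\cdot 0=0$; that case is in fact trivial because of finite termination. For each subset $I_0$ of the distinct eigenvalues with $|I_0|\ge2$, \cref{lem:uniform-orbit} supplies a constant $B_{I_0,\eta}$. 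Set $B=\max_{I_0}B_{I_0,\eta}$, which is finite since there are at most $2^d$ such subsets.

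Next, for a given $g_0\ne0$, reduce via \cref{lem:active-reduction}. Orthogonal diagonalization preserves both norms and the matched step, so only the energies on the distinct active eigenspaces matter, indexed by $I_0$. If $|I_0|=1$, the matched step equals $1/\lambda$ and $g_1=0$, so \eqref{eq:uniform-envelope} holds with any $C\ge1$. If $|I_0|\ge2$, the matched step gives $\chi_0=(w_0,w_0)\in X_{I_0}$, and \eqref{eq:state-and-cocycle} holds from $k=0$. For every $m\ge1$ with $g_m\ne0$, the states $\chi_0,\ldots,\chi_{m-1}$ form a nonterminal orbit segment, so \eqref{eq:norm-cocycle} and \eqref{eq:uniform-orbit} give
\[
  \norm{g_m}\le e^{B}(c_{I_0}e^\eta)^m\norm{g_0}\le e^{B}(\cH+\eps)^m\norm{g_0},
\]
using $c_{I_0}\le\cH$. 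If instead $g_m=0$, the bound is trivial, and $m=0$ is covered by taking $C_\eps(H)=\max\{1,e^B\}$. This $C_\eps(H)$ is independent of $g_0$.

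Finally, the statement about the infimum follows in two parts. Every $\gamma\in(\cH,1)$ is a uniform envelope rate: take $\eps=\gamma-\cH$ in the bound just proved. Conversely, suppose some $\gamma<\cH$ were a uniform envelope rate. Apply the definition to the balanced endpoint vector of \cref{prop:endpoint-orbit}, which requires at least two distinct eigenvalues and is exactly the situation $\cH>0$. This gives $\cH^k\le C\gamma^k$ for all $k$, which is impossible. Hence the infimum is exactly $\cH$; when $\cH=0$ the claim is vacuous.

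The main obstacle is making sure the constant does not depend on the face through which the trajectory enters. I would handle this by taking the maximum over the finitely many index sets as above. A second point needing care is that \cref{lem:uniform-orbit} must accept an arbitrary initial state in $X_{I_0}$, including the diagonal states $(w_0,w_0)$ with boundary zeros. This is already built into the lemma, which allows arbitrary $\chi\in X_I$.
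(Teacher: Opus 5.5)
Your proposal is correct and essentially reproduces the paper's proof: you start the matched normalized dynamics at $\chi_0=(w_0,w_0)$, apply \cref{lem:uniform-orbit} on each of the finitely many active index sets, take the maximum of $1$ and the resulting constants, dispose of the singleton case by one-step termination, and use the balanced endpoint orbit of \cref{prop:endpoint-orbit} for the converse. The only cosmetic difference is that you fix a single $\eta$ with $\cH e^\eta\le\cH+\eps$ and then use $c_{I_0}\le\cH$, whereas the paper chooses $\eta_J$ separately for each face so that $c_Je^{\eta_J}\le\cH+\eps$.
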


\begin{proof}
Under matched initialization, the normalized orbit starts from $\chi_0=(w_0,w_0)$. There are only finitely many possible active spectra $J$ drawn from the distinct eigenvalues of $H$. If $|J|\ge2$, choose $\eta_J>0$ such that
\[
  c_Je^{\eta_J}\le\cH+\eps.
\]
The cocycle identity and \cref{lem:uniform-orbit} give
\[
  \norm{g_k}
  \le \exp(B_{J,\eta_J})(\cH+\eps)^k\norm{g_0}
\]
for every matched trajectory with active spectrum $J$. If $|J|=1$, the first step terminates exactly. Taking the maximum of $1$ and the finitely many constants $\exp(B_{J,\eta_J})$ proves \eqref{eq:uniform-envelope}. Conversely, if $\cH>0$, the exact endpoint orbit excludes every $\gamma\in(0,\cH)$.
\end{proof}

\begin{corollary}\label{cor:weighted-envelope}
Let $\Psi:\spec(H)\to(0,\infty)$ and use the matched weighted initialization \eqref{eq:matched-weighted-init}. For every $0<\eps<1-\cH$, there is $C_\eps(H,\Psi)<\infty$ such that
\[
  \norm{g_k}\le C_\eps(H,\Psi)(\cH+\eps)^k\norm{g_0}
  \qquad(g_0\ne0,\ k\ge0).
\]
Consequently, the infimum of $\gamma\in(0,1)$ for which the displayed estimate holds with a constant independent of $g_0$ is exactly $\cH$.
\end{corollary}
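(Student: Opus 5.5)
The plan is to reduce the weighted statement to \cref{cor:uniform-envelope} through the spectral conjugacy of \cref{lem:spectral-conjugacy}, keeping track of how the constants change.

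Set $S=\Psi(H)^{1/2}$ and $\widetilde g_k=Sg_k$. The matched weighted initialization \eqref{eq:matched-weighted-init} can be rewritten as
\[
  \alpha_0^{\Psi,\rm mat}=\frac{\widetilde g_0^T\widetilde g_0}{\widetilde g_0^TH\widetilde g_0},
\]
so it is exactly the ordinary matched BB1 initialization \eqref{eq:matched-init-BB1} for $\widetilde g_0$. By \eqref{eq:conjugate-update}, $\{\widetilde g_k\}$ is then a matched BB1 gradient sequence for the same $H$. If $g_0\ne0$, invertibility of $S$ gives $\widetilde g_0\ne0$, and \cref{cor:uniform-envelope} applies. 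It yields $\norm{\widetilde g_k}\le C_\eps(H)(\cH+\eps)^k\norm{\widetilde g_0}$ with a constant independent of $\widetilde g_0$.

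To return to the original sequence, use $\norm{g_k}\le\norm{S^{-1}}\norm{\widetilde g_k}$ and $\norm{\widetilde g_0}\le\norm{S}\norm{g_0}$. Since $\spec(H)$ is finite and $\Psi>0$ there, both $\norm{S}$ and $\norm{S^{-1}}$ are finite. This gives the displayed estimate with
\[
  C_\eps(H,\Psi)=\kappa_S\,C_\eps(H),\qquad \kappa_S=\norm{S}\norm{S^{-1}}.
\]
Termination transfers in both directions, because $g_k=0$ if and only if $\widetilde g_k=0$. Every $\gamma\in(\cH,1)$ is therefore an admissible rate, so the infimum is at most $\cH$.

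For the reverse inequality, assume $\cH>0$; otherwise there is nothing to prove. Take the two-endpoint gradient satisfying the weighted balance \eqref{eq:weighted-balance}. Its conjugate $\widetilde g_0$ then has equal endpoint energies, and \cref{prop:endpoint-orbit} gives $\norm{\widetilde g_k}=\cH^k\norm{\widetilde g_0}$. Conjugating back yields the lower bound
\[
  \norm{g_k}\ge\kappa_S^{-1}\cH^k\norm{g_0}.
\]
Suppose some $\gamma<\cH$ admitted a uniform constant $C$. Then $C\gamma^k\ge\kappa_S^{-1}\cH^k$ for all $k$, which fails as $k\to\infty$. This step is routine: the fixed constant $\kappa_S$ cannot compensate for a geometric gap.

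The only point that needs care is the uniformity claim: the constant must not depend on $g_0$. This holds because $C_\eps(H)$ from \cref{cor:uniform-envelope} is already uniform over all matched BB1 trajectories, and the conjugation constant $\kappa_S$ depends only on $H$ and $\Psi$.
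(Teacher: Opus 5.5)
Your proposal is correct and follows the paper's proof: conjugate by $S=\Psi(H)^{1/2}$, apply \cref{cor:uniform-envelope} with the norm-equivalence constant $\norm{S}\norm{S^{-1}}$, and exclude every $\gamma<\cH$ via the matched weighted balanced endpoint orbit. Your lower bound with the factor $\kappa_S^{-1}$ is enough; in fact that orbit satisfies $\norm{g_k}=\cH^k\norm{g_0}$ exactly, because both endpoint components are multiplied in modulus by $\cH$ at every step.
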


\begin{proof}
Apply \cref{cor:uniform-envelope} to $\widetilde g_k=\Psi(H)^{1/2}g_k$ and use norm equivalence as in \cref{lem:spectral-conjugacy}.
If $H$ has only one distinct eigenvalue, the threshold is zero by finite termination.  Otherwise, the exact matched weighted endpoint orbit in \cref{cor:weighted-main} excludes every $\gamma\in(0,\cH)$.
\end{proof}

\begin{corollary}\label{cor:error-objective}
Assume $g_0\ne0$, let $e_k=x_k-x_*$, and set $\Delta_k=q(x_k)-q(x_*)$. For BB1, and also for every fixed positive weighted rule, let the first step $\alpha_0>0$ be arbitrary and let $\rho$ denote the root factor of the actual gradient sequence $\{g_k\}$; by \cref{lem:spectral-conjugacy}, this is also the root factor of the conjugate sequence. Then
\begin{equation}\label{eq:error-root-equality}
  \limsup_{k\to\infty}
  \left(\frac{\norm{e_k}}{\norm{e_0}}\right)^{1/k}
  =\rho
\end{equation}
and
\begin{equation}\label{eq:objective-root-square}
  \limsup_{k\to\infty}
  \left(\frac{\Delta_k}{\Delta_0}\right)^{1/k}
  =\rho^2.
\end{equation}
Therefore, over arbitrary positive first steps, the sharp global worst-case factors are $\cH$ for the error norm and $\cH^2$ for the objective gap.  When $H$ has at least two distinct eigenvalues, these factors are attained by the corresponding matched balanced endpoint orbit.
\end{corollary}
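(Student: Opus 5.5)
The relations between gradient, error and objective gap are fixed linear maps of $H$: $g_k=He_k$ and $\Delta_k=\tfrac12 e_k^THe_k=\tfrac12 g_k^TH^{-1}g_k$. The plan is to show that these fixed norm equivalences disappear after taking $k$th roots, exactly as in the proof of \cref{lem:spectral-conjugacy}.

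\emph{Error.} For the error identity \eqref{eq:error-root-equality} we use
\[
  \lambda_{\max}(H)^{-1}\norm{g_k}\le\norm{e_k}\le\lambda_{\min}(H)^{-1}\norm{g_k}.
\]
Dividing by $\norm{e_0}$, which is comparable to $\norm{g_0}$ by the same constants, the ratio $\norm{e_k}/\norm{e_0}$ lies between $\kappa(H)^{-1}\norm{g_k}/\norm{g_0}$ and $\kappa(H)\norm{g_k}/\norm{g_0}$. Taking $k$th roots and limsups then gives equality with $\rho$.

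\emph{Objective gap.} For \eqref{eq:objective-root-square} we use
\[
  \frac{\norm{g_k}^2}{2\lambda_{\max}(H)}\le\Delta_k\le\frac{\norm{g_k}^2}{2\lambda_{\min}(H)},
\]
so $\Delta_k/\Delta_0$ lies within the factor $\kappa(H)^{\pm1}$ of $(\norm{g_k}/\norm{g_0})^2$. Since $t\mapsto t^2$ is continuous and monotone on $[0,\infty)$, the limsup of $(\norm{g_k}/\norm{g_0})^{2/k}$ equals $\rho^2$. After finite termination, $g_k=e_k=0$ and $\Delta_k=0$, so all three sequences vanish together and the identities still hold with the convention $0^{1/k}=0$.

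\emph{Weighted rules.} For a fixed positive weighted rule, $\rho$ is by definition the root factor of the actual sequence $\{g_k\}$, so the same inequalities apply verbatim. \Cref{lem:spectral-conjugacy} only identifies this $\rho$ with the conjugate BB1 factor.

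\emph{Sharp global factors.} Combining these identities with \eqref{eq:active-main}, \eqref{eq:global-main} and \cref{cor:weighted-main} gives that the suprema over $g_0\ne0$ and $\alpha_0>0$ are $\cH$ and $\cH^2$. Attainment comes from the matched balanced endpoint orbit: the BB1 one of \cref{prop:endpoint-orbit}, or the weighted one satisfying \eqref{eq:weighted-balance}, both of which have gradient root factor exactly $\cH$. For an explicit check, \eqref{eq:endpoint-component-orbit} gives $e_k$ componentwise as $\cH^k$ times $e_0$ up to signs, so $\norm{e_k}=\cH^k\norm{e_0}$ and $\Delta_k=\cH^{2k}\Delta_0$ exactly.

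No step is a genuine obstacle. The only care needed is to treat termination consistently and to note that the root factor here is a limsup rather than a limit, so the squaring step should be justified through monotonicity rather than by passing to a limit.
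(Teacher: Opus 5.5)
Your proposal is correct and follows the paper's proof: you use the two-sided bounds $\lambda_{\min}(H)\norm{e_k}\le\norm{g_k}\le\lambda_{\max}(H)\norm{e_k}$ and $\norm{g_k}^2/(2\lambda_{\max}(H))\le\Delta_k\le\norm{g_k}^2/(2\lambda_{\min}(H))$, whose fixed constants disappear under $k$th roots, and you get sharpness from the (weighted) balanced endpoint orbit. The extra details you supply are correct and are left implicit in the paper: passing the limsup through the monotone continuous map $t\mapsto t^2$, the termination convention, and the exact identities $\norm{e_k}=\cH^k\norm{e_0}$ and $\Delta_k=\cH^{2k}\Delta_0$ on the endpoint orbit.
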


\begin{proof}
Since $g_k=He_k$,
\[
  \lambda_{\min}(H)\norm{e_k}
  \le\norm{g_k}
  \le\lambda_{\max}(H)\norm{e_k}.
\]
Fixed norm-equivalence constants disappear after taking $k$th roots, proving \eqref{eq:error-root-equality}. Moreover,
\[
  \Delta_k=\frac12e_k^THe_k
  =\frac12g_k^TH^{-1}g_k,
\]
so
\[
  \frac{1}{2\lambda_{\max}(H)}\norm{g_k}^2
  \le\Delta_k
  \le\frac{1}{2\lambda_{\min}(H)}\norm{g_k}^2.
\]
This proves \eqref{eq:objective-root-square}. Sharpness follows from the endpoint orbit; for a weighted method use \eqref{eq:weighted-balance}.
\end{proof}

\section{Uniqueness of the maximizing invariant measure}\label{sec:unique}
The coboundary certificate also determines the unique invariant measure attaining the sharp average. Recall
\[
  \chi_I^*=(w_I^*,w_I^*),
  \qquad
  w_I^*=\frac12e_{i_-}+\frac12e_{i_+}.
\]

\begin{proposition}\label{prop:unique}
Let $|I|\ge2$. Then
\begin{equation}\label{eq:max-measure-value}
  \sup\left\{
    \int\widehat f_I\,d\nu:
    \nu\text{ is a }\widehat T_I\text{-invariant probability measure}
  \right\}
  =\log c_I.
\end{equation}
The unique maximizing invariant probability measure is
\begin{equation}\label{eq:unique-max-measure}
  \delta_{\chi_I^*}.
\end{equation}
\end{proposition}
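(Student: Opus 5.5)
The plan has three steps: an upper bound from \cref{lem:measure-bound}, attainment by the endpoint fixed point, and uniqueness from the coboundary defect.

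\emph{Upper bound and attainment.} If $\int\widehat f_I\,d\nu=-\infty$ there is nothing to show. Otherwise \cref{lem:measure-bound} gives $\int\widehat f_I\,d\nu\le\log c_I$. The state $\chi_I^*$ is a fixed point of $T_I$ by \cref{prop:endpoint-orbit}, and $r_I(\chi_I^*)=c_I>0$. Hence $\delta_{\chi_I^*}$ is invariant and $\int\widehat f_I\,d\delta_{\chi_I^*}=\log c_I$, so the supremum in \eqref{eq:max-measure-value} equals $\log c_I$ and is attained.

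\emph{Uniqueness, reduction to the ergodic case.} Let $\nu$ be invariant with $\int\widehat f_I\,d\nu=\log c_I$. By the ergodic decomposition used in \cref{lem:measure-bound}, almost every ergodic component $\eta$ has $\int\widehat f_I\,d\eta\le\log c_I$. Since the average over components equals $\log c_I$, almost every component attains $\log c_I$ exactly. It therefore suffices to show that an ergodic maximizer equals $\delta_{\chi_I^*}$.

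\emph{Uniqueness, ergodic case.} For ergodic $\eta$ with $\int\widehat f_I\,d\eta=\log c_I$, \cref{lem:support-face} gives a face $J$ with $|J|\ge2$ and $\eta(P_J)=1$. Rerun the recurrence argument of \cref{lem:measure-bound}, now keeping the defect term. Choose $\chi$ that is Birkhoff-generic for both $\widehat f_I$ and $\mathcal D_J$, recurrent, and lying in $P_J^\infty$. Then
\[
\log c_I=\log c_J-\int\mathcal D_J\,d\eta .
\]
To justify this I must check that $\mathcal D_J\in L^1(\eta)$. This follows from the identity itself: $\mathcal D_J=\log c_J-\widehat f_I+h_J\circ T-h_J$. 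Both $\mathcal D_J\ge0$ and $\log c_J-\widehat f_I\ge\log c_J-M_I$ are bounded below, and the telescoping term has zero average along the recurrent subsequence. A cleaner route is to bound the partial averages $\frac1{n_\ell}\sum\mathcal D_J$ and apply Fatou's lemma through Birkhoff's theorem for the nonnegative function $\mathcal D_J$, which holds even with infinite integral. Since $c_J\le c_I$ and $\mathcal D_J\ge0$, we get $c_J=c_I$ and $\mathcal D_J=0$ $\eta$-a.e. Thus $a_J=a_I$, $b_J=b_I$, and $u(z)=m_I$ $\eta$-a.e. by \cref{lem:cohomology}.

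\emph{Main obstacle: extracting the point mass.} By invariance and equality of marginals \eqref{eq:marginals-equal}, $u(w)=m_I$ also holds $\eta$-a.e. Hence along $\eta$-almost every orbit every step size equals $2/(a_I+b_I)$. The coordinate update is then $w_i^{k+1}\propto(1-\lambda_i/m_I)^2w_i^k$. The endpoint multipliers have modulus $c_I$, while every interior $\lambda_i\in(a_I,b_I)$ has $|1-\lambda_i/m_I|<c_I$. So for $\chi\in P_J$ with an interior active index $i$, the ratio $w_i^k/w_{i_-}^k$ tends to zero geometrically. This is incompatible with recurrence to $\chi\in P_J$, where $w_i>0$, so $J=\{i_-,i_+\}$. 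On this two-point face the condition $u(w)=u(z)=m_I$ forces $w=z=w_I^*$. Hence $\eta=\delta_{\chi_I^*}$, and every maximizing $\nu$ has all its ergodic components equal to $\delta_{\chi_I^*}$, so $\nu=\delta_{\chi_I^*}$. The delicate points are the integrability bookkeeping for $\mathcal D_J$ and making the a.e. statements hold along a single generic recurrent orbit; I would handle the latter by intersecting countably many full-measure sets.
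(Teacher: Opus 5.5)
Your proof is correct, and up to the final step it follows the paper's argument. The shared steps are the upper bound from \cref{lem:measure-bound}, attainment at the fixed point $\chi_I^*$, and reduction to ergodic maximizers. Then a recurrent generic point in $P_J^\infty$ is used, along which the coboundary identity forces $c_J=c_I$ and $\mathcal D_J=0$ a.e. This gives $u(z)=u(w)=m_I$ a.e.

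On integrability, your second route is the right one. It is what the paper does with the truncations $\min\{\mathcal D_J,M\}$ and monotone convergence. Your first route, as phrased, controls the telescoping term only along the recurrent subsequence at a single point. It therefore needs the same extended Birkhoff argument before it says anything about $\int\mathcal D_J\,d\eta$.

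The genuine difference is how interior coordinates are excluded. The paper argues statically. On $\{u(z)=m_I\}$ one has $r_I(w,z)^2\le c_I^2$, with strict inequality whenever an interior weight is positive. Since $\widehat f_I\le\log c_I$ a.e.\ while $\int\widehat f_I\,d\nu=\log c_I$, it follows that $r_I=c_I$ a.e., so all interior weights vanish.

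You argue dynamically. With the step frozen at $2/(a_I+b_I)$ along a.e.\ orbit, each interior-to-endpoint weight ratio is multiplied by $(1-\lambda_i/m_I)^2/c_I^2<1$ at every step. This contradicts recurrence of a point of $P_J$.

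Both arguments are valid. The paper's version needs nothing beyond the integral equality already in hand, and it is the form reused in \cref{prop:hilb-unique-maximizer}. Yours needs one more full-measure intersection, namely points whose whole forward orbit satisfies $u(z_k)=m_I$. In exchange, it makes the mechanism explicit: interior modes are strictly damped relative to the endpoint modes under the midpoint step.
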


\begin{proof}
The upper bound is \cref{lem:measure-bound}.  The point $\chi_I^*$ is a fixed point of $\widehat T_I$: $u(z)=(a_I+b_I)/2$, so $\phi_{i_-}(z)=\phi_{i_+}(z)=c_I^2$, the two endpoint weights $1/2$ are preserved by $T_I$, and $\widehat f_I(\chi_I^*)=\log c_I$.  Hence $\delta_{\chi_I^*}$ is invariant and attains the supremum.

For uniqueness, let $\nu$ be an ergodic maximizing measure, $\int\widehat f_I\,d\nu=\log c_I$, and adopt the notation of \cref{lem:measure-bound}.  Choose $\chi\in P_J^\infty$ so that $\chi$ is recurrent and belongs to the countable intersection of the Birkhoff-generic sets for $\widehat f_I$ and for all bounded truncations
\[
  \mathcal D_{J,M}:=\min\{\mathcal D_J,M\},
  \qquad M\in\mathbb N,
\]
where the truncations are extended by zero outside $P_J$.  Let $n_\ell\to\infty$ be recurrence times for $\chi$.  Equality throughout the proof of \cref{lem:measure-bound} then forces the following.

\emph{(1) $c_J=c_I$.}  Since $c_J\le c_I$ with equality only when $a_J=a_I$ and $b_J=b_I$, the face $J$ contains both endpoints of $I$.

\emph{(2) $\mathcal D_J=0$ $\nu$-almost everywhere.}  The chain in \cref{lem:measure-bound} shows
\[
  \frac1{n_\ell}\sum_{k=0}^{n_\ell-1}
  \mathcal D_J(\widehat T_I^k\chi)\longrightarrow0
\]
along the recurrent generic point.  For each $M>0$,
$0\le\mathcal D_{J,M}\le\mathcal D_J$, so Birkhoff's theorem gives
\[
  \int\mathcal D_{J,M}\,d\nu=0.
\]
Letting $M\to\infty$ and using monotone convergence yields
$\int\mathcal D_J\,d\nu=0$, hence $\mathcal D_J=0$ $\nu$-almost everywhere.  By the equality case of \cref{lem:cohomology},
\[
  u(z)=m_J=\frac{a_J+b_J}{2}
       =\frac{a_I+b_I}{2}
  \qquad \nu\text{-almost everywhere}.
\]
Since the two coordinate marginals coincide, also $u(w)=m_J$ $\nu$-almost everywhere.

\emph{(3) No mass on interior coordinates.}  Put
\[
  a=a_I,\qquad b=b_I,\qquad m=\frac{a+b}{2}.
\]
Then
\[
  \left(1-\frac{a}{m}\right)^2
  =\left(1-\frac{b}{m}\right)^2=c_I^2,
\]
while, if $J$ contains an intermediate eigenvalue, the number
\[
  \rho_*:=
  \max_{\substack{i\in J\\a<\lambda_i<b}}
  \left(1-\frac{\lambda_i}{m}\right)^2
\]
satisfies $\rho_*<c_I^2$.  Hence, whenever $u(z)=m$,
\[
  r_I(w,z)^2
  \le
  c_I^2\bigl(w_{i_-}+w_{i_+}\bigr)
  +\rho_*\!\!\sum_{\substack{i\in J\\a<\lambda_i<b}}\!\!w_i
  \le c_I^2,
\]
and the last inequality is strict whenever an interior weight is positive.  If $J$ contains no intermediate eigenvalue, this conclusion is immediate without the term involving $\rho_*$.  Since $\widehat f_I=\log r_I\le\log c_I$ $\nu$-almost everywhere and
$\int\widehat f_I\,d\nu=\log c_I$, we must have $r_I=c_I$ $\nu$-almost everywhere.  Thus all interior weights vanish.  Combined with the positivity $w_i>0$ and $z_i>0$ for $i\in J$ supplied by \cref{lem:support-face}, this gives
\[
  J=\{i_-,i_+\}.
\]
The mean conditions $u(w)=u(z)=m$ then give endpoint weights $1/2$ in both $w$ and $z$.  Thus $\nu=\delta_{\chi_I^*}$ for every ergodic maximizing measure.

Finally, let $\nu$ be any invariant measure with
$\int\widehat f_I\,d\nu=\log c_I$.  In its ergodic decomposition, almost every component $\eta$ satisfies
$\int\widehat f_I\,d\eta\le\log c_I$ by \cref{lem:measure-bound}, while the average of these integrals equals $\log c_I$.  Hence almost every component is an ergodic maximizing measure and therefore equals $\delta_{\chi_I^*}$, giving
$\nu=\delta_{\chi_I^*}$.
\end{proof}

\section{Extension to bounded positive operators on Hilbert spaces}\label{sec:hilbert}

We now extend the finite-dimensional theory from discrete spectral weights to scalar spectral measures.

\subsection{Statement of the extension}\label{subsec:hilb-statement}

Let $\mathcal H$ be a real Hilbert space and let
\begin{equation}\label{eq:hilb-operator-bounds}
  A=A^*\in\mathcal L(\mathcal H),
  \qquad 0<mI\le A\le MI<\infty,
\end{equation}
where
\[
  m=\min\spec(A),\qquad M=\max\spec(A).
\]
For
\begin{equation}\label{eq:hilb-quadratic}
  Q(x)=\frac12\ip{Ax}{x}-\ip{b}{x},
\end{equation}
the minimizer is $x_*=A^{-1}b$.  Set $g_k=A(x_k-x_*)$ and apply BB1 in the form
\begin{equation}\label{eq:hilb-BB1}
  x_{k+1}=x_k-\alpha_k g_k,
  \qquad \alpha_0>0,
  \qquad
  \alpha_k=\frac{\norm{g_{k-1}}^2}{\ip{Ag_{k-1}}{g_{k-1}}}
  \quad(k\ge1).
\end{equation}
As before, the gradient sequence is set equal to zero after finite termination.  For $g_0\ne0$, define
\begin{equation}\label{eq:hilb-rho}
  \rho_A(g_0,\alpha_0)
  :=\limsup_{k\to\infty}
  \left(\frac{\norm{g_k}}{\norm{g_0}}\right)^{1/k}.
\end{equation}

Let $E_A$ be the projection-valued spectral measure of $A$.  For $g\ne0$, define the normalized scalar spectral measure
\begin{equation}\label{eq:hilb-spectral-measure}
  \nu_g(B)=\frac{\norm{E_A(B)g}^2}{\norm{g}^2},
  \qquad B\subset\spec(A)\ \text{Borel}.
\end{equation}
This is a Borel probability measure on $\spec(A)$.  For every bounded real-valued Borel function $\varphi$,
\begin{equation}\label{eq:hilb-functional-calculus}
  \frac{\ip{\varphi(A)g}{g}}{\norm{g}^2}
  =\int_{\spec(A)}\varphi(\lambda)\,\nu_g(d\lambda).
\end{equation}
We call
\begin{equation}\label{eq:hilb-active-support}
  K(g)=\supp\nu_g
\end{equation}
the active spectral support of $g$.  If $K\subset(0,\infty)$ is nonempty and compact, set
\begin{equation}\label{eq:hilb-cK}
  c(K)=
  \begin{cases}
    \dfrac{\max K-\min K}{\max K+\min K},& |K|\ge2,\\[1.2ex]
    0,& |K|=1.
  \end{cases}
\end{equation}
In particular,
\begin{equation}\label{eq:hilb-cA}
  \cA=c(\spec(A))=\frac{M-m}{M+m}.
\end{equation}

If $\spec(A)=\{\lambda\}$, then $A=\lambda I$ and every quadratic BB1 or BB2 trajectory terminates in at most two updates, independently of the positive first step.  In this case $\cA=0$.  Statements involving $\log c(K)$ below assume that $K$ has at least two points.

For $g_0\ne0$, the matched initialization is
\begin{equation}\label{eq:hilb-matched}
  \alpha_0^{\rm mat}=\frac{\norm{g_0}^2}{\ip{Ag_0}{g_0}}.
\end{equation}

\begin{theorem}\label{thm:hilbert-main}
Let the sequence be generated by \eqref{eq:hilb-BB1} with arbitrary $\alpha_0>0$.  Then, for every $g_0\ne0$,
\begin{equation}\label{eq:hilb-active-bound}
  \rho_A(g_0,\alpha_0)\le c(K(g_0))\le\cA.
\end{equation}
Under the matched initialization \eqref{eq:hilb-matched}, every $\gamma\in(\cA,1)$ admits a constant $C=C(A,\gamma)<\infty$ such that
\begin{equation}\label{eq:hilb-uniform-envelope}
  \norm{g_k}\le C\gamma^k\norm{g_0}
  \qquad\text{for every }g_0\ne0\text{ and every }k\ge0.
\end{equation}
No $\gamma\in(0,\cA)$ has this property.  Hence
\begin{equation}\label{eq:hilb-uniform-threshold}
  \inf\{\gamma\in(0,1):\gamma\text{ satisfies \eqref{eq:hilb-uniform-envelope}}\}=\cA.
\end{equation}
If $a<b$ are eigenvalues of $A$, $g_0\ne0$ is supported on their eigenspaces, and
\begin{equation}\label{eq:hilb-endpoint-balance}
  \norm{E_A(\{a\})g_0}^2
  =\norm{E_A(\{b\})g_0}^2,
\end{equation}
then the matched sequence satisfies
\begin{equation}\label{eq:hilb-exact-orbit}
  \norm{g_k}
  =\left(\frac{b-a}{b+a}\right)^k\norm{g_0}
  \qquad(k\ge0).
\end{equation}
In particular, if $m<M$ and both $m$ and $M$ belong to the point spectrum, then
\[
  \sup_{g_0\ne0,\,\alpha_0>0}\rho_A(g_0,\alpha_0)=\cA,
\]
and the supremum is attained by the matched balanced endpoint orbit.
\end{theorem}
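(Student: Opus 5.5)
We reduce the Hilbert-space statement to the finite-dimensional machinery by approximating the scalar spectral measure by finitely supported measures, using shrinking endpoint bands.

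\emph{Normalized dynamics.} Write $\nu_k=\nu_{g_k}$ for the normalized spectral measures. By \eqref{eq:hilb-functional-calculus} we have $\alpha_k=1/u(\nu_{k-1})$ with $u(\mu)=\int\lambda\,d\mu$. Moreover
\[
  \nu_{k+1}(d\lambda)=\frac{(1-\lambda/u(\nu_{k-1}))^2\,\nu_k(d\lambda)}{r(\nu_k,\nu_{k-1})^2},
  \qquad
  r(\mu,\zeta)^2=\int\bigl(1-\lambda/u(\zeta)\bigr)^2\,\mu(d\lambda),
\]
so $\norm{g_{k+1}}=r(\nu_k,\nu_{k-1})\norm{g_k}$. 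Supports can only shrink, so $K(g_k)\subseteq K_0:=K(g_0)$. Set $a=\min K_0$ and $b=\max K_0$.

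\emph{Upper bound \eqref{eq:hilb-active-bound}.} Fix $\delta>0$ small. Partition $K_0$ into finitely many Borel cells as follows: the bottom band $K_0\cap[a,a+\delta)$, the top band $K_0\cap(b-\delta,b]$, and cells of diameter at most $\delta$ in between. Within a single cell the multiplier $(1-\lambda/u)^2$ varies by a relative amount $O(\delta)$, uniformly for $u\in[a,b]$. The exception is near the zero of the multiplier, but there the contribution to the norm only helps. To avoid that issue entirely, the cleaner route is to run the coboundary argument directly on measures rather than compare with a finite model. Define
\[
  h(\mu)=-\tfrac{b}{2(a+b)}\log\mu([a,a+\delta))-\tfrac{a}{2(a+b)}\log\mu((b-\delta,b]).
\]
Both band masses are positive initially, because $a,b\in\supp\nu_0$, and they stay positive unless some $\nu_k$ loses a band. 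Losing a band is impossible except through an atom being annihilated, which can happen only once per band and then leaves a smaller support with a smaller constant. For $u\in(a+\delta,b-\delta)$ the band masses update by a factor bounded above by $\bigl((u-a)/u\bigr)^2$ and $\bigl((b-u)/u\bigr)^2$ respectively. One must use the worst point in each band, which is $\lambda=a$ for the lower band and $\lambda=b$ for the upper band; this gives an inequality in the right direction. Hence
\[
  \log r\le \log c(K_0)+h(\nu_{k+1})-h(\nu_k)+O(\delta),
\]
with the $O(\delta)$ term appearing only when $u$ falls inside a band. If $u(\nu_{k-1})$ lies in a band, we bound $r$ crudely but note that this forces most mass into that band. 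I expect this to be the main obstacle. The fallback is the semi-uniform principle (Proposition~\ref{prop:abstract-semiuniform}) applied to the compact space of pairs of probability measures on $K_0$ with the weak topology. Invariant measures of this map are controlled by \cref{lem:measure-bound}-type reasoning: by Poincar\'e recurrence $h$ returns close to its value, so the telescoping term vanishes along recurrence times, and one needs continuity of band masses at recurrent points. This is ensured by choosing band edges $\delta$ that are not atoms of the relevant measures, which is possible for all but countably many $\delta$. Telescoping then gives $\rho_A\le c(K_0)e^{O(\delta)}$, and letting $\delta\to0$ yields the bound. The inequality $c(K_0)\le c_A$ follows from monotonicity of $(b-a)/(b+a)$.

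\emph{Uniform envelope \eqref{eq:hilb-uniform-envelope}.} Under matched initialization the orbit starts at $(\nu_0,\nu_0)$. We apply Proposition~\ref{prop:abstract-semiuniform} on the compact metric space $\cP(\spec A)^2\sqcup\{\theta\}$, with $f=\log r$ extended by $-\infty$ on the singular set. The map is continuous off the set where $r=0$. The invariant-measure bound $\int f\le\log c_A$ is the Hilbert analogue of \cref{lem:measure-bound}, with $h$ defined using the global endpoints $m,M$ and bands, and proved by the same recurrence and coboundary argument. This yields a constant $B_\eta$ independent of $g_0$, hence \eqref{eq:hilb-uniform-envelope} for every $\gamma>c_A$.

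\emph{Sharpness.} If $m$ and $M$ are eigenvalues, the exact orbit \eqref{eq:hilb-exact-orbit} follows verbatim from Proposition~\ref{prop:endpoint-orbit}. Otherwise, to show that no $\gamma<c_A$ works, take $g_0$ with balanced mass in the spectral bands $[m,m+\delta]$ and $[M-\delta,M]$, for instance $E_A$ of those bands applied to suitable vectors. By continuity of finitely many steps in the data, $\norm{g_k}\ge(c_A-\epsilon)^k\norm{g_0}$ for $k\le N$ once $\delta$ is small. This contradicts any uniform bound $C\gamma^k$ with $\gamma<c_A$ by taking $N$ large. The final statement, that the supremum is attained, then combines the upper bound with the exact orbit.
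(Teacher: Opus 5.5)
\textbf{Overall route.} Your fallback argument follows the paper's architecture: two-step dynamics on $\cP(K)\times\cP(K)$ with a terminal point, the semi-uniform principle of \cref{prop:abstract-semiuniform}, an invariant-measure bound via recurrence and an endpoint-band coboundary whose band edges avoid atoms, and balanced vectors in shrinking endpoint bands for sharpness. The sharpness and exact-orbit parts are fine.

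\textbf{The gap: the invariant-measure bound.} You define $h$ with bands at the global endpoints ($m,M$, or $\min K_0,\max K_0$). Invariant measures need not charge these endpoints. If $\min K<a'<b'$ lie in $K$ and $\mu=\tfrac12\delta_{a'}+\tfrac12\delta_{b'}$, then $\delta_{(\mu,\mu)}$ is invariant with finite integral. Its lower band mass vanishes for small $\delta$, so $h\equiv+\infty$ and nothing telescopes. The coboundary needs both band masses positive, with fixed band endpoints, along the entire forward orbit of the recurrent generic point.

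The missing ingredient is the Hilbert analogue of \cref{lem:support-face}, namely \cref{lem:hilb-persistent-support}. For ergodic $\Pi$ with finite integral, there is a compact $S$ with at least two points such that $\supp\nu=\supp\zeta=S$ along $\Pi$-almost every forward orbit. The paper proves this in four steps:
\begin{itemize}
\item Since $\nu^+\ll\nu$, one has $b_U\circ\widehat T_K\le b_U$ for $b_U=\ind_{\{\nu(U)>0\}}$, so each $b_U$ is almost surely constant by invariance and ergodicity.
\item Intersecting over a countable base fixes $\supp\nu$.
\item Equality of the two coordinate marginals transfers this to $\zeta$.
\item Regularity excludes a singleton.
\end{itemize}
The bands must then sit at $\min S$ and $\max S$, and one concludes with $\log c(S)\le\log c(K)$. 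Continuity of band masses along recurrence times should be taken relative to $S$, as in \cref{lem:hilb-restricted-weak}.

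\textbf{The direct route misplaces the difficulty.} The case of $u$ lying in a band is harmless. If $u\in[a,a+\delta]$, then $\sup_{\lambda\in[a,a+\delta]}|1-\lambda/u|\le\delta/a$. Hence $\Gamma_{\delta,S}(u)\le(\delta/a)^p\bigl((b-a)/a\bigr)^q$, with $p,q$ as in \eqref{eq:hilb-pq}, and this is below $c(S)$ once $\delta$ is small. The paper simply uses $C_{\delta,S}=\sup_{u\in[a,b]}\Gamma_{\delta,S}(u)\to c(S)$.

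The real obstruction is that $h(\nu_n)$ need not be $o(n)$ along a single trajectory. For the generic two-point orbits of \cref{prop:2d-instab}, $\limsup_k|\log\zeta_k|^{1/k}=\sqrt2$ and $h\ge\tfrac12\min\{p,q\}|\log\zeta|$. Indeed, since $p+q=1$, the telescoped inequality is equivalent to
$\norm{E_A(L)g_n}^p\norm{E_A(U)g_n}^q\le C_\delta^{\,n-1}\norm{E_A(L)g_1}^p\norm{E_A(U)g_1}^q$
for the two bands $L,U$. This bounds a weighted geometric mean of the endpoint-band components, not $\norm{g_n}$.

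So ``telescoping then gives $\rho_A\le c(K_0)e^{O(\delta)}$'' is not valid for an individual trajectory. That bound comes only from the semi-uniform estimate applied to the orbit segment starting at $\chi_1=(\nu_1,\nu_0)\in X_{K_0}$. This also makes your unproved claim that a band can be lost only once unnecessary; that claim is doubtful anyway, since annihilations of successive lowest atoms can cascade.
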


A purely continuous-spectrum trajectory with root factor $\cA$ is given in \cref{rem:hilbert-abstract-maximizer}.

\subsection{Proof architecture and principal consequences}\label{subsec:hilb-consequences}

The Hilbert-space proof follows the finite-dimensional architecture in scalar-spectral-measure form: ergodic decomposition and a countable-base argument identify a fixed persistent support, shrinking endpoint bands replace endpoint coordinates in the coboundary certificate, and compactification together with semi-uniformity yields the trajectory and uniform-envelope bounds.  The complete proof is given in \cref{app:hilbert-proofs}.

OpenAI language models assisted Shutai Yang in preparing an initial draft of this extension from the finite-dimensional proof architecture that he had developed.

For Hilbert-space weighted rules, let $\Psi:\spec(A)\to(0,\infty)$ be Borel and impose the uniform two-sided bound
\begin{equation}\label{eq:hilb-weight-bounds}
  0<\psi_-\le\Psi(\lambda)\le\psi_+<\infty
  \qquad(\lambda\in\spec(A)).
\end{equation}
Then the conjugating operator $\Psi(A)^{1/2}$ is boundedly invertible.
Consider
\begin{equation}\label{eq:hilb-weighted-rule}
  \alpha_k^\Psi
  =\frac{\ip{\Psi(A)g_{k-1}}{g_{k-1}}}
         {\ip{\Psi(A)Ag_{k-1}}{g_{k-1}}},
  \qquad k\ge1.
\end{equation}
Under \eqref{eq:hilb-weight-bounds}, the operator $\Psi(A)A$ is uniformly positive, so the quotient is well defined whenever $g_{k-1}\ne0$.  Its matched initialization is
\[
  \alpha_0^{\Psi,\rm mat}
  =\frac{\ip{\Psi(A)g_0}{g_0}}
         {\ip{\Psi(A)Ag_0}{g_0}}.
\]

\begin{corollary}\label{cor:hilb-weighted}
Assume \eqref{eq:hilb-weight-bounds} and use \eqref{eq:hilb-weighted-rule} for $k\ge1$.  For every $g_0\ne0$ and $\alpha_0>0$, the weighted trajectory has root factor at most $c(K(g_0))\le\cA$.

Under the matched weighted initialization, its optimal uniform-envelope threshold is $\cA$.  More precisely, for $\gamma\in(\cA,1)$, if $C(A,\gamma)$ is a BB1 envelope constant in \eqref{eq:hilb-uniform-envelope}, then
\begin{equation}\label{eq:hilb-weighted-prefactor-transfer}
  \norm{g_k}
  \le \sqrt{\frac{\psi_+}{\psi_-}}\,
      C(A,\gamma)\gamma^k\norm{g_0}.
\end{equation}
If $a<b$ are eigenvalues, $g_0\ne0$ is supported on their eigenspaces, and
\begin{equation}\label{eq:hilb-weighted-balance}
  \Psi(a)\norm{E_A(\{a\})g_0}^2
  =\Psi(b)\norm{E_A(\{b\})g_0}^2,
\end{equation}
then the matched weighted sequence satisfies the exact identity \eqref{eq:hilb-exact-orbit}.  For $\Psi(\lambda)=\lambda$ this is BB2, with matched initialization $\alpha_0=\ip{Ag_0}{g_0}/\norm{Ag_0}^2$ and endpoint balance $a\norm{E_A(\{a\})g_0}^2=b\norm{E_A(\{b\})g_0}^2$.
\end{corollary}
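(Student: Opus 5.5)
The approach is the Hilbert-space analogue of \cref{lem:spectral-conjugacy} and \cref{cor:weighted-envelope}, reducing everything to \cref{thm:hilbert-main}. Set $S=\Psi(A)^{1/2}$, defined by Borel functional calculus. By \eqref{eq:hilb-weight-bounds}, $S$ is self-adjoint and commutes with $A$. It also satisfies $\psi_-^{1/2}I\le S\le\psi_+^{1/2}I$, so it is boundedly invertible with $\norm{S}\norm{S^{-1}}\le\sqrt{\psi_+/\psi_-}$. Put $\widetilde g_k=Sg_k$. Exactly as in the finite-dimensional proof, commutation gives $\widetilde g_1=(I-\alpha_0A)\widetilde g_0$ and $\widetilde g_{k+1}=(I-\alpha_k^\Psi A)\widetilde g_k$. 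Moreover, $S^2=\Psi(A)$ gives $\alpha_k^\Psi=\norm{\widetilde g_{k-1}}^2/\ip{A\widetilde g_{k-1}}{\widetilde g_{k-1}}$. Hence $\{\widetilde g_k\}$ is a BB1 gradient sequence for $A$ with the same first step. Termination is also preserved, since $S$ is injective.

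Next I check that the active support is unchanged, $K(\widetilde g_0)=K(g_0)$. For Borel $B$, $E_A(B)$ commutes with $S$, so
\[
  \norm{E_A(B)\widetilde g_0}^2=\int_B\Psi\,d\norm{E_A(\cdot)g_0}^2.
\]
Since $\psi_-\le\Psi\le\psi_+$, the scalar spectral measures of $g_0$ and $\widetilde g_0$ are mutually absolutely continuous. Mutually absolutely continuous measures have the same closed support. The norm comparison
\[
  \frac{\norm{g_k}}{\norm{g_0}}\le\norm{S^{-1}}\norm{S}\frac{\norm{\widetilde g_k}}{\norm{\widetilde g_0}}
\]
and its reverse show that the root factors coincide. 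The first assertion then follows from \eqref{eq:hilb-active-bound}.

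Under matched weighted initialization, $\alpha_0^{\Psi,\rm mat}=\norm{\widetilde g_0}^2/\ip{A\widetilde g_0}{\widetilde g_0}$, which is the matched BB1 step for $\widetilde g_0$. Applying \eqref{eq:hilb-uniform-envelope} to $\widetilde g_k$ gives
\[
  \norm{g_k}\le\norm{S^{-1}}\norm{\widetilde g_k}\le\norm{S^{-1}}C\gamma^k\norm{S}\norm{g_0}.
\]
This is \eqref{eq:hilb-weighted-prefactor-transfer}, with prefactor $\norm{S}\norm{S^{-1}}\le\sqrt{\psi_+/\psi_-}$.

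For the exact orbit, $E_A(\{a\})\widetilde g_0=\Psi(a)^{1/2}E_A(\{a\})g_0$, and similarly at $b$. So \eqref{eq:hilb-weighted-balance} is exactly \eqref{eq:hilb-endpoint-balance} for $\widetilde g_0$, and $\widetilde g_0$ is still supported on the two eigenspaces. Hence $\widetilde g_k$ satisfies \eqref{eq:hilb-exact-orbit}. On these eigenspaces $S$ acts by the scalars $\Psi(a)^{1/2},\Psi(b)^{1/2}$. Because the components are multiplied by $c_{a,b}^k$ and $(-1)^kc_{a,b}^k$ as in \eqref{eq:endpoint-component-orbit}, each component of $g_k$ is scaled by $c_{a,b}^k$ in modulus. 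Thus $\norm{g_k}=c_{a,b}^k\norm{g_0}$ exactly, not merely up to constants.

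Taking $a,b$ close to $m,M$ does not immediately yield optimality of the threshold, because $m,M$ may be absent from the point spectrum. Instead, the lower bound on the threshold transfers directly: a weighted envelope with rate $\gamma<\cA$ and constant $C'$ would give a BB1 envelope with rate $\gamma$ and constant $C'\sqrt{\psi_+/\psi_-}$. This follows by applying the inverse conjugacy to any matched BB1 trajectory, since $S^{-1}$ is a bijection on nonzero vectors. \Cref{thm:hilbert-main} excludes such an envelope. For BB2, $\Psi(\lambda)=\lambda$ satisfies \eqref{eq:hilb-weight-bounds} with $\psi_\pm=M,m$, and the formulas specialize directly. The only step needing care is the support equality, which mutual absolute continuity handles.
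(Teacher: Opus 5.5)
Your proposal is correct and follows essentially the same route as the paper. You conjugate by $S=\Psi(A)^{1/2}$ to get a BB1 sequence with the same first step, use mutual absolute continuity of $\nu_{Sg}$ and $\nu_g$ for the support equality, and transfer the root factors, the envelope prefactor, the nonexistence of envelopes below $\cA$, and the balanced exact orbit through $\sqrt{\psi_-}\norm{g}\le\norm{Sg}\le\sqrt{\psi_+}\norm{g}$; your explicit inverse-conjugacy argument for the lower bound simply spells out what the paper states in one sentence.
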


\begin{corollary}\label{cor:hilb-uniform-family}
Let $\{\mathcal H_h,A_h\}_{h\in\mathcal I}$ be any family of real Hilbert spaces and bounded self-adjoint operators satisfying
\begin{equation}\label{eq:uniform-family-bounds}
  mI_h\le A_h\le MI_h
  \qquad(h\in\mathcal I)
\end{equation}
for fixed $0<m\le M<\infty$.  Set $c_{m,M}=(M-m)/(M+m)$.  Under matched initialization, every $\gamma\in(c_{m,M},1)$ admits a constant $C=C(m,M,\gamma)<\infty$, independent of $h$, the initial vector, and $k$, such that every BB1 gradient sequence satisfies
\begin{equation}\label{eq:uniform-family-envelope}
  \norm{g_k^{(h)}}\le C\gamma^k\norm{g_0^{(h)}}
  \qquad(h\in\mathcal I,\ g_0^{(h)}\ne0,\ k\ge0).
\end{equation}
The same conclusion holds for BB2 under its matched initialization, with a possibly larger constant depending only on $m$, $M$, and $\gamma$.
\end{corollary}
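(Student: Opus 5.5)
The plan is to reduce the whole family to a single canonical operator that depends only on $m$ and $M$, and then apply \cref{thm:hilbert-main} once. The point is that a BB1 trajectory is determined by the normalized scalar spectral measure $\nu_{g_0}$ of its initial gradient; the ambient space $\mathcal H_h$ and the operator $A_h$ play no other role. The constant from that one application then works for every member of the family.

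\emph{Step 1: the trajectory depends only on $\nu_{g_0}$.} By induction I will show that, for a matched trajectory in any $(\mathcal H_h,A_h)$, we have $g_k=p_k(A_h)g_0$. Here $p_k(\lambda)=\prod_{j<k}(1-\alpha_j\lambda)$, and the steps $\alpha_j$ are functionals of $\nu:=\nu_{g_0}$ alone.
\begin{itemize}
\item Base case: by \eqref{eq:hilb-functional-calculus}, $\alpha_0^{\rm mat}=1/\int\lambda\,\nu(d\lambda)$.
\item Inductive step: $\norm{g_k}^2=\norm{g_0}^2\int p_k^2\,d\nu$ and $\ip{A_hg_k}{g_k}=\norm{g_0}^2\int\lambda p_k^2\,d\nu$. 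Hence $\alpha_{k+1}=\int p_k^2\,d\nu\big/\int\lambda p_k^2\,d\nu$ is again a functional of $\nu$. Termination occurs exactly when $\int p_k^2\,d\nu=0$, which is also determined by $\nu$.
\end{itemize}
Consequently $\norm{g_k}/\norm{g_0}=(\int p_k^2\,d\nu)^{1/2}$ is a function of $\nu$ and $k$ only, where $\nu$ is a Borel probability measure on $\spec(A_h)\subseteq[m,M]$. This is the step requiring the most care. In particular, termination and the extension by zero must be handled consistently with the convention of \cref{thm:hilbert-main}.

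\emph{Step 2: the canonical operator.} Let $\mathcal P$ be the set of Borel probability measures on $[m,M]$. Define the Hilbert direct sum $\mathcal H_\star=\bigoplus_{\nu\in\mathcal P}L^2([m,M],\nu)$, which may be non-separable; that is harmless. Let $A_\star$ act as multiplication by $\lambda$ on each summand. Then $A_\star$ is bounded and self-adjoint with $mI\le A_\star\le MI$, so $c_{A_\star}\le c_{m,M}$. The vector $g=\mathbf 1$ in the $\nu$-summand has scalar spectral measure exactly $\nu$. If $m=M$, every operator in the family is $mI$ and matched BB1 terminates after one step, so any $C\ge1$ works. Otherwise, fix $\gamma\in(c_{m,M},1)$; then $\gamma\in(c_{A_\star},1)$. \cref{thm:hilbert-main} gives $C_\star=C(A_\star,\gamma)$ with $\norm{g_k}\le C_\star\gamma^k\norm{g_0}$ for every matched trajectory in $\mathcal H_\star$. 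Since $A_\star$ is built canonically from $(m,M)$, I set $C(m,M,\gamma):=C_\star$.

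\emph{Step 3: transfer to the family.} Take any $h$ and $g_0^{(h)}\ne0$, and let $\nu=\nu_{g_0^{(h)}}$. Consider the matched trajectory in $\mathcal H_\star$ started from $\mathbf 1$ in the $\nu$-summand, which also has spectral measure $\nu$. By Step 1, its normalized norms coincide with those of the $h$-trajectory for every $k$. Hence \eqref{eq:uniform-family-envelope} holds with the same constant $C$.

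\emph{Step 4: BB2.} For BB2 I use the conjugacy $\widetilde g_k=A_h^{1/2}g_k$, as in \cref{cor:hilb-weighted} with $\Psi(\lambda)=\lambda$. This turns a matched BB2 sequence into a matched BB1 sequence for $A_h$. Step 3 applies to $\widetilde g_k$. Undoing the conjugacy uses $m\le\Psi\le M$ on $[m,M]$ and costs the factor $\sqrt{M/m}$ from \eqref{eq:hilb-weighted-prefactor-transfer}. This gives the constant $\sqrt{M/m}\,C(m,M,\gamma)$, which still depends only on $m$, $M$, and $\gamma$.
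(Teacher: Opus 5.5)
Your proof is correct, but it takes a genuinely different, more modular route than the paper.

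\textbf{The paper's route.} The paper never leaves the abstract dynamics. Every scalar spectral measure of every $A_h$ lies in $\cP([m,M])$. By \cref{lem:hilb-recurrence} with ambient set $K=[m,M]$, each matched nonterminal trajectory is therefore an orbit segment of the single map $\widehat T_{[m,M]}$. \Cref{lem:hilb-semiuniform} then supplies one constant $B_{[m,M],\eta}$ for all $h$, giving $C=\max\{1,\exp(B_{[m,M],\eta})\}$. BB2 is handled exactly as in your Step 4.

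\textbf{Your route.} Your Step 1 is essentially the content of \cref{lem:hilb-recurrence}: the normalized norms are functionals of $\nu_{g_0}$ alone. Instead of reopening the ergodic machinery, you realize every $\nu\in\cP([m,M])$ by a single vector of one universal multiplication operator $A_\star$. You then invoke \cref{thm:hilbert-main} purely as a black box.

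\textbf{What each buys.} Your route gains modularity and a mild structural bonus. It identifies one worst-case operator: $A_\star$ itself satisfies $mI\le A_\star\le MI$, so the optimal uniform constant over all such operators is exactly the optimal envelope constant of $A_\star$. The argument would also transfer verbatim to any single-operator result stated in terms of scalar spectral measures.

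The price is the auxiliary space. The direct sum genuinely has to be non-separable, because you must realize every $\delta_\lambda$. That requires eigenvectors for uncountably many distinct $\lambda$, which are mutually orthogonal. This is legitimate only because \cref{thm:hilbert-main} is stated for arbitrary real Hilbert spaces without a separability assumption, so it is right to flag it explicitly, as you do.

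The paper's route is shorter, avoids the auxiliary construction, and makes the constant explicit. Unwinding the proof of \cref{thm:hilbert-main} for $A_\star$, whose spectrum is $[m,M]$, yields essentially the same constant $\max\{1,\exp(B_{[m,M],\eta})\}$ anyway.
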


For every rule covered above, the argument in \cref{cor:error-objective} applies to its actual Hilbert-space gradient sequence.  If $g_0\ne0$, $e_k=x_k-x_*$, and $\Delta_k=Q(x_k)-Q(x_*)$, then
\[
\begin{gathered}
  g_k=Ae_k,\qquad
  \Delta_k=\frac12\ip{A^{-1}g_k}{g_k},\\
  m\norm{e_k}\le\norm{g_k}\le M\norm{e_k},\qquad
  \frac1{2M}\norm{g_k}^2\le\Delta_k\le\frac1{2m}\norm{g_k}^2.
\end{gathered}
\]
Hence the error norm and the actual gradient norm have the same root factor, and the objective-gap root factor is the square of this common value.  Proofs of \cref{thm:hilbert-main,cor:hilb-weighted,cor:hilb-uniform-family} are included in \cref{app:hilbert-proofs}.

\section{Nonlinear localization at a nondegenerate minimizer}\label{sec:nonlinear}

We now localize the sharp quadratic threshold near a nondegenerate minimizer.

Earlier frozen-quadratic analyses use stronger smoothness.  Liu and Dai sketched the pure-BB1 comparison under $C^3$ regularity in the proof of their hybrid-method theorem \cite[proof of Theorem~4.2]{LiuDai2001}.  Dai and Liao recorded the resulting $R$-linear conclusion for BB1 under the same regularity, conditional on convergence to a stationary point with positive-definite Hessian \cite[Sec.~3]{DaiLiao2002}.  Complete fixed-horizon analyses were later proved for cyclic BB1 in finite dimensions and for iteration-dependent BB1/BB2 choices in Hilbert space under a locally Lipschitz Hessian; their comparison lemmas control the nonlinear--quadratic discrepancy by $O(\norm{e_0}^2)$ until the frozen model reaches the prescribed contraction threshold \cite[Lemma~2.2 and Theorem~2.3]{DaiHagerSchittkowskiZhang2006} and \cite[Lemma~2.2 and Theorem~2.2]{AzmiKunisch2022}.

Here strict Fr\'echet differentiability of the gradient is enough.  Uniform two-point linearization and a pre-hitting lower bound for the frozen secants yield an $o(\norm{e_0})$ fixed-horizon comparison; the strict gap between the quadratic threshold and the prescribed envelope rate then permits restarts in uniformly bounded blocks.

The nonlinear frozen-model strategy and the block-restart framework were formulated by Shutai Yang; OpenAI language models assisted him in reorganizing parts of the proof and proposing candidate derivations for selected technical estimates.

\subsection{Strict linearization and the local BB iterations}\label{subsec:nonlinear-setting}

Let $\mathcal H$ be a real Hilbert space, let $U\subset\mathcal H$ be open, and let $F:U\to\R$ be continuously Fr\'echet differentiable with gradient $G:U\to\mathcal H$.  Fix $x_*\in U$ with $G(x_*)=0$.  Assume that there exists a bounded, self-adjoint, uniformly positive operator $A_*$.  Define its exact spectral endpoints by
\begin{equation}\label{eq:nonlinear-A-bounds}
\begin{aligned}
  m_*&:=\min\spec(A_*)=\norm{A_*^{-1}}^{-1}>0,\\
  M_*&:=\max\spec(A_*)=\norm{A_*}<\infty.
\end{aligned}
\end{equation}
Thus $0<m_*\le M_*$ and $m_*I\le A_*\le M_*I$.  Since $U$ is open, choose $r_0>0$ such that the closed ball
\[
  B_{r_0}(x_*):=\{x\in\mathcal H:\norm{x-x_*}\le r_0\}
\]
is contained in $U$; throughout this section, $B_r(x_*)$ denotes this closed ball.  For $0<r\le r_0$, set
\begin{equation}\label{eq:strict-modulus}
  \omega(r)
  :=\sup_{\substack{x,y\in B_r(x_*)\\x\ne y}}
  \frac{\norm{G(x)-G(y)-A_*(x-y)}}{\norm{x-y}}.
\end{equation}
The function $\omega$ is nondecreasing.  Assume that
\begin{equation}\label{eq:strict-linearization}
  \omega(r)\longrightarrow0\qquad(r\downarrow0).
\end{equation}
This two-point condition is the strict Fr\'echet differentiability of $G$ at $x_*$; in particular, it implies ordinary Fr\'echet differentiability there with derivative $A_*$ and makes $\omega(r)$ finite for all sufficiently small $r$.  After decreasing $r_0$ if necessary, assume throughout that $\omega(r_0)<m_*$.  If $F\in C^2$ near $x_*$ and $F''$ is continuous at $x_*$ in operator norm, then \eqref{eq:strict-linearization} holds with $A_*=F''(x_*)$; see \cref{cor:C2-nonlinear} below.

For $s,y\in\mathcal H$ with $s\ne0$ and $\ip{s}{y}>0$, define
\begin{equation}\label{eq:two-BB-functionals}
  \Phi_1(s,y)=\frac{\norm{s}^2}{\ip{s}{y}},
  \qquad
  \Phi_2(s,y)=\frac{\ip{s}{y}}{\norm{y}^2}.
\end{equation}
The subscripts refer to BB1 and BB2.  Fix one rule $\ell\in\{1,2\}$ throughout the iteration and choose distinct initial points $x_{-1},x_0$.  At iteration $k$, first test whether $G(x_k)=0$.  If so, terminate and keep all subsequent iterates equal to $x_k$.  Otherwise set
\begin{equation}\label{eq:nonlinear-BB}
  s_{k-1}=x_k-x_{k-1},
  \qquad
  y_{k-1}=G(x_k)-G(x_{k-1}),
  \qquad
  \alpha_k=\Phi_\ell(s_{k-1},y_{k-1}),
\end{equation}
and update
\begin{equation}\label{eq:nonlinear-BB-update}
  x_{k+1}=x_k-\alpha_kG(x_k),
  \qquad k\ge0.
\end{equation}
The local results below guarantee that every quotient actually invoked is well defined and positive.  For the frozen quadratic model $G(x)=A_*(x-x_*)$, write $g_k=A_*(x_k-x_*)$.  The initial secant $s_{-1}=x_0-x_{-1}$ is arbitrary, subject only to $s_{-1}\ne0$.  If $g_0=0$, the stopping test terminates the method before any quotient is evaluated.  Otherwise, since $y_{-1}=A_*s_{-1}$, the initial secant determines the first quadratic BB step directly:
\[
  \alpha_0=
  \begin{cases}
    \displaystyle\frac{\norm{s_{-1}}^2}{\ip{s_{-1}}{A_*s_{-1}}},
      & \ell=1,\\[1.2ex]
    \displaystyle\frac{\ip{s_{-1}}{A_*s_{-1}}}{\norm{A_*s_{-1}}^2},
      & \ell=2.
  \end{cases}
\]
For every later secant generated by an executed update, namely for $k\ge1$ before termination,
\[
  s_{k-1}=x_k-x_{k-1}=-\alpha_{k-1}g_{k-1},
  \qquad
  y_{k-1}=A_*s_{k-1}.
\]
The zero-homogeneity of the two secant quotients then gives
\[
\begin{aligned}
  \Phi_1(s_{k-1},y_{k-1})
  &=\frac{\norm{g_{k-1}}^2}{\ip{A_*g_{k-1}}{g_{k-1}}},\\
  \Phi_2(s_{k-1},y_{k-1})
  &=\frac{\ip{A_*g_{k-1}}{g_{k-1}}}{\norm{A_*g_{k-1}}^2}.
\end{aligned}
\]
Thus the arbitrary initial secant enters through the first secant quotient, whereas every subsequently generated secant yields exactly the delayed BB1 or BB2 Rayleigh rule used in the quadratic sections.

Write
\begin{equation}\label{eq:local-sharp-constant}
  \kappa_*:=\kappa(A_*)=\frac{M_*}{m_*},
  \qquad
  c_*:=\frac{M_*-m_*}{M_*+m_*}
  =\frac{\kappa_*-1}{\kappa_*+1}.
\end{equation}
For a nonnegative scalar sequence $\{z_k\}$ and $r>0$, define
\begin{equation}\label{eq:generic-root-factor}
  \operatorname{rf}(z;r)
  :=\limsup_{k\to\infty}\left(\frac{z_k}{r}\right)^{1/k}.
\end{equation}
For any distinct initial pair, set
\[
  R_0=\max\{\norm{x_{-1}-x_*},\norm{x_0-x_*}\}>0
\]
and use
\begin{equation}\label{eq:nonlinear-root-factor-definitions}
  \rho_e=\operatorname{rf}(\norm{x_k-x_*};R_0),
  \qquad
  \rho_g=\operatorname{rf}(\norm{G(x_k)};R_0).
\end{equation}
Once the objective gap is nonnegative on a tail, define
\begin{equation}\label{eq:nonlinear-objective-root-factor}
  \rho_F
  :=\limsup_{k\to\infty}
  \left(\frac{F(x_k)-F(x_*)}{R_0^2}\right)^{1/k}
\end{equation}
using any such tail.  A finitely terminating sequence is assigned root factor zero.

\begin{theorem}\label{thm:nonlinear-localization}
Assume \eqref{eq:nonlinear-A-bounds}--\eqref{eq:strict-linearization}, fix $\ell\in\{1,2\}$, and use the corresponding pure BB$\ell$ rule.  For every
\begin{equation}\label{eq:gamma-above-cstar}
  \gamma\in(c_*,1)
\end{equation}
there exist radii $0<r_\gamma\le\overline r_\gamma$, and constants
$C_\gamma,\widetilde C_\gamma,C_\gamma'<\infty$, depending only on the fixed local data $A_*$, $r_0$, and $\omega|_{(0,r_0]}$, on $\gamma$, and on the chosen rule $\ell$, with the following property.  Whenever
\begin{equation}\label{eq:local-initial-pair}
  x_{-1},x_0\in B_{r_\gamma}(x_*),
  \qquad x_{-1}\ne x_0,
\end{equation}
the iteration is well defined: at each index, either the stopping test terminates the method or the quotient in \eqref{eq:nonlinear-BB} is defined and positive.  The resulting iterates satisfy
\begin{equation}\label{eq:nonlinear-explicit-ball}
  x_k\in B_{\overline r_\gamma}(x_*)
  \qquad(k\ge-1)
\end{equation}
and
\begin{align}
  \norm{x_k-x_*}
  &\le C_\gamma\gamma^k
  \max\{\norm{x_{-1}-x_*},\norm{x_0-x_*}\},
  \label{eq:nonlinear-error-envelope}\\
  \norm{G(x_k)}
  &\le \widetilde C_\gamma\gamma^k
  \max\{\norm{x_{-1}-x_*},\norm{x_0-x_*}\}
  \label{eq:nonlinear-gradient-envelope}
\end{align}
for every $k\ge0$.  The larger radius $\overline r_\gamma$ may be chosen so that $x_*$ is the unique minimizer of $F$ in $B_{\overline r_\gamma}(x_*)$, and
\begin{equation}\label{eq:nonlinear-value-envelope}
  0\le F(x_k)-F(x_*)
  \le C_\gamma'\gamma^{2k}
  \max\{\norm{x_{-1}-x_*},\norm{x_0-x_*}\}^2.
\end{equation}

For every well-defined trajectory generated by this fixed rule and converging to $x_*$,
\begin{equation}\label{eq:nonlinear-root-bounds}
  \rho_e\le c_*,\qquad
  \rho_g\le c_*,\qquad
  \rho_F\le c_*^2.
\end{equation}
For the fixed rule, the threshold is sharp over the class of objectives satisfying the hypotheses above with the prescribed exact derivative endpoints $m_*$ and $M_*$: no smaller nonnegative number can replace $c_*$.
\end{theorem}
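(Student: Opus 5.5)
The plan is to compare the nonlinear iteration with the frozen quadratic BB$\ell$ dynamics for $A_*$ over a fixed finite horizon. The uniform Hilbert-space envelope of \cref{thm:hilbert-main} and \cref{cor:hilb-weighted} then transfers with a small loss, and a restart argument iterates that loss-free in the rate. Fix $\gamma\in(c_*,1)$ and an intermediate $\gamma_1\in(c_*,\gamma)$.

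\emph{Quadratic step.} The quadratic theorems are stated for matched initialization, but here the first step $\alpha_0$ comes from an arbitrary secant and only lies in $[1/M_*,1/m_*]$. I first need a uniform envelope for the frozen model with such a first step: $\norm{g_k}\le C_1\gamma_1^k\norm{g_0}$. This should follow from the semi-uniform principle (\cref{prop:abstract-semiuniform}, in its Hilbert form from the appendix), applied from the unrestricted state $(w_1,w_0)$ exactly as in the proof of \cref{thm:main}. The first factor $\norm{g_1}/\norm{g_0}\le \max|1-\alpha_0\lambda|\le \kappa_*$ is uniformly bounded. From this I choose a horizon $N$ with $C_1\gamma_1^N\kappa_*^{2}\le \tfrac14\gamma^N$. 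The extra slack will absorb the passage between $\norm{g}$ and $\norm{e}$, and between $e_{-1}$ and $e_0$.

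\emph{Finite-horizon comparison.} This is the main obstacle. Starting from a pair $(x_{-1},x_0)$ in a small ball, let $\widetilde x_k$ be the frozen iterates driven by the same initial secant step. I must show that $\max_{k\le N}\norm{x_k-\widetilde x_k}=o(R_0)$ uniformly as $R_0\to0$, where $o(\cdot)$ depends only on $\omega$, $A_*$, and $N$. Strict differentiability gives $y_{k-1}=A_*s_{k-1}+\epsilon_k$ with $\norm{\epsilon_k}\le\omega(r)\norm{s_{k-1}}$, so $\Phi_\ell(s,y)$ is a perturbation of the Rayleigh quotient with relative error $O(\omega(r))$. This uses $\omega(r_0)<m_*$, which keeps $\ip{s}{y}\ge(m_*-\omega)\norm{s}^2>0$ and makes every invoked quotient well defined and positive. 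The delicate point is that the secant $s_{k-1}=-\alpha_{k-1}G(x_{k-1})$ may be tiny compared to $R_0$, since the frozen model can nearly terminate. However, the quotient error is relative to $\norm{s}$, so the resulting step error is multiplied by $\norm{G(x_k)}$. I would propagate the inductive bound $\norm{x_k-\widetilde x_k}\le \delta_k(r)R_0$, with $\delta_k\to0$, through the Lipschitz dependence of the update on $(\alpha_k,G(x_k))$. Together with $\norm{G(x_k)-A_*(x_k-x_*)}\le\omega(r)\norm{x_k-x_*}$, this gives the bound after at most $N$ steps with constants like $(1+2\kappa_*)^N$. Near-termination is harmless: if the frozen gradient becomes small, the true one is $o(R_0)$ as well, and the estimate only needs to be of size $\gamma^N R_0$. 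This is the ``pre-hitting'' mechanism, and I would handle it by stopping the comparison at the first index where the frozen norm drops below $\tfrac14\gamma^N R_0$.

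\emph{Blocks and root factors.} Choosing $r_\gamma$ small, each block of $N$ steps maps a pair with maximal error $R$ into a pair with maximal error at most $\tfrac12\gamma^{N}R\cdot 2\le\gamma^N R$, and it stays in a ball $B_{\overline r_\gamma}$ with $\overline r_\gamma\approx C R_0$. Restarting at the end of each block, with the last two iterates as the new pair, gives \eqref{eq:nonlinear-error-envelope}, with intra-block growth bounded by a constant. The gradient bound follows from $\norm{G(x)}\le(M_*+\omega)\norm{x-x_*}$. The value bound follows by integrating $G$ along the segment from $x_*$, which gives $F(x)-F(x_*)\le\tfrac12(M_*+\omega)\norm{x-x_*}^2$ together with the lower bound $\ge\tfrac12(m_*-\omega)\norm{x-x_*}^2$; the lower bound also yields the uniqueness of the minimizer. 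For a convergent trajectory, a tail eventually enters $B_{r_\gamma}$, so its root factors are at most $\gamma$ for every $\gamma>c_*$, which proves \eqref{eq:nonlinear-root-bounds}. Sharpness comes from the quadratic $F(x)=\tfrac12 x^T\diag(m_*,M_*)x$ in $\R^2$ with the matched balanced orbit of \cref{prop:endpoint-orbit}. For BB1 this uses the secant $s_{-1}\propto g_0$; for BB2 it uses the balance of \cref{cor:hilb-weighted}. These give factors exactly $c_*$ and $c_*^2$.
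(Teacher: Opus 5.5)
Your quadratic step, the secant perturbation estimates, the gradient and value bounds, the tail argument and the sharpness example all match the paper. The finite-horizon comparison and the restart, however, do not fit together as you set them up.

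You stop the comparison at the first index $q\le N$ at which the frozen norm falls below $\tfrac14\gamma^NR_0$. Yet your restart uses fixed blocks of $N$ steps and claims that the pair maximum $\max\{\norm{e_{-1}},\norm{e_0}\}$ contracts by $\gamma^N$. Neither reading of this works.

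\emph{Block runs to $N$.} The steps after $q$ carry no tracking information. The crude one-step factor $L_*=\kappa_*-1$ of \eqref{eq:frozen-one-step-growth-constant} exceeds $1$ once $\kappa_*>2$, so the factor $L_*^{N-q}$ can wipe out the gain $\gamma^N$.

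\emph{Block ends at $q$.} The new pair is $(x_{q-1},x_q)$. By the definition of $q$, $x_{q-1}$ is a pre-hitting point whose error can be of order $D_0\norm{e_0}$. The pair maximum then need not decrease at all, and your induction does not close.

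The missing idea is the one the paper's restart rests on. The block estimate sees the predecessor only through the ball condition and through the frozen first step $\widehat\alpha_\ell(e_0-e_{-1})\in[1/M_*,1/m_*]$, which \cref{prop:bounded-first-step-envelope} covers uniformly. Hence both the tracking error and the frozen envelope are relative to $\norm{e_0}$ alone. The paper therefore:
\begin{itemize}
\item restarts at the variable times $k_{i+1}=k_i+q_i$;
\item propagates only $\norm{e_{k_i}}\le\gamma^{k_i}\norm{e_0}$, using $\tfrac38\gamma^N\le\gamma^{q_i}$;
\item keeps the predecessor $e_{k_{i+1}-1}$ inside the comparison ball via the within-block growth constant $D$ and the smaller initial radius $r_\gamma=\overline r_\gamma/D$.
\end{itemize}

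Alternatively, your remark that the step error is multiplied by $\norm{G(x_k)}$ can be made rigorous, and then no stopping is needed. For arbitrary nonzero $u,v$ one has $\norm{u/\norm{u}-v/\norm{v}}\le2\norm{u-v}/\norm{u}$. With $E_k=e_k-\widehat e_k$ and $k\ge1$, \cref{lem:frozen-step-directional-Lipschitz} then gives $|\widehat\alpha_\ell(s_{k-1})-\widehat\alpha_\ell(\widehat s_{k-1})|\le2L_\ell\norm{E_k-E_{k-1}}/\norm{s_{k-1}}$. Now write $E_{k+1}=(I-\widehat\alpha_kA_*)E_k-(\alpha_k-\widehat\alpha_k)A_*e_k-\alpha_kR_k$. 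Use $\norm{e_k}\le C\norm{s_{k-1}}$, which follows from $s_{k-1}=-\alpha_{k-1}G(x_{k-1})$ and the local gradient equivalence. This yields $\delta_{k+1}\le A\delta_k+B\,\omega(C_0r)$ with no lower bound on the secants. An exactly terminated frozen or nonlinear orbit only gives $\norm{E_{k+1}}\le C\norm{E_k}$.

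Tracking then holds over the full horizon. Fixed $N$-blocks with pair-maximum contraction do work once $N$ is chosen so that both $\norm{\widehat e_{N-1}}$ and $\norm{\widehat e_N}$ are at most $\tfrac12\gamma^N\norm{e_0}$. This would be a genuinely simpler route than the paper's. The paper needs the pre-hitting bound $\norm{\widehat s_j}\ge d_*\norm{e_0}$ only because it multiplies the step error by $\norm{A_*e_j}\le M_*C_0\norm{e_0}$ rather than by a multiple of the secant.

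As written, though, you state the heuristic without this estimate and then fall back on the stopping device, so the block step is unproved. Also, for BB2 the uniform envelope with a secant-generated first step needs the conjugacy by $A_*^{1/2}$ from \cref{prop:bounded-first-step-envelope}. \Cref{cor:hilb-weighted} gives only root factors for arbitrary first steps.
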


If $\spec(A_*)=\{\lambda\}$, then $A_*=\lambda I$ and $c_*=0$.  The theorem then says that every prescribed $\gamma\in(0,1)$ is a local envelope rate and that every such convergent trajectory has root factor zero, that is, it converges faster than every fixed geometric rate.

\subsection{Local secant geometry}\label{subsec:local-secant}

For $s\ne0$, define the two frozen-quadratic secant steps
\begin{equation}\label{eq:frozen-secant-steps}
  \widehat\alpha_1(s)
  =\frac{\norm{s}^2}{\ip{s}{A_*s}},
  \qquad
  \widehat\alpha_2(s)
  =\frac{\ip{s}{A_*s}}{\norm{A_*s}^2}.
\end{equation}

\begin{lemma}\label{lem:local-secant-geometry}
Let $0<r\le r_0$ be such that $\omega(r)<m_*$, and let distinct
$x,y\in B_r(x_*)$ be given.  With
\[
  s=x-y,\qquad v=G(x)-G(y),\qquad \varepsilon_r=\omega(r),
\]
one has
\begin{align}
  (m_*-\varepsilon_r)\norm{s}^2
  &\le \ip{s}{v}
  \le (M_*+\varepsilon_r)\norm{s}^2,
  \label{eq:local-monotonicity}\\
  (m_*-\varepsilon_r)\norm{s}
  &\le \norm{v}
  \le (M_*+\varepsilon_r)\norm{s}.
  \label{eq:local-Lipschitz-bounds}
\end{align}
Consequently both nonlinear BB steps are positive and satisfy
\begin{align}
  \frac1{M_*+\varepsilon_r}
  &\le \Phi_1(s,v)
  \le \frac1{m_*-\varepsilon_r},
  \label{eq:local-BB1-compact-bounds}\\
  \frac{m_*-\varepsilon_r}{(M_*+\varepsilon_r)^2}
  &\le \Phi_2(s,v)
  \le \frac1{m_*-\varepsilon_r}.
  \label{eq:local-BB2-compact-bounds}
\end{align}
After reducing the admissible radius so that $\omega(r)\le m_*/2$, there exists a constant $C_{\rm sec}$ depending only on $m_*$ and $M_*$ such that
\begin{equation}\label{eq:secant-step-perturbation}
  \left|\Phi_\ell(s,v)-\widehat\alpha_\ell(s)\right|
  \le C_{\rm sec}\omega(r),
  \qquad \ell\in\{1,2\},
\end{equation}
where
\begin{equation}\label{eq:explicit-Csec}
  C_{\rm sec}
  =\max\left\{\frac{2}{m_*^2},
                \frac{14M_*^2}{m_*^4}\right\}.
\end{equation}

Furthermore, for every $x\in B_r(x_*)$, with $e=x-x_*$,
\begin{align}
  (m_*-\varepsilon_r)\norm{e}
  &\le\norm{G(x)}
  \le(M_*+\varepsilon_r)\norm{e},
  \label{eq:local-gradient-equivalence}\\
  \frac{m_*-\varepsilon_r}{2}\norm{e}^2
  &\le F(x)-F(x_*)
  \le\frac{M_*+\varepsilon_r}{2}\norm{e}^2.
  \label{eq:local-value-equivalence}
\end{align}
\end{lemma}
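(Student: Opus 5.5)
We split the secant defect as $v=A_*s+\delta$ with $\delta:=G(x)-G(y)-A_*s$. The definition \eqref{eq:strict-modulus} gives $\norm{\delta}\le\varepsilon_r\norm{s}$, since $x,y\in B_r(x_*)$ are distinct. Every estimate in the lemma then follows from this one perturbation bound together with $m_*\norm{s}^2\le\ip{s}{A_*s}\le M_*\norm{s}^2$ and $m_*\norm{s}\le\norm{A_*s}\le M_*\norm{s}$.

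The plan has four steps.

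\emph{Step 1: secant bounds.} Cauchy--Schwarz gives $|\ip{s}{\delta}|\le\varepsilon_r\norm{s}^2$, which yields \eqref{eq:local-monotonicity}. The triangle inequality gives $\bigl|\norm{v}-\norm{A_*s}\bigr|\le\varepsilon_r\norm{s}$, which yields \eqref{eq:local-Lipschitz-bounds}. Because $\varepsilon_r<m_*$, we get $\ip{s}{v}>0$ and $v\ne0$, so both quotients are defined and positive.

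\emph{Step 2: step-size bounds.} The bounds on $\Phi_1$ are immediate from \eqref{eq:local-monotonicity}. For $\Phi_2$, I combine the lower bound on $\ip{s}{v}$ with the upper bound on $\norm{v}$ to get the lower estimate. For the upper estimate I use $\ip{s}{v}\le\norm{s}\norm{v}$, so $\Phi_2\le\norm{s}/\norm{v}\le1/(m_*-\varepsilon_r)$.

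\emph{Step 3: perturbation estimate \eqref{eq:secant-step-perturbation}.} Assume $\varepsilon_r\le m_*/2$. All denominators are then bounded below: $\ip{s}{v}\ge(m_*/2)\norm{s}^2$ and $\norm{v}\ge(m_*/2)\norm{s}$. Both functionals are zero-homogeneous, so I normalize $\norm{s}=1$. This step is mostly constant bookkeeping, and matching the stated $C_{\rm sec}$ exactly may need a slightly careful grouping of terms.

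For BB1,
\[
  \Phi_1-\widehat\alpha_1=\frac{-\ip{s}{\delta}}{\ip{s}{v}\ip{s}{A_*s}},
\]
which is bounded in absolute value by $\varepsilon_r/\bigl((m_*/2)m_*\bigr)=2\varepsilon_r/m_*^2$.

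For BB2, I write
\[
  \Phi_2-\widehat\alpha_2
  =\frac{\ip{s}{\delta}}{\norm{v}^2}
   +\ip{s}{A_*s}\left(\frac1{\norm{v}^2}-\frac1{\norm{A_*s}^2}\right).
\]
The first term is at most $4\varepsilon_r/m_*^2$. For the second, $\bigl|\norm{A_*s}^2-\norm{v}^2\bigr|\le\varepsilon_r\bigl(2M_*+\varepsilon_r\bigr)\le\tfrac52M_*\varepsilon_r$, and the denominator satisfies $\norm{v}^2\norm{A_*s}^2\ge m_*^4/4$. Together with $\ip{s}{A_*s}\le M_*$ this gives $10M_*^2\varepsilon_r/m_*^4$. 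Adding the $4\varepsilon_r/m_*^2\le4M_*^2\varepsilon_r/m_*^4$ from the first term gives $14M_*^2/m_*^4$, which matches \eqref{eq:explicit-Csec}.

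\emph{Step 4: gradient and value bounds.} Apply Step 1 with $y=x_*$, using $G(x_*)=0$; this gives \eqref{eq:local-gradient-equivalence}, and the case $x=x_*$ is trivial. For \eqref{eq:local-value-equivalence}, write
\[
  F(x)-F(x_*)=\int_0^1\ip{G(x_*+te)}{e}\,dt,
\]
which is valid since $F$ is $C^1$ and the segment lies in the convex ball $B_r(x_*)$. Setting $x_t=x_*+te$, I apply the two-point monotonicity bound with $s=te$ to get
\[
  (m_*-\varepsilon_r)t\norm{e}^2\le\ip{G(x_t)}{e}\le(M_*+\varepsilon_r)t\norm{e}^2.
\]
Integrating over $t\in[0,1]$ produces the factor $1/2$ on both sides, which finishes the lemma.
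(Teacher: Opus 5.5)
Your proposal is correct and follows essentially the same route as the paper. Both use the decomposition $v=A_*s+d$ with $\norm{d}\le\varepsilon_r\norm{s}$, Cauchy--Schwarz and the triangle inequality, and the same BB2 splitting; your two-term expression is just the paper's single fraction over $\norm{A_*s}^2\norm{v}^2$ split apart, and it yields the same constant $14M_*^2/m_*^4$. The only cosmetic difference is that you get the value bounds by integrating the monotonicity estimate \eqref{eq:local-monotonicity} along the segment, instead of splitting off $\tfrac12\ip{A_*e}{e}$ plus a remainder, and this gives identical constants.
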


\begin{proof}
By the definition of $\omega(r)$,
\begin{equation}\label{eq:secant-remainder-decomposition}
  v=A_*s+d,
  \qquad
  \norm{d}\le\varepsilon_r\norm{s}.
\end{equation}
Since $m_*I\le A_*\le M_*I$, Cauchy--Schwarz gives
\[
  \ip{s}{v}
  =\ip{s}{A_*s}+\ip{s}{d}
  \in
  \bigl[(m_*-\varepsilon_r)\norm{s}^2,
        (M_*+\varepsilon_r)\norm{s}^2\bigr],
\]
which is \eqref{eq:local-monotonicity}.  Similarly, the triangle and reverse triangle inequalities give
\[
  \norm{v}
  \le\norm{A_*s}+\norm{d}
  \le(M_*+\varepsilon_r)\norm{s}
\]
and
\[
  \norm{v}
  \ge\norm{A_*s}-\norm{d}
  \ge(m_*-\varepsilon_r)\norm{s},
\]
which proves \eqref{eq:local-Lipschitz-bounds}.  The BB1 bounds in
\eqref{eq:local-BB1-compact-bounds} follow immediately.  For BB2, the lower bound follows by combining the lower bound for $\ip{s}{v}$ with the upper bound for $\norm{v}$, while Cauchy--Schwarz and the lower bound for $\norm{v}$ give
\[
  \Phi_2(s,v)
  =\frac{\ip{s}{v}}{\norm{v}^2}
  \le\frac{\norm{s}}{\norm{v}}
  \le\frac1{m_*-\varepsilon_r}.
\]

For BB1,
\begin{align*}
  \left|
  \Phi_1(s,v)-\widehat\alpha_1(s)
  \right|
  &=\norm{s}^2
    \frac{|\ip{s}{d}|}
    {\ip{s}{v}\,\ip{s}{A_*s}}\le
    \frac{\varepsilon_r}
    {m_*(m_*-\varepsilon_r)}
  \le\frac{2}{m_*^2}\varepsilon_r
\end{align*}
when $\varepsilon_r\le m_*/2$.

For BB2, set
\[
  p=\ip{s}{A_*s},\qquad
  q=\norm{A_*s}^2,
\]
and
\[
  \delta=\ip{s}{d},\qquad
  h=2\ip{A_*s}{d}+\norm{d}^2.
\]
Then
\[
  \Phi_2(s,v)=\frac{p+\delta}{q+h},
  \qquad
  \widehat\alpha_2(s)=\frac pq,
\]
and hence
\begin{equation}\label{eq:BB2-secant-difference-algebra}
  \left|
  \Phi_2(s,v)-\widehat\alpha_2(s)
  \right|
  =\frac{|q\delta-ph|}{q(q+h)}.
\end{equation}
The remainder bounds imply
\[
  |\delta|\le\varepsilon_r\norm{s}^2,
  \qquad
  |h|\le(2M_*\varepsilon_r+\varepsilon_r^2)\norm{s}^2.
\]
Moreover,
\[
  m_*\norm{s}^2\le p\le M_*\norm{s}^2,
  \qquad
  m_*^2\norm{s}^2\le q\le M_*^2\norm{s}^2,
\]
and
\[
  q+h=\norm{v}^2
  \ge(m_*-\varepsilon_r)^2\norm{s}^2.
\]
Substitution in \eqref{eq:BB2-secant-difference-algebra} gives
\begin{align*}
  \left|
  \Phi_2(s,v)-\widehat\alpha_2(s)
  \right|
  &\le
  \frac{M_*^2\varepsilon_r
        +M_*(2M_*\varepsilon_r+\varepsilon_r^2)}
       {m_*^2(m_*-\varepsilon_r)^2}.
\end{align*}
If $\varepsilon_r\le m_*/2\le M_*/2$, the numerator is at most
$(7/2)M_*^2\varepsilon_r$ and the denominator is at least $m_*^4/4$.  This proves \eqref{eq:secant-step-perturbation} with the explicit constant in \eqref{eq:explicit-Csec}.

Taking $y=x_*$ in \eqref{eq:strict-modulus}, and using $G(x_*)=0$, gives
\[
  G(x_*+e)=A_*e+r_e,
  \qquad
  \norm{r_e}\le\varepsilon_r\norm{e},
\]
which proves \eqref{eq:local-gradient-equivalence}.  Finally, the segment
$x_*+te$, $0\le t\le1$, lies in $B_r(x_*)$, and
\[
  G(x_*+te)=tA_*e+r_t,
  \qquad
  \norm{r_t}\le t\varepsilon_r\norm{e}.
\]
Therefore
\begin{align*}
  F(x_*+e)-F(x_*)
  &=\int_0^1\ip{G(x_*+te)}{e}\,dt
   =\frac12\ip{A_*e}{e}
    +\int_0^1\ip{r_t}{e}\,dt,
\end{align*}
and
\[
  \left|\int_0^1\ip{r_t}{e}\,dt\right|
  \le\int_0^1t\varepsilon_r\norm{e}^2\,dt
  =\frac{\varepsilon_r}{2}\norm{e}^2.
\]
Combining this with
$m_*\norm{e}^2\le\ip{A_*e}{e}\le M_*\norm{e}^2$
proves \eqref{eq:local-value-equivalence}.
\end{proof}

\begin{lemma}\label{lem:frozen-step-directional-Lipschitz}
There are constants $L_1,L_2<\infty$, depending only on $m_*$ and $M_*$, such that for all nonzero $u,v\in\mathcal H$,
\begin{equation}\label{eq:frozen-step-normalized-Lipschitz}
  |\widehat\alpha_\ell(u)-\widehat\alpha_\ell(v)|
  \le L_\ell
  \left\|\frac{u}{\norm{u}}-\frac{v}{\norm{v}}\right\|
  \le
  \frac{2L_\ell\norm{u-v}}{\min\{\norm{u},\norm{v}\}},
  \qquad \ell\in\{1,2\}.
\end{equation}
\end{lemma}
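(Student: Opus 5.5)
Both frozen secant steps are homogeneous of degree zero, $\widehat\alpha_\ell(ts)=\widehat\alpha_\ell(s)$ for $t\ne0$. It therefore suffices to prove a Lipschitz bound for $\widehat\alpha_\ell$ restricted to the unit sphere. The second inequality in \eqref{eq:frozen-step-normalized-Lipschitz} is then a separate elementary fact about normalization.

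\emph{Step 1 (unit vectors).} Let $\hat u=u/\norm{u}$ and $\hat v=v/\norm{v}$. Then $\widehat\alpha_1(\hat u)=1/\ip{\hat u}{A_*\hat u}$ and $\widehat\alpha_2(\hat u)=\ip{\hat u}{A_*\hat u}/\norm{A_*\hat u}^2$. For unit vectors we have
\[
  |\ip{\hat u}{A_*\hat u}-\ip{\hat v}{A_*\hat v}|
  =|\ip{\hat u-\hat v}{A_*\hat u}+\ip{\hat v}{A_*(\hat u-\hat v)}|
  \le 2M_*\norm{\hat u-\hat v},
\]
and similarly $|\norm{A_*\hat u}^2-\norm{A_*\hat v}^2|\le 2M_*^2\norm{\hat u-\hat v}$. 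The quadratic forms are bounded below by $m_*$ and $m_*^2$, respectively. The quotient rule estimate $|p/q-p'/q'|\le(|p-p'|q'+p'|q-q'|)/(qq')$ then gives the constants:
\begin{itemize}
  \item BB1: $L_1=2M_*/m_*^2$;
  \item BB2: $L_2=(2M_*\cdot M_*^2+M_*\cdot 2M_*^2)/m_*^4=4M_*^3/m_*^4$.
\end{itemize}
Both depend only on $m_*$ and $M_*$.

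\emph{Step 2 (normalization).} We show $\norm{\hat u-\hat v}\le 2\norm{u-v}/\min\{\norm u,\norm v\}$. Assume without loss of generality that $\norm u\le\norm v$. Write
\[
  \hat u-\hat v=\frac{u-v}{\norm u}+v\left(\frac1{\norm u}-\frac1{\norm v}\right).
\]
The second term has norm $(\norm v-\norm u)/\norm u\le\norm{u-v}/\norm u$ by the reverse triangle inequality. Adding the two terms gives the factor $2$.

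No step is a genuine obstacle. The only care needed is to keep the lower bounds $m_*$ and $m_*^2$ for the denominators, which hold because the vectors are nonzero and $A_*\ge m_*I$, so that the constants stay independent of $u$ and $v$.
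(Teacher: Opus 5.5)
Your proposal is correct and follows the paper's proof essentially step for step: zero-homogeneity reduces to the unit sphere, you use the same Lipschitz bounds $2M_*$ and $2M_*^2$ for $\ip{w}{A_*w}$ and $\norm{A_*w}^2$ together with the lower bounds $m_*$ and $m_*^2$, and you use the same normalization decomposition giving the factor $2$. The only difference is that you split the BB2 quotient differently, which gives $L_2=4M_*^3/m_*^4$ instead of the paper's $2M_*/m_*^2+2M_*^3/m_*^4$. This is harmless, since only finiteness and dependence on $m_*,M_*$ matter.
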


\begin{proof}
Both maps are zero-homogeneous, so only the directions of $u$ and $v$ matter.  On the unit sphere, write
\[
  a(w)=\ip{w}{A_*w},
  \qquad
  b(w)=\norm{A_*w}^2=\ip{w}{A_*^2w}.
\]
For unit vectors $w,z$,
\begin{align*}
  |a(w)-a(z)|
  &\le 2M_*\norm{w-z},\\
  |b(w)-b(z)|
  &\le 2M_*^2\norm{w-z}.
\end{align*}
Moreover,
\[
  m_*\le a(w)\le M_*,
  \qquad
  m_*^2\le b(w)\le M_*^2.
\]
It follows that
\begin{align*}
  \left|\frac1{a(w)}-\frac1{a(z)}\right|
  &\le\frac{2M_*}{m_*^2}\norm{w-z},\\
  \left|\frac{a(w)}{b(w)}-\frac{a(z)}{b(z)}\right|
  &\le
  \left(\frac{2M_*}{m_*^2}
        +\frac{2M_*^3}{m_*^4}\right)\norm{w-z}.
\end{align*}
Thus one may take
\[
  L_1=\frac{2M_*}{m_*^2},
  \qquad
  L_2=\frac{2M_*}{m_*^2}+\frac{2M_*^3}{m_*^4}.
\]
Applying these estimates to $w=u/\norm{u}$ and $z=v/\norm{v}$ proves the first inequality in \eqref{eq:frozen-step-normalized-Lipschitz}.  For the second, assume without loss of generality that $\norm{u}\le\norm{v}$.  Then
\begin{align*}
  \left\|\frac{u}{\norm{u}}-\frac{v}{\norm{v}}\right\|
  &\le
  \frac{\norm{u-v}}{\norm{u}}
  +\norm{v}\left|\frac1{\norm{u}}-\frac1{\norm{v}}\right|
   \le\frac{2\norm{u-v}}{\norm{u}},
\end{align*}
and the claimed bound follows by symmetry.
\end{proof}

\subsection{Quadratic envelopes with a bounded first step}\label{subsec:bounded-first-step}

The nonlinear restart below produces a frozen first step from a secant joining two nonlinear iterates.  It is therefore important that the quadratic envelope be uniform over a compact set of first steps, rather than only under matched initialization.

\begin{proposition}\label{prop:bounded-first-step-envelope}
Let $A=A^*$ be bounded and uniformly positive, and set
\[
  m=\min\spec(A),\qquad M=\max\spec(A).
\]
Fix $\ell\in\{1,2\}$ and let $I\Subset(0,\infty)$ be compact.  Consider the quadratic iteration
\begin{equation}\label{eq:quadratic-bounded-first-step}
  \widehat e_{k+1}=(I-\widehat\alpha_kA)\widehat e_k,
  \qquad \widehat\alpha_0\in I,
\end{equation}
where, for $k\ge1$, $\widehat\alpha_k$ is the pure BB$\ell$ step computed from
$\widehat s_{k-1}=\widehat e_k-\widehat e_{k-1}$ and
$A\widehat s_{k-1}$.  Set
\[
  c_A=\frac{M-m}{M+m}.
\]
For every $\beta\in(c_A,1)$ there is $C=C(A,I,\beta,\ell)<\infty$ such that
\begin{equation}\label{eq:bounded-first-step-envelope}
  \norm{\widehat e_k}
  \le C\beta^k\norm{\widehat e_0}
  \qquad(k\ge0)
\end{equation}
for every $\widehat e_0\in\mathcal H$ and every $\widehat\alpha_0\in I$.
\end{proposition}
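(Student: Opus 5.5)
The plan is to reduce the bounded-first-step statement to the matched-initialization uniform envelope of \cref{thm:hilbert-main} (for BB1) and \cref{cor:hilb-weighted} (for BB2, with $\Psi(\lambda)=\lambda$). The reduction uses the structure of the iteration itself: the first step is a single bounded multiplier, and from index $1$ on the state is a genuine matched trajectory started from $\widehat e_1$.

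\emph{First step.} Since $\widehat\alpha_0\in I$ and $I$ is compact in $(0,\infty)$, the operator $I-\widehat\alpha_0A$ has norm at most $K_I:=\max_{\alpha\in I}\max\{|1-\alpha m|,|1-\alpha M|\}<\infty$. Hence $\norm{\widehat e_1}\le K_I\norm{\widehat e_0}$. If $\widehat e_1=0$, or if $\widehat e_0=0$, the iteration terminates and there is nothing to prove.

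\emph{Tail as a matched trajectory.} Work with the gradients $\widehat g_k=A\widehat e_k$. For $k\ge1$, before termination, we have $\widehat s_{k-1}=-\widehat\alpha_{k-1}\widehat g_{k-1}$. By zero-homogeneity, as already computed in \cref{subsec:nonlinear-setting}, $\widehat\alpha_k$ is the delayed BB$\ell$ Rayleigh quotient of $\widehat g_{k-1}$.

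Now consider the shifted sequence $\widetilde g_j:=\widehat g_{j+1}$ for $j\ge0$. Its step at $j=0$ is $\widehat\alpha_1$, which is the Rayleigh quotient of $\widehat g_0$, not of $\widetilde g_0=\widehat g_1$. So the tail is not literally matched. The fix is to shift by one more index. The sequence $\widehat g_1,\widehat g_2,\dots$ uses at index $k\ge2$ the quotient of $\widehat g_{k-1}$. Hence the sequence started at $\widehat g_1$ with first step $\widehat\alpha_1$ is a BB$\ell$ sequence with an \emph{arbitrary} first step $\widehat\alpha_1$.

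So matching is unavailable directly, but $\widehat\alpha_1$ is confined to a compact interval. By \eqref{eq:local-BB1-compact-bounds} and \eqref{eq:local-BB2-compact-bounds} with $\varepsilon=0$, it lies in $[1/M,1/m]$ for BB1 and in $[m/M^2,1/m]$ for BB2. Thus every step $\widehat\alpha_k$ with $k\ge1$ lies in a fixed compact interval $I'$. This does not yet give a uniform envelope, however, because the hard case is an arbitrary first step.

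\emph{Uniform envelope with arbitrary first step.} The key observation is that the proof of \cref{thm:main} and its Hilbert analogue already handle arbitrary $\alpha_0$. After one step, the normalized state $\chi_1=(w_1,w_0)$ is an arbitrary point of the state space, and \cref{lem:uniform-orbit} (respectively its Hilbert-space version in \cref{app:hilbert-proofs}) gives a bound $\sum_{k<m}\log r(\chi_k)\le m(\log c+\eta)+B_\eta$ that is \emph{uniform over all initial states}. Therefore I would repeat the argument of \cref{cor:uniform-envelope}, but start the cocycle at $k=1$ instead of $k=0$, as in \eqref{eq:arbitrary-init-product}. This gives
\[
\norm{\widehat g_k}\le \norm{\widehat g_1}e^{B_\eta}(c_Ae^\eta)^{k-1}\le K_I e^{B_\eta}(c_Ae^\eta)^{k-1}\norm{\widehat g_0}.
\]
Choose $\eta$ with $c_Ae^\eta\le\beta$. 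For BB2, pass through the conjugacy $\Psi(A)^{1/2}=A^{1/2}$, which costs the factor $\sqrt{M/m}$. Finally, convert gradients to errors using $m\norm{\widehat e_k}\le\norm{\widehat g_k}\le M\norm{\widehat e_k}$, which costs a factor $M/m$.

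\emph{Main obstacle.} In Hilbert space the semi-uniform bound must be uniform over \emph{all} initial states, including those whose active support is arbitrary. In finite dimensions only finitely many faces arise. In Hilbert space I expect the appendix to supply a single compactified state space of pairs of probability measures on $\spec(A)$, on which \cref{prop:abstract-semiuniform} applies directly with $\beta=\log c_A$. The point to check is that the uniform-envelope proof of \cref{thm:hilbert-main} yields a constant independent of the initial pair $(\nu_{g_1},\nu_{g_0})$, and not merely independent of diagonal matched pairs. If the appendix only treats the diagonal case, the fallback is to rerun \cref{prop:abstract-semiuniform} on the full product space, where invariance and the active-band coboundary bound do not depend on the start.
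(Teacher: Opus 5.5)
Your proposal is correct and, once the abandoned matched-reduction detour is set aside, it is the paper's proof. You bound the first step by $\sup_{\alpha\in I,\lambda\in\spec(A)}|1-\alpha\lambda|$. You then start the cocycle at $\chi_1=(\nu_{\widehat g_1},\nu_{\widehat g_0})$ in the single space $X_{\spec(A)}$ and apply \cref{lem:hilb-semiuniform}; that lemma is stated for every nonterminal segment from an arbitrary initial state, which settles the obstacle you flagged. You finish by converting gradients to errors and treating BB2 through the $A^{1/2}$ conjugacy. The only omission is the trivial case $m=M$, where $\log c_A$ is undefined, so the semi-uniform lemma does not apply; there the iteration terminates within two updates, as the paper notes separately.
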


\begin{proof}
Use the zero extension after termination and define
\begin{equation}\label{eq:bounded-first-step-LI}
  L_I:=\sup_{\alpha\in I,\,\lambda\in\spec(A)}|1-\alpha\lambda|<\infty.
\end{equation}
The first update therefore satisfies, for both the error and the gradient,
\begin{equation}\label{eq:bounded-first-step-first-update}
  \norm{\widehat e_1}\le L_I\norm{\widehat e_0},
  \qquad
  \norm{\widehat g_1}\le L_I\norm{\widehat g_0},
  \qquad
  \widehat g_k=A\widehat e_k.
\end{equation}

If $\spec(A)=\{\lambda\}$, then $A=\lambda I$.  Either $\widehat e_1=0$, or $\widehat s_0=-\widehat\alpha_0\lambda\widehat e_0\ne0$ and both BB secant quotients equal $1/\lambda$, so $\widehat e_2=0$.  Thus \eqref{eq:bounded-first-step-envelope} holds with
\[
  C=\max\left\{1,\frac{L_I}{\beta}\right\}.
\]
Assume henceforth that $m<M$, so $c_A>0$.

Consider first BB1.  By \eqref{eq:bounded-first-step-first-update} and the zero extension, termination at either of the first two updates is covered by the prefactor $\max\{1,L_I/\beta\}$.  We may therefore assume $\widehat g_1,\widehat g_2\ne0$; then the first generated secant is nonzero and
\[
  \chi_1=(\nu_{\widehat g_1},\nu_{\widehat g_0})
\]
is a regular initial state of the two-step Hilbert-space system in \cref{subsec:hilb-dynamics}, because its one-step multiplier is $\norm{\widehat g_2}/\norm{\widehat g_1}>0$.  Choose $\eta>0$ so that
\begin{equation}\label{eq:eta-between-cA-beta}
  c_Ae^\eta<\beta.
\end{equation}
For every $k\ge2$ before termination, the states
$\chi_1,\ldots,\chi_{k-1}$ form a nonterminal segment, and the semi-uniform estimate \eqref{eq:hilb-semiuniform} gives
\begin{align}
  \norm{\widehat g_k}
  &\le e^{B_{\spec(A),\eta}}
       (c_Ae^\eta)^{k-1}\norm{\widehat g_1}
  \le L_Ie^{B_{\spec(A),\eta}}
       (c_Ae^\eta)^{k-1}\norm{\widehat g_0}.
       \label{eq:bounded-first-step-gradient-envelope}
\end{align}
The same bound holds at and after termination under the zero extension.  Since
\[
  m\norm{\widehat e_k}
  \le\norm{\widehat g_k}
  \le M\norm{\widehat e_k},
\]
we obtain for $k\ge2$
\begin{align*}
  \norm{\widehat e_k}
  &\le
  \frac{ML_Ie^{B_{\spec(A),\eta}}}{m}
  (c_Ae^\eta)^{k-1}\norm{\widehat e_0}
  \le
  \frac{ML_Ie^{B_{\spec(A),\eta}}}{m\beta}
  \beta^k\norm{\widehat e_0},
\end{align*}
Thus for BB1 one may take
\begin{equation}\label{eq:explicit-bounded-first-step-C}
  C_1(A,I,\beta)
  =\max\left\{
      1,\frac{L_I}{\beta},
      \frac{ML_Ie^{B_{\spec(A),\eta}}}{m\beta}
    \right\}.
\end{equation}

For BB2, define the transformed errors
\begin{equation}\label{eq:BB2-bounded-first-step-transform}
  \widetilde e_k=A^{1/2}\widehat e_k,
  \qquad
  \widetilde s_{k-1}=A^{1/2}\widehat s_{k-1}.
\end{equation}
The update is still
\[
  \widetilde e_{k+1}=(I-\widehat\alpha_kA)\widetilde e_k.
\]
For every generated nonzero secant,
\begin{align*}
  \frac{\norm{\widetilde s_{k-1}}^2}
       {\ip{\widetilde s_{k-1}}{A\widetilde s_{k-1}}}
  &=\frac{\ip{\widehat s_{k-1}}{A\widehat s_{k-1}}}
          {\ip{\widehat s_{k-1}}{A^2\widehat s_{k-1}}}
  =\frac{\ip{\widehat s_{k-1}}{A\widehat s_{k-1}}}
          {\norm{A\widehat s_{k-1}}^2},
\end{align*}
which is exactly the BB2 step of the original sequence.  Hence
$\{\widetilde e_k\}$ is a BB1 sequence for the same operator $A$ and the same arbitrary first step $\widehat\alpha_0\in I$.  Applying the BB1 estimate and using
\[
  \sqrt m\norm{\widehat e_k}
  \le\norm{\widetilde e_k}
  \le\sqrt M\norm{\widehat e_k}
\]
gives
\[
  \norm{\widehat e_k}
  \le\sqrt{\frac{M}{m}}\,C_1(A,I,\beta)
       \beta^k\norm{\widehat e_0}.
\]
This proves the proposition uniformly over $\widehat\alpha_0\in I$ for both fixed rules.
\end{proof}

\subsection{Finite-horizon comparison with the frozen quadratic model}\label{subsec:finite-horizon}

Fix a rule $\ell\in\{1,2\}$.  For a local pair $e_{-1}=x_{-1}-x_*$ and $e_0=x_0-x_*$, let $e_j=x_j-x_*$ denote the nonlinear sequence.  Its frozen comparison sequence is defined by
\begin{equation}\label{eq:frozen-comparison-initial}
  \widehat e_0=e_0,
  \qquad
  \widehat\alpha_0=\widehat\alpha_\ell(e_0-e_{-1}),
  \qquad
  \widehat e_1=(I-\widehat\alpha_0A_*)\widehat e_0,
\end{equation}
and, for $j\ge1$, by the pure quadratic BB$\ell$ recurrence
\begin{equation}\label{eq:frozen-comparison-recurrence}
  \widehat s_{j-1}=\widehat e_j-\widehat e_{j-1},
  \qquad
  \widehat\alpha_j=\widehat\alpha_\ell(\widehat s_{j-1}),
  \qquad
  \widehat e_{j+1}=(I-\widehat\alpha_jA_*)\widehat e_j.
\end{equation}
As in \cref{prop:bounded-first-step-envelope}, if the frozen sequence terminates, it is kept at zero and no subsequent secant quotient is evaluated.

\begin{lemma}\label{lem:finite-horizon-tracking}
Fix an integer $N\ge1$ and $\tau\in(0,1)$.  There exist $r_{N,\tau}>0$, $D_{N,\tau}<\infty$, and a nondecreasing function
$\varepsilon_{N,\tau}:(0,r_{N,\tau}]\to[0,\infty)$ such that $D_{N,\tau}$ depends only on $N,\tau,m_*,M_*$ and the fixed rule $\ell$, while $r_{N,\tau}$ and $\varepsilon_{N,\tau}$ also depend on the local modulus $\omega$, and
\begin{equation}\label{eq:tracking-modulus-limit}
  \varepsilon_{N,\tau}(r)\longrightarrow0
  \qquad(r\downarrow0)
\end{equation}
and
\begin{equation}\label{eq:tracking-modulus-half-tau}
  \varepsilon_{N,\tau}(r)\le\frac{\tau}{2}
  \qquad(0<r\le r_{N,\tau}).
\end{equation}
Suppose
\[
  R:=\max\{\norm{e_{-1}},\norm{e_0}\}
  \le r\le r_{N,\tau},
  \qquad e_{-1}\ne e_0,
  \qquad e_0\ne0,
\]
and let $1\le q\le N$.  If
\begin{equation}\label{eq:pre-hitting-lower-bound}
  \norm{\widehat e_j}\ge\tau\norm{e_0}
  \qquad(0\le j<q),
\end{equation}
then the nonlinear iteration is well defined through the update producing $e_q$, and
\begin{align}
  \max_{0\le j\le q}\norm{e_j-\widehat e_j}
  &\le\varepsilon_{N,\tau}(r)\norm{e_0},
  \label{eq:finite-horizon-tracking}\\
  \max_{0\le j\le q}
  \bigl(\norm{e_j}+\norm{\widehat e_j}\bigr)
  &\le D_{N,\tau}\norm{e_0}.
  \label{eq:finite-horizon-growth}
\end{align}
\end{lemma}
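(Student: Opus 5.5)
We prove the lemma by induction on $j$, comparing one nonlinear step with one frozen step. The bounds are kept in the normalized form $\norm{e_j-\widehat e_j}\le\delta_j\norm{e_0}$ and $\norm{e_j}+\norm{\widehat e_j}\le D_j\norm{e_0}$, with constants $\delta_j=\delta_j(r)$ and $D_j$ independent of the particular pair. Three facts drive the whole argument. First, by \cref{lem:local-secant-geometry}, as long as all iterates stay in a ball $B_{\rho}(x_*)$ with $\omega(\rho)\le m_*/2$, every nonlinear secant quotient is defined and positive. Second, these quotients are bounded by $2/m_*$. Third, they differ from the frozen-direction steps $\widehat\alpha_\ell(s_{j-1})$ by at most $C_{\rm sec}\omega(\rho)$. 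The frozen steps themselves lie in $[M_*^{-2}m_*,m_*^{-1}]$, so every one-step multiplier $I-\alpha A_*$ has norm at most $L:=1+2M_*/m_*$. This is what gives $D_{N,\tau}$ depending only on $N,m_*,M_*$, for instance $D_j\le 2(L+1)^{j}$, as long as $\delta_j\le1$.

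\emph{Base step.} We have $e_0=\widehat e_0$. The nonlinear first step is $\alpha_0=\Phi_\ell(e_0-e_{-1},G(x_0)-G(x_{-1}))$, and by \eqref{eq:secant-step-perturbation} it satisfies $|\alpha_0-\widehat\alpha_0|\le C_{\rm sec}\omega(r)$. Write $G(x_0)=A_*e_0+\rho_0$ with $\norm{\rho_0}\le\omega(r)\norm{e_0}$. Then
\[
e_1-\widehat e_1=-(\alpha_0-\widehat\alpha_0)A_*e_0-\alpha_0\rho_0 ,
\]
so $\delta_1\le (C_{\rm sec}M_*+2/m_*)\omega(r)$. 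The general step has the same structure:
\[
e_{j+1}-\widehat e_{j+1}=(I-\widehat\alpha_jA_*)(e_j-\widehat e_j)-(\alpha_j-\widehat\alpha_j)A_*e_j-\alpha_j\rho_j,
\]
with $\norm{\rho_j}\le\omega(\rho)\norm{e_j}$. It therefore suffices to bound the step discrepancy $|\alpha_j-\widehat\alpha_j|$ by $o(1)$ as $r\downarrow0$.

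\emph{Main obstacle: the step discrepancy.} We split
\[
\alpha_j-\widehat\alpha_j=\bigl[\Phi_\ell(s_{j-1},y_{j-1})-\widehat\alpha_\ell(s_{j-1})\bigr]+\bigl[\widehat\alpha_\ell(s_{j-1})-\widehat\alpha_\ell(\widehat s_{j-1})\bigr].
\]
The first bracket is at most $C_{\rm sec}\omega(\rho)$. The second is controlled by \cref{lem:frozen-step-directional-Lipschitz}, which bounds it by $2L_\ell\norm{s_{j-1}-\widehat s_{j-1}}/\min\{\norm{s_{j-1}},\norm{\widehat s_{j-1}}\}$. This is where the pre-hitting hypothesis \eqref{eq:pre-hitting-lower-bound} enters. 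Since $\widehat s_{j-1}=-\widehat\alpha_{j-1}A_*\widehat e_{j-1}$, we get
\[
\norm{\widehat s_{j-1}}\ge \frac{m_*^2}{M_*^2}\,\tau\norm{e_0}
\]
for $1\le j<q$. This uses only $j-1<q$, so the hypothesis is needed only up to index $q-1$, which explains why the statement stops at $e_q$. The numerator satisfies $\norm{s_{j-1}-\widehat s_{j-1}}\le(\delta_j+\delta_{j-1})\norm{e_0}$. If we keep $\delta_j+\delta_{j-1}$ below half of $\tau m_*^2/M_*^2$, then $\norm{s_{j-1}}$ also has a lower bound. In particular $s_{j-1}\ne0$, so $x_j\ne x_{j-1}$ and the quotient is well defined. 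The second bracket is then at most a constant depending on $m_*,M_*,\tau,\ell$, times $\delta_j+\delta_{j-1}$.

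\emph{Closing the recursion.} The argument yields a linear recursion
\[
\delta_{j+1}\le K(\delta_j+\delta_{j-1})+K\,\omega(D_{N,\tau}r),
\]
with $K=K(\tau,m_*,M_*,\ell)$. Here $\rho=D_{N,\tau}r$ is the radius containing all iterates through index $q$, which holds because $\norm{e_j}\le D_j\norm{e_0}\le D_{N,\tau}r$. Iterating at most $N$ times gives $\delta_j\le K'_N\,\omega(D_{N,\tau}r)$. We therefore define
\[
\varepsilon_{N,\tau}(r):=K'_N\,\omega(D_{N,\tau}r),
\]
which is nondecreasing and tends to $0$ by \eqref{eq:strict-linearization}. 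We then choose $r_{N,\tau}$ so small that four conditions hold: $D_{N,\tau}r_{N,\tau}\le r_0$, $\omega(D_{N,\tau}r_{N,\tau})\le m_*/2$, $\varepsilon_{N,\tau}\le\tau/2$, and $\varepsilon_{N,\tau}$ stays below the threshold $\tau m_*^2/(4M_*^2)$ used above. The induction hypotheses are then self-consistent. The remaining bookkeeping is routine. Termination of the nonlinear method at some $e_j=0$ gives a harmless case, since tracking at $j$ gives $\norm{\widehat e_j}\le\varepsilon\norm{e_0}<\tau\norm{e_0}$, contradicting \eqref{eq:pre-hitting-lower-bound} unless $j\ge q$.
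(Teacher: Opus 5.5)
Your proposal is correct and follows essentially the same route as the paper. It uses the same exact error identity, the same splitting of $|\alpha_j-\widehat\alpha_j|$ into a $C_{\rm sec}\omega$ term plus a directional-Lipschitz term controlled by the pre-hitting lower bound on the frozen secants, and the same bootstrap choice of radius making $\varepsilon_{N,\tau}(r)$ a fixed multiple of $\omega$ at an enlarged radius. The differences are cosmetic: your non-maximal $\delta_j$ gives a two-term recursion, your step interval $[m_*/M_*^2,1/m_*]$ is cruder, and you work in the ball $B_{D_{N,\tau}r}(x_*)$ instead of $B_{C_0r}(x_*)$. One small correction: your constant $K$ also depends on $N$, through the growth bounds $D_j$.
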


\begin{proof}
For either frozen rule and every nonzero secant,
\begin{equation}\label{eq:frozen-step-compact-interval}
  a_-:=\frac1{M_*}
  \le\widehat\alpha_j\le
  \frac1{m_*}=:a_+.
\end{equation}
For BB1 this is the usual Rayleigh-quotient bound.  For BB2 it follows from
$m_*A_*\le A_*^2\le M_*A_*$, which gives
\[
  \frac1{M_*}
  \le
  \frac{\ip{s}{A_*s}}{\ip{s}{A_*^2s}}
  \le\frac1{m_*}.
\]
Set
\begin{equation}\label{eq:frozen-one-step-growth-constant}
  L_*:=\max_{\substack{\alpha\in[a_-,a_+]\\
                         \lambda\in[m_*,M_*]}}
       |1-\alpha\lambda|,
  \qquad
  D_0:=\max_{0\le j\le N}L_*^j.
\end{equation}
The first frozen step and all subsequent frozen steps lie in the same interval, so the zero extension after termination satisfies
\begin{equation}\label{eq:frozen-finite-growth}
  \max_{0\le j\le N}\norm{\widehat e_j}
  \le D_0\norm{e_0}.
\end{equation}

Define
\begin{equation}\label{eq:bootstrap-thresholds}
  d_*:=a_-m_*\tau=\frac{m_*\tau}{M_*},
  \qquad
  \delta_*:=\min\left\{1,\frac\tau2,\frac{d_*}{4}\right\},
  \qquad
  C_0:=\max\{1,D_0+\delta_*\}.
\end{equation}
For every $j<q$, the condition \eqref{eq:pre-hitting-lower-bound} and \eqref{eq:frozen-step-compact-interval} imply
\begin{align}
  \norm{\widehat s_j}
  &=\widehat\alpha_j\norm{A_*\widehat e_j}
  \ge a_-m_*\norm{\widehat e_j}
  \ge d_*\norm{e_0}.
  \label{eq:frozen-secant-lower-bound}
\end{align}

Put
\begin{equation}\label{eq:tracking-error-definition}
  E_j=e_j-\widehat e_j,
  \qquad
  \delta_j=
  \max_{0\le i\le j}\frac{\norm{E_i}}{\norm{e_0}}.
\end{equation}
Temporarily suppose that $r$ is small enough that
\begin{equation}\label{eq:temporary-local-smallness}
  C_0r\le r_0,
  \qquad
  \omega(C_0r)\le\frac{m_*}{2}.
\end{equation}
The final choice of $r_{N,\tau}$ below will enforce these conditions.  Write
\[
  \Omega_r:=\omega(C_0r).
\]
By \eqref{eq:local-BB1-compact-bounds}--\eqref{eq:local-BB2-compact-bounds}, every nonlinear step evaluated while its two secant endpoints remain in $B_{C_0r}(x_*)$ satisfies the common upper bound
\begin{equation}\label{eq:actual-step-upper}
  0<\alpha_j\le\frac1{m_*-\Omega_r}\le\frac2{m_*}.
\end{equation}

Because $e_0\ne0$ and $\Omega_r<m_*$, \eqref{eq:local-gradient-equivalence} gives
$\norm{G(x_*+e_0)}\ge(m_*-\Omega_r)\norm{e_0}>0$; hence the initial stopping test does not terminate the method.  Since the two initial points lie in $B_r(x_*)\subset B_{C_0r}(x_*)$, \eqref{eq:secant-step-perturbation} gives
\begin{equation}\label{eq:first-step-comparison}
  |\alpha_0-\widehat\alpha_0|
  \le C_1\Omega_r,
  \qquad C_1:=C_{\rm sec}.
\end{equation}
Moreover,
\begin{align}
  E_1
  &=e_1-\widehat e_1
  =-(\alpha_0-\widehat\alpha_0)A_*e_0
    -\alpha_0\bigl(G(x_*+e_0)-A_*e_0\bigr).
  \label{eq:first-error-identity}
\end{align}
The strict linearization estimate and \eqref{eq:actual-step-upper} therefore yield
\begin{align*}
  \norm{E_1}
  &\le
  M_*C_1\Omega_r\norm{e_0}
  +\frac2{m_*}\Omega_r\norm{e_0}.
\end{align*}
Thus
\begin{equation}\label{eq:first-iterate-comparison}
  \delta_1\le B_1\Omega_r,
  \qquad
  B_1:=M_*C_1+\frac2{m_*}.
\end{equation}

Suppose that the nonlinear and frozen sequences have been constructed through an index
$j\in\{1,\ldots,q-1\}$ and that
\begin{equation}\label{eq:bootstrap-hypothesis}
  \delta_j\le\delta_*.
\end{equation}
For every $0\le i\le j$, \eqref{eq:frozen-finite-growth} and \eqref{eq:bootstrap-hypothesis} imply
\begin{align}
  \norm{\widehat e_i}
  &\le D_0\norm{e_0},\notag\\
  \norm{e_i}
  &\le\norm{\widehat e_i}+\norm{E_i}
   \le(D_0+\delta_*)\norm{e_0}
   \le C_0\norm{e_0}
   \le C_0r.
  \label{eq:all-points-in-bootstrap-ball}
\end{align}
Thus all local secant and gradient estimates may be applied on $B_{C_0r}(x_*)$.

The actual and frozen generated secants satisfy
\begin{align}
  s_{j-1}-\widehat s_{j-1}
  &=(e_j-e_{j-1})-(\widehat e_j-\widehat e_{j-1})
  =E_j-E_{j-1},
\end{align}
and hence
\begin{equation}\label{eq:generated-secant-difference}
  \norm{s_{j-1}-\widehat s_{j-1}}
  \le2\delta_j\norm{e_0}.
\end{equation}
By \eqref{eq:frozen-secant-lower-bound} with index $j-1$ and
$\delta_j\le\delta_*\le d_*/4$,
\begin{align}
  \norm{\widehat s_{j-1}}
  &\ge d_*\norm{e_0},\notag\\
  \norm{s_{j-1}}
  &\ge\norm{\widehat s_{j-1}}
       -\norm{s_{j-1}-\widehat s_{j-1}}
  \ge(d_*-2\delta_j)\norm{e_0}
  \ge\frac{d_*}{2}\norm{e_0}.
  \label{eq:nonlinear-secant-lower-bound}
\end{align}
In particular, the actual secant is nonzero.  Also, since $j<q$,
\begin{equation}\label{eq:nonlinear-prehit-lower-bound}
  \norm{e_j}
  \ge\norm{\widehat e_j}-\norm{E_j}
  \ge(\tau-\delta_*)\norm{e_0}
  \ge\frac\tau2\norm{e_0}>0.
\end{equation}
The local gradient equivalence therefore gives $G(x_*+e_j)\ne0$, so the stopping test at index $j$ does not terminate the method, and \cref{lem:local-secant-geometry} makes the next nonlinear quotient well defined and positive.

We now estimate the difference between the nonlinear and frozen steps.  Insert the frozen quotient evaluated at the actual secant:
\begin{align}
  |\alpha_j-\widehat\alpha_j|
  &\le
  \left|\Phi_\ell(s_{j-1},y_{j-1})
       -\widehat\alpha_\ell(s_{j-1})\right|+
  \left|\widehat\alpha_\ell(s_{j-1})
       -\widehat\alpha_\ell(\widehat s_{j-1})\right|.
  \label{eq:step-error-splitting}
\end{align}
The first term is at most $C_{\rm sec}\Omega_r$.  For the second, use
\eqref{eq:frozen-step-normalized-Lipschitz},
\eqref{eq:generated-secant-difference}, and
\eqref{eq:nonlinear-secant-lower-bound}:
\begin{align*}
  \left|\widehat\alpha_\ell(s_{j-1})
       -\widehat\alpha_\ell(\widehat s_{j-1})\right|
  &\le
  \frac{2L_\ell\norm{s_{j-1}-\widehat s_{j-1}}}
       {\min\{\norm{s_{j-1}},\norm{\widehat s_{j-1}}\}}
  \le
  \frac{2L_\ell\,2\delta_j\norm{e_0}}
       {(d_*/2)\norm{e_0}}
  =\frac{8L_\ell}{d_*}\delta_j.
\end{align*}
Consequently
\begin{equation}\label{eq:successive-step-comparison}
  |\alpha_j-\widehat\alpha_j|
  \le A_0\delta_j+B_0\Omega_r,
  \qquad
  A_0:=\frac{8L_\ell}{d_*},
  \quad B_0:=C_{\rm sec}.
\end{equation}
Here, $A_0\delta_j$ measures the error in the secant direction, whereas $B_0\Omega_r$ measures the nonlinear deviation of the gradient secant from $A_*s_{j-1}$.

Let
\[
  R_j:=G(x_*+e_j)-A_*e_j,
  \qquad
  \norm{R_j}\le\Omega_r\norm{e_j}.
\]
Subtracting the two updates gives the exact identity
\begin{align}
  E_{j+1}
  &=(I-\widehat\alpha_jA_*)E_j
    -(\alpha_j-\widehat\alpha_j)A_*e_j
    -\alpha_jR_j.
  \label{eq:tracking-recurrence}
\end{align}
By \eqref{eq:frozen-one-step-growth-constant},
\eqref{eq:all-points-in-bootstrap-ball},
\eqref{eq:actual-step-upper}, and
\eqref{eq:successive-step-comparison},
\begin{align*}
  \frac{\norm{E_{j+1}}}{\norm{e_0}}
  &\le
  L_*\delta_j
  +(A_0\delta_j+B_0\Omega_r)M_*C_0
  +\frac2{m_*}\Omega_r C_0\\
  &=\bigl(L_*+A_0M_*C_0\bigr)\delta_j
    +C_0\left(M_*B_0+\frac2{m_*}\right)\Omega_r.
\end{align*}
Since
\[
  \delta_{j+1}
  =\max\left\{\delta_j,
       \frac{\norm{E_{j+1}}}{\norm{e_0}}\right\},
\]
one may take
\begin{equation}\label{eq:tracking-AB-explicit}
  A:=\max\left\{1,L_*+A_0M_*C_0\right\},
  \qquad
  B:=C_0\left(M_*B_0+\frac2{m_*}\right),
\end{equation}
and obtain
\begin{equation}\label{eq:tracking-scalar-recurrence}
  \delta_{j+1}\le A\delta_j+B\Omega_r.
\end{equation}
Here $A$ and $B$ depend only on $N,\tau,m_*,M_*$ and the fixed rule.

Set
\begin{equation}\label{eq:tracking-K-constant}
  \overline B:=\max\{B,B_1\},
  \qquad
  K_{N,\tau}:=\overline B
  \sum_{i=0}^{N-1}A^i.
\end{equation}
Because $\omega(t)\to0$, we can choose $r_{N,\tau}>0$ small enough that
\begin{equation}\label{eq:bootstrap-radius-choice}
  C_0r_{N,\tau}\le r_0,
  \qquad
  \omega(C_0r_{N,\tau})\le\frac{m_*}{2},
  \qquad
  K_{N,\tau}\omega(C_0r_{N,\tau})\le\delta_*.
\end{equation}
We claim that, simultaneously with the construction of the nonlinear iterates through index $q$,
\begin{equation}\label{eq:tracking-induction-formula}
  \delta_j
  \le\overline B\Omega_r
      \sum_{i=0}^{j-1}A^i
  \le K_{N,\tau}\Omega_r
  \le\delta_*,
  \qquad 1\le j\le q.
\end{equation}
For $j=1$, this follows from \eqref{eq:first-iterate-comparison}.  Suppose it holds through some $j<q$.  The last inequality in \eqref{eq:tracking-induction-formula} verifies the hypothesis needed to construct the next step.  Applying \eqref{eq:tracking-scalar-recurrence} and using $B\le\overline B$ gives
\begin{align*}
  \delta_{j+1}
  &\le
  A\overline B\Omega_r\sum_{i=0}^{j-1}A^i
  +\overline B\Omega_r
  =\overline B\Omega_r\sum_{i=0}^{j}A^i,
\end{align*}
This proves the induction and validates the construction through the update producing $e_q$.

For every $j<q$, \eqref{eq:nonlinear-prehit-lower-bound} shows that the nonlinear iterate is nonstationary.  The initial secant is nonzero by hypothesis, and every later secant has the form
\[
  s_{j-1}=-\alpha_{j-1}G(x_*+e_{j-1})\ne0.
\]
Hence every quotient required before the update producing $e_q$ is well defined and positive.

Finally, set
\begin{equation}\label{eq:tracking-epsilon-and-D}
  \varepsilon_{N,\tau}(r)
  :=K_{N,\tau}\omega(C_0r),
  \qquad
  D_{N,\tau}:=2D_0+\delta_*.
\end{equation}
The function $\varepsilon_{N,\tau}$ is nondecreasing and tends to zero.  By \eqref{eq:bootstrap-radius-choice}, it is at most $\delta_*\le\tau/2$. \eqref{eq:tracking-induction-formula} gives \eqref{eq:finite-horizon-tracking}, while
\begin{align*}
  \norm{e_j}+\norm{\widehat e_j}
  &\le\norm{E_j}+2\norm{\widehat e_j}
  \le(\delta_*+2D_0)\norm{e_0}
  =D_{N,\tau}\norm{e_0}
\end{align*}
proves \eqref{eq:finite-horizon-growth}.
\end{proof}

\subsection{Proof of the sharp local threshold}\label{subsec:nonlinear-proof}

\begin{proof}[Proof of \cref{thm:nonlinear-localization}]
Write $e_k=x_k-x_*$.  If $e_0=0$, the stopping test terminates the method at $x_*$ before any secant quotient is evaluated.  Assume henceforth that $e_0\ne0$.

Fix $\gamma\in(c_*,1)$ and choose
\begin{equation}\label{eq:beta-gamma-choice}
  c_*<\beta<\gamma.
\end{equation}
For every nonzero secant $s$, the frozen first step
$\widehat\alpha_\ell(s)$ belongs to a compact interval $I_\ell=[1/M_*,1/m_*]\Subset(0,\infty)$.  By
\cref{prop:bounded-first-step-envelope}, there exists $C_\beta<\infty$ such that every frozen comparison sequence satisfies
\begin{equation}\label{eq:frozen-beta-envelope}
  \norm{\widehat e_j}
  \le C_\beta\beta^j\norm{\widehat e_0}
  \qquad(j\ge0),
\end{equation}
uniformly over its secant-generated first step.

Choose $N\in\mathbb N$ so large that
\begin{equation}\label{eq:N-for-sharp-localization}
  C_\beta\left(\frac\beta\gamma\right)^N\le\frac14,
\end{equation}
and set
\begin{equation}\label{eq:tau-choice}
  \tau=\frac14\gamma^N.
\end{equation}
Then
\[
  \norm{\widehat e_N}
  \le C_\beta\beta^N\norm{\widehat e_0}
  \le\tau\norm{\widehat e_0}.
\]
Since $\norm{\widehat e_0}>\tau\norm{\widehat e_0}$, every frozen orbit has a first index $q\in\{1,\ldots,N\}$ for which
\begin{equation}\label{eq:first-hitting-definition}
  \norm{\widehat e_q}\le\tau\norm{\widehat e_0},
  \qquad
  \norm{\widehat e_j}>\tau\norm{\widehat e_0}
  \quad(0\le j<q).
\end{equation}
This remains true if the frozen orbit terminates before $N$, because it is then extended by zero.

Apply \cref{lem:finite-horizon-tracking} with this $N$ and $\tau$.  Shrink a radius $r\le r_{N,\tau}$, if necessary, so that the local gradient and value estimates hold on $B_r(x_*)$ and $\omega(r)<m_*$.  Consider any distinct pair with
\[
  \max\{\norm{e_{-1}},\norm{e_0}\}\le r.
\]
At the first frozen hitting time, the tracking estimate and
$\varepsilon_{N,\tau}(r)\le\tau/2$ give
\begin{align}
  \norm{e_q}
  \le\norm{\widehat e_q}
      +\norm{e_q-\widehat e_q}
  \le\left(\tau+\frac\tau2\right)\norm{e_0}
   =\frac38\gamma^N\norm{e_0}
  \le\gamma^q\norm{e_0}.
  \label{eq:one-block-sharp-contraction}
\end{align}
The same lemma also gives, for every $0\le j\le q$,
\begin{equation}\label{eq:one-block-growth}
  \norm{e_j}\le D_{N,\tau}\norm{e_0}.
\end{equation}
Thus a block of at most $N$ steps contracts its current error by a factor no greater than $\gamma^q$, while all intermediate errors are controlled by a fixed multiple of the error at the beginning of that block.

Let
\begin{equation}\label{eq:block-growth-D}
  D:=\max\{1,D_{N,\tau}\},
\end{equation}
and define
\begin{equation}\label{eq:nonlinear-radius-choice}
  r_\gamma=\frac rD,
  \qquad
  \overline r_\gamma=r=Dr_\gamma.
\end{equation}
The smaller initial radius $r_\gamma=r/D$ guarantees, through the within-block growth bound, that every block remains inside the larger ball.

Set $k_0=0$.  We construct consecutive block endpoints $k_i$ such that
\begin{equation}\label{eq:restart-induction-hypothesis}
  \norm{e_{k_i}}
  \le\gamma^{k_i}\norm{e_0}.
\end{equation}
For $i=0$ this is an equality, and the original pair
$(e_{-1},e_0)$ lies in the smaller ball and hence in the larger one.
Suppose a nonterminal block endpoint $k_i$ has been constructed.  For $i\ge1$, the point immediately preceding the restart belongs to the previous block.  If that block began at $k_{i-1}$, then \eqref{eq:one-block-growth} and \eqref{eq:restart-induction-hypothesis} give
\begin{align}
  \norm{e_{k_i-1}}
  &\le D\norm{e_{k_{i-1}}}
   \le D\gamma^{k_{i-1}}\norm{e_0}
   \le D\norm{e_0}
   \le r,
  \label{eq:restart-predecessor-radius}\\
  \norm{e_{k_i}}
  &\le\gamma^{k_i}\norm{e_0}
   \le\norm{e_0}
   \le r_\gamma
   \le r.
  \label{eq:restart-current-radius}
\end{align}
Thus the restarted pair lies in the larger ball.

If $e_{k_i}=0$, the method has terminated.  Otherwise the restarted pair is distinct.  Relabel the restarted pair as a new pair $(e_{-1}^{(i)},e_0^{(i)})=(e_{k_i-1},e_{k_i})$.  Its frozen first step is
\[
  \widehat\alpha_\ell(e_{k_i}-e_{k_i-1})\in I_\ell.
\]
Applying the same argument to the restarted pair produces
$q_i\in\{1,\ldots,N\}$.  Set
$k_{i+1}=k_i+q_i$.  Then
\begin{align}
  \norm{e_{k_{i+1}}}
  &\le\gamma^{q_i}\norm{e_{k_i}}
  \le\gamma^{k_{i+1}}\norm{e_0}.
  \label{eq:selected-subsequence-rate}
\end{align}
This closes the induction.

For an index $k_i\le k<k_{i+1}$, the within-block growth bound gives
\begin{align}
  \norm{e_k}
  \le D\norm{e_{k_i}}
  \le D\gamma^{k_i}\norm{e_0}
  =D\gamma^{-(k-k_i)}\gamma^k\norm{e_0}
  \le D\gamma^{-N}\gamma^k\norm{e_0}.
  \label{eq:within-block-rate}
\end{align}
In particular,
\[
  \norm{e_k}\le D\norm{e_0}\le Dr_\gamma=r
\]
for every $k\ge0$, while the two prescribed initial points are already in
$B_{r_\gamma}(x_*)$.  Therefore all iterates lie in
$B_{\overline r_\gamma}(x_*)$ and the block construction also proves that every quotient invoked by the nonlinear method is well defined and positive.

Taking
\begin{equation}\label{eq:explicit-Cgamma}
  C_\gamma=D\gamma^{-N}
\end{equation}
and using $\norm{e_0}\le R_0$ proves
\eqref{eq:nonlinear-error-envelope}.  On $B_r(x_*)$,
\eqref{eq:local-gradient-equivalence} gives
\[
  \norm{G(x_k)}
  \le(M_*+\omega(r))C_\gamma\gamma^kR_0,
\]
so one may take
\begin{equation}\label{eq:explicit-gradient-prefactor}
  \widetilde C_\gamma=(M_*+\omega(r))C_\gamma.
\end{equation}
Likewise, \eqref{eq:local-value-equivalence} gives
\[
  0\le F(x_k)-F(x_*)
  \le\frac{M_*+\omega(r)}2C_\gamma^2\gamma^{2k}R_0^2,
\]
so one may take
\begin{equation}\label{eq:explicit-value-prefactor}
  C_\gamma'=\frac{M_*+\omega(r)}2C_\gamma^2.
\end{equation}
Because $m_*-\omega(r)>0$, the lower value bound is strict away from
$x_*$, hence $x_*$ is the unique minimizer of $F$ in the larger ball.

Let a well-defined trajectory generated by the fixed rule converge to $x_*$.  Fix $\gamma\in(c_*,1)$.  If the trajectory does not terminate, then for all sufficiently large $K$ the consecutive pair
$(x_{K-1},x_K)$ lies in $B_{r_\gamma}(x_*)$.  Relabeling this consecutive pair as $(x_{-1},x_0)$ preserves its secant quotient, so the relabeled iteration is exactly the original tail.  The estimates above therefore give
\[
  \rho_e\le\gamma,
  \qquad
  \rho_g\le\gamma,
  \qquad
  \rho_F\le\gamma^2.
\]
Letting $\gamma\downarrow c_*$ proves \eqref{eq:nonlinear-root-bounds}.

For sharpness, assume that $m_*<M_*$, take $x_*=0$, and set
\[
  A_*=\diag(m_*,M_*)
  \qquad\text{on }\mathbb R^2.
\]
For the quadratic objective $Q(e)=\tfrac12\ip{A_*e}{e}$, the balanced BB1 endpoint orbit in \cref{prop:endpoint-orbit}, with $a=m_*$ and $b=M_*$, has exact gradient factor $c_*$.  Its BB2 counterpart is given by \cref{cor:weighted-main} with $\Psi(t)=t$ and the weighted balance \eqref{eq:weighted-balance}.  Thus no number smaller than $c_*$ is a valid uniform threshold.

For a genuinely nonquadratic example with the same sharp factor, fix $\ell\in\{1,2\}$ and write $e=(u,v)$.  In error coordinates, the endpoint orbit just cited alternates between
\[
  L_\pm^{(\ell)}=\operatorname{span}(1,\pm r_\ell),
  \qquad
  r_1=\frac{m_*}{M_*},
  \qquad
  r_2=\left(\frac{m_*}{M_*}\right)^{3/2}.
\]
Define the linear functionals by
\[
  \varphi_+^{(\ell)}(u,v)=v-r_\ell u,
  \qquad
  \varphi_-^{(\ell)}(u,v)=v+r_\ell u.
\]
Then $\ker\varphi_\pm^{(\ell)}=L_\pm^{(\ell)}$, and their product is the elementary quadratic polynomial
\[
  \varphi_+^{(\ell)}(u,v)\varphi_-^{(\ell)}(u,v)
  =v^2-r_\ell^2u^2.
\]
For any $\eta>0$, set
\[
  P_\ell(u,v)=\bigl(v^2-r_\ell^2u^2\bigr)^2,
  \qquad
  F_{\eta,\ell}(u,v)
  =\frac12\bigl(m_*u^2+M_*v^2\bigr)+\eta P_\ell(u,v).
\]
The polynomial $P_\ell$ is nonzero and homogeneous of degree four, so this objective is genuinely nonquadratic and belongs to $C^\infty$.  Moreover,
\[
  \nabla P_\ell(u,v)
  =4\bigl(v^2-r_\ell^2u^2\bigr)(-r_\ell^2u,v),
\]
so $P_\ell=0$ and $\nabla P_\ell=0$ on
$L_+^{(\ell)}\cup L_-^{(\ell)}$.  Since also $D^2P_\ell(0)=0$, one has $F_{\eta,\ell}''(0)=A_*$.  Thus $F_{\eta,\ell}$ satisfies the hypotheses of the theorem.  In addition,
\[
  F_{\eta,\ell}(e)
  \ge\frac{m_*}{2}\norm{e}^2,
\]
so the origin is its unique global minimizer.

Given $t>0$, choose
\[
  e_{-1}=t(1,r_\ell),
  \qquad
  e_0=t c_*(1,-r_\ell).
\]
These are two consecutive points of the cited quadratic endpoint orbit.  At every point of that orbit, $\nabla F_{\eta,\ell}(e)=A_*e$.  Hence the nonlinear gradient secants, BB$\ell$ quotients, and updates agree exactly with their quadratic counterparts, and induction gives
\[
  e_k=t c_*^{k+1}\bigl(1,(-1)^{k+1}r_\ell\bigr)
  \qquad(k\ge0).
\]
  Letting $t\downarrow0$ gives such a trajectory in every neighborhood of the minimizer.  Consequently this genuinely nonquadratic trajectory has
$\rho_e=\rho_g=c_*$ and $\rho_F=c_*^2$.
\end{proof}

\begin{corollary}\label{cor:C2-nonlinear}
Let $F\in C^2$ in a neighborhood of $x_*$ and assume
\[
\begin{aligned}
  G(x_*)&=0,\ A_*=F''(x_*)=A_*^*,\
  m_*:=\min\spec(A_*)>0,\
  M_*:=\max\spec(A_*).
\end{aligned}
\]
Then, for both pure BB1 and pure BB2, the local-envelope and convergent-trajectory conclusions of \cref{thm:nonlinear-localization} hold for $F$.  If $m_*<M_*$, the value $c_*$ is sharp already within the $C^\infty$ nonquadratic subclass constructed in the proof of \cref{thm:nonlinear-localization}.
\end{corollary}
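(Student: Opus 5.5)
The corollary follows by checking the hypotheses of \cref{thm:nonlinear-localization}. The only nontrivial one is the strict linearization condition \eqref{eq:strict-linearization}, with $A_*=F''(x_*)$. Sharpness then comes from the example already built at the end of that proof.

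\emph{Step 1: $C^1$ and self-adjointness.} Since $F\in C^2$ near $x_*$, the gradient $G$ is continuously Fr\'echet differentiable there, so $F$ is $C^1$. Moreover $G'(x)=F''(x)$, identified with a bounded self-adjoint operator through the Riesz representation, because the second derivative is a symmetric bilinear form. By hypothesis $A_*$ is uniformly positive with $m_*>0$, and $M_*=\norm{A_*}<\infty$.

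\emph{Step 2: the modulus.} Choose $r_0$ so that $B_{r_0}(x_*)$ lies in the $C^2$ neighborhood. For $x,y\in B_r(x_*)$ with $r\le r_0$, the segment $y+t(x-y)$ stays in the convex ball $B_r(x_*)$. The mean-value integral formula in Banach spaces gives
\[
  G(x)-G(y)-A_*(x-y)=\int_0^1\bigl(F''(y+t(x-y))-A_*\bigr)(x-y)\,dt .
\]
Hence
\[
  \omega(r)\le\sup_{z\in B_r(x_*)}\norm{F''(z)-F''(x_*)},
\]
which is finite for small $r$. It tends to $0$ as $r\downarrow0$ by continuity of $F''$ at $x_*$ in operator norm. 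This is exactly \eqref{eq:strict-linearization}. Decreasing $r_0$ so that $\omega(r_0)<m_*$ places us in the setting of \cref{thm:nonlinear-localization}. All local-envelope and convergent-trajectory conclusions then follow for both $\ell=1$ and $\ell=2$.

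\emph{Step 3: sharpness.} Assume $m_*<M_*$. The functions $F_{\eta,\ell}$ from the proof of \cref{thm:nonlinear-localization} are $C^\infty$ polynomials, so they belong to the $C^2$ class of the corollary. Each has $F_{\eta,\ell}''(0)=\diag(m_*,M_*)$, with exact endpoints $m_*$ and $M_*$, and is genuinely nonquadratic. Each also carries BB$\ell$ trajectories arbitrarily close to the minimizer with $\rho_e=\rho_g=c_*$ and $\rho_F=c_*^2$. Therefore $c_*$ cannot be lowered even within this subclass.

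The only step needing care is Step 2, namely the vector-valued mean-value estimate and the use of convexity of the ball to keep the segment in the domain. Everything else is a direct citation of \cref{thm:nonlinear-localization}.
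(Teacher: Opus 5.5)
Your proposal is correct and follows essentially the same route as the paper. The integral representation of $G(x)-G(y)-A_*(x-y)$ along the segment, together with operator-norm continuity of $F''$ at $x_*$, gives \eqref{eq:strict-linearization}, so \cref{thm:nonlinear-localization} applies to both rules; your Step~3 just makes explicit that sharpness comes from the $C^\infty$ examples $F_{\eta,\ell}$ already constructed in that theorem's proof, which the paper's proof leaves implicit.
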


\begin{proof}
Here $C^2$ means twice continuously Fr\'echet differentiable, so $F''$ is operator-norm continuous.  For $x,y$ near $x_*$, the line segment from $y$ to $x$ remains in the $C^2$ neighborhood and
\[
  G(x)-G(y)-A_*(x-y)
  =\int_0^1
  \bigl(F''(y+t(x-y))-A_*\bigr)(x-y)\,dt.
\]
Taking the supremum over a sufficiently small ball and using operator-norm continuity of $F''$ at $x_*$ gives \eqref{eq:strict-linearization}.
\end{proof}

\section{Concluding discussion}\label{sec:conclusion}

For each fixed pure BB rule on a finite-dimensional positive-definite quadratic, every positive first step has gradient root factor at most the active-endpoint constant, whose global value is $c_H=(\kappa(H)-1)/(\kappa(H)+1)$; when the global endpoints are distinct, matched initialization and endpoint balance attain $c_H$, which is also the matched uniform-envelope threshold.  This sharpens the earlier componentwise and Property~B estimates \cite{LiSun2021,LiHuang2025} by identifying the exact norm-level threshold.

In Hilbert space, the same $c_A=(M-m)/(M+m)$ is the sharp matched uniform-envelope threshold for each fixed rule, and a trajectory with an arbitrary positive first step is controlled by its own active scalar spectral support.  When $m<M$, the balanced endpoint state is realized by a vector if and only if both endpoint eigenspaces are nontrivial, while special operators with purely continuous spectrum can also have trajectories whose root factor equals $c_A$; see \cref{rem:hilbert-abstract-maximizer}.

For nonlinear objectives, every $\gamma\in(c_*,1)$ is a local envelope rate under strict Fr\'echet differentiability of the gradient, lowering the locally Lipschitz-Hessian regularity used in earlier complete fixed-horizon analyses \cite{DaiHagerSchittkowskiZhang2006,AzmiKunisch2022}.  The trajectory bounds allow an arbitrary initial secant: every well-defined trajectory converging to $x_*$ has error, gradient, and objective-gap root factors at most $c_*$, $c_*$, and $c_*^2$.  Over the class of objectives with prescribed distinct derivative endpoints $m_*<M_*$, matched endpoint trajectories for quadratic and $C^\infty$ genuinely nonquadratic examples in $\R^2$ attain these factors.

Alternative delayed spectral steps can first be compared through their normalized spectral dynamics.  Endpoint cycles and a coboundary certificate then diagnose the maximal invariant average.  The conjugacy results show that fixed positive spectral reweighting alone cannot lower the sharp threshold; any improvement must change the extremal dynamics.

\section*{Acknowledgements}
Shutai Yang thanks Shixiang Chen for supervising his undergraduate thesis at the University of Science and Technology of China, and Xiaowei Xu for supervising his project under the National Undergraduate Training Program for Innovation and Entrepreneurship at the same university.  The present article grew out of those two undergraduate projects and substantially extends their mathematical content.

\section*{Statements and Declarations}

\subsection*{Funding}
This work of Shutai Yang was supported by the National Undergraduate Training Program for Innovation and Entrepreneurship, administered by the University of Science and Technology of China (Project No.~202510358091).

\subsection*{Competing interests}
The authors have no relevant financial or non-financial interests to disclose.

\subsection*{Declaration on the use of generative AI}
During the preparation of this manuscript, Shutai Yang used OpenAI language models for language and \LaTeX{} editing, the presentation of mathematical notation, and the section-specific drafting and proof-development assistance disclosed above.  He independently checked and revised all model-assisted material, made all scholarly judgments and final decisions, and takes full responsibility for the manuscript.

\subsection*{Data and code availability}
No datasets were generated or analyzed, and no computational code was used in this entirely theoretical study.

\appendix
\section{Proofs for the Hilbert-space extension}\label{app:hilbert-proofs}

We now give the detailed proof of \cref{thm:hilbert-main} and its weighted and family-uniform consequences.

\subsection{The two-step dynamics of scalar spectral measures}\label{subsec:hilb-dynamics}

Fix a nonempty compact set $K\subset(0,\infty)$ and let $\cP(K)$ be the space of Borel probability measures on $K$, equipped with the weak topology.  The space
\begin{equation}\label{eq:hilb-XK}
  X_K=\cP(K)\times\cP(K)
\end{equation}
is compact and metrizable.  For $\zeta\in\cP(K)$, set
\begin{equation}\label{eq:hilb-u}
  u(\zeta)=\int_K\lambda\,\zeta(d\lambda).
\end{equation}
For $\chi=(\nu,\zeta)\in X_K$, define
\begin{equation}\label{eq:hilb-R-r}
  R_K(\nu,\zeta)
  =\int_K\left(1-\frac{\lambda}{u(\zeta)}\right)^2\nu(d\lambda),
  \qquad r_K(\nu,\zeta)=\sqrt{R_K(\nu,\zeta)}.
\end{equation}
On the regular set
\begin{equation}\label{eq:hilb-regular-set}
  X_K^{\rm reg}=\{\chi\in X_K:R_K(\chi)>0\},
\end{equation}
let
\begin{equation}\label{eq:hilb-T}
  T_K(\nu,\zeta)=(\nu^+,\nu),
  \qquad
  \nu^+(d\lambda)
  =\frac{(1-\lambda/u(\zeta))^2}{R_K(\nu,\zeta)}\,\nu(d\lambda).
\end{equation}
\begin{lemma}\label{lem:hilb-dynamics-continuity}
The maps $R_K$ and $r_K$ are continuous on $X_K$, and $T_K$ is continuous on $X_K^{\rm reg}$.
\end{lemma}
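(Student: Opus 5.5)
The plan is to check continuity of the three maps one at a time, using only that $K$ is compact in $(0,\infty)$. The weak topology on $\cP(K)$ is then characterized by convergence of integrals of continuous functions on $K$. The product $X_K$ carries the product topology, which is metrizable, so sequential continuity suffices throughout.

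\emph{Step 1: continuity of $u$.} The identity map $\lambda\mapsto\lambda$ is continuous and bounded on $K$, so $\zeta\mapsto u(\zeta)$ is weakly continuous. Moreover $u(\zeta)\in[\min K,\max K]$, hence $u$ is bounded away from zero by $\min K>0$.

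\emph{Step 2: continuity of $R_K$ and $r_K$.} Let $(\nu_j,\zeta_j)\to(\nu,\zeta)$ and set $u_j=u(\zeta_j)\to u=u(\zeta)$. Write
\[
  R_K(\nu_j,\zeta_j)-R_K(\nu,\zeta)
  =\int_K\bigl(\psi_j-\psi\bigr)\,d\nu_j
  +\Bigl(\int_K\psi\,d\nu_j-\int_K\psi\,d\nu\Bigr),
  \qquad
  \psi_j(\lambda)=\Bigl(1-\frac{\lambda}{u_j}\Bigr)^2,\quad
  \psi(\lambda)=\Bigl(1-\frac{\lambda}{u}\Bigr)^2.
\]
\begin{itemize}
  \item Because $u_j,u\ge\min K$ and $K$ is bounded, $\psi_j\to\psi$ uniformly on $K$. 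The first term is bounded by $\sup_K|\psi_j-\psi|$, so it tends to zero.
  \item The second term tends to zero by weak convergence, since $\psi$ is continuous on $K$.
\end{itemize}
Thus $R_K$ is continuous. Then $r_K=\sqrt{R_K}$ is continuous as a composition with the continuous square root on $[0,\infty)$.

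\emph{Step 3: continuity of $T_K$ on $X_K^{\rm reg}$.} The second component of $T_K$ is the projection $(\nu,\zeta)\mapsto\nu$, which is continuous. For the first component, take $\varphi\in C(K)$. Then
\[
  \int\varphi\,d\nu_j^+=\frac{\int\varphi\,\psi_j\,d\nu_j}{R_K(\nu_j,\zeta_j)}.
\]
\begin{itemize}
  \item The numerator converges to $\int\varphi\psi\,d\nu$ by the same uniform-plus-weak splitting as in Step 2, with $\varphi\psi_j\to\varphi\psi$ uniformly on $K$.
  \item The denominator converges to $R_K(\nu,\zeta)>0$ by Step 2, and this limit is positive exactly because the limit point lies in the regular set.
\end{itemize}
Hence $\nu_j^+\to\nu^+$ weakly, and $T_K$ is continuous on $X_K^{\rm reg}$.

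The only point requiring care is the uniform convergence $\psi_j\to\psi$, which is what lets the moving integrands be combined with the moving measures. It rests on the lower bound $u\ge\min K>0$ from Step 1. Everything else is routine.
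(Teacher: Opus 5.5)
Your proposal is correct and follows the paper's proof essentially step for step: continuity of $u$ with the lower bound $u\ge\min K>0$, uniform convergence of the squared multipliers combined with weak convergence to get continuity of $R_K$ and $r_K$, and the same splitting applied to $\varphi q_n$ together with a positive limiting denominator for $T_K$ on the regular set. Your explicit remark that the second component of $T_K$ is just the continuous projection is a small addition that the paper leaves implicit.
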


\begin{proof}
Let $(\nu_n,\zeta_n)\Rightarrow(\nu,\zeta)$ in $X_K$.  Since $\lambda\mapsto\lambda$ is continuous on the compact set $K$,
$u(\zeta_n)\to u(\zeta)$.  Moreover, $u(\zeta_n),u(\zeta)\ge\min K>0$, so the functions
\[
  q_n(\lambda)=\left(1-\frac{\lambda}{u(\zeta_n)}\right)^2
\]
converge uniformly on $K$ to $q(\lambda)=(1-\lambda/u(\zeta))^2$.  Therefore
\[
  \left|\int_K q_n\,d\nu_n-\int_K q\,d\nu\right|
  \le \norm{q_n-q}_\infty
      +\left|\int_K q\,d\nu_n-\int_K q\,d\nu\right|
  \longrightarrow0,
\]
which proves continuity of $R_K$ and hence of $r_K$.  If $R_K(\nu,\zeta)>0$, then $R_K(\nu_n,\zeta_n)>0$ for all sufficiently large $n$.  The same uniform-convergence argument applied to $\varphi q_n$ shows that, for every $\varphi\in C(K)$,
\[
  \int_K\varphi\,d\nu_n^+
  =\frac{\int_K\varphi q_n\,d\nu_n}{R_K(\nu_n,\zeta_n)}
  \longrightarrow
  \frac{\int_K\varphi q\,d\nu}{R_K(\nu,\zeta)}
  =\int_K\varphi\,d\nu^+.
\]
Thus $\nu_n^+\Rightarrow\nu^+$, proving continuity of $T_K$ on the regular set.
\end{proof}

\begin{lemma}\label{lem:hilb-recurrence}
Let $K\subset(0,\infty)$ be compact and contain $K(g_0)$, and view every scalar spectral measure supported in $K$ as an element of $\cP(K)$.  For every $k\ge1$ such that $g_k\ne0$ and $g_{k+1}\ne0$,
\begin{equation}\label{eq:hilb-measure-cocycle}
  (\nu_{k+1},\nu_k)=T_K(\nu_k,\nu_{k-1}),
  \qquad
  \frac{\norm{g_{k+1}}}{\norm{g_k}}
  =r_K(\nu_k,\nu_{k-1}),
  \qquad \nu_j=\nu_{g_j}.
\end{equation}
Under matched initialization, if the first update is nonterminal, the first normalized state is $\chi_0=(\nu_0,\nu_0)$ and $\chi_1=T_K\chi_0$.  For arbitrary $\alpha_0>0$, if the first update is nonterminal, the first state governed by $T_K$ is $\chi_1=(\nu_1,\nu_0)$; no preceding spectral measure is introduced.
\end{lemma}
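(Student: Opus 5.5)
The plan is to transport the finite-dimensional identities \eqref{eq:energy-rec}--\eqref{eq:state-and-cocycle} through the Borel functional calculus \eqref{eq:hilb-functional-calculus}, with scalar spectral measures playing the role of the normalized energy vectors. I will first check that every $\nu_j$ is supported in $K$. Since $g_j=p_j(A)g_0$ for the real polynomial $p_j(\lambda)=\prod_{i<j}(1-\alpha_i\lambda)$, one has $\norm{E_A(B)g_j}^2=\int_B p_j^2\,d\mu_{g_0}$, where $\mu_{g_0}(B)=\norm{E_A(B)g_0}^2$. Hence $\nu_{g_j}\ll\nu_{g_0}$, so $K(g_j)\subseteq K(g_0)\subseteq K$ whenever $g_j\ne0$. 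This lets every $\nu_j$ be regarded as an element of $\cP(K)$.

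The key computation is the stepsize. For $k\ge1$, the BB1 rule \eqref{eq:hilb-BB1} together with \eqref{eq:hilb-functional-calculus}, applied to $\varphi(\lambda)=\lambda$ (bounded on $\spec(A)$), gives
\[
  \alpha_k^{-1}=\frac{\ip{Ag_{k-1}}{g_{k-1}}}{\norm{g_{k-1}}^2}=u(\nu_{k-1}).
\]
Here $g_{k-1}\ne0$ because $g_k\ne0$. Next, $g_{k+1}=q(A)g_k$ with $q(\lambda)=1-\lambda/u(\nu_{k-1})$. For a Borel set $B$,
\[
  \norm{E_A(B)g_{k+1}}^2=\ip{(\ind_Bq^2)(A)g_k}{g_k}=\norm{g_k}^2\int_Bq^2\,d\nu_k,
\]
using that $E_A(B)$ commutes with $q(A)$ and is a self-adjoint projection. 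Taking $B=\spec(A)$ gives $\norm{g_{k+1}}^2/\norm{g_k}^2=R_K(\nu_k,\nu_{k-1})$, which is positive because $g_{k+1}\ne0$. Dividing the general-$B$ identity by $\norm{g_{k+1}}^2$ then gives $\nu_{k+1}=\nu^+$ as in \eqref{eq:hilb-T}. The second component of $T_K(\nu_k,\nu_{k-1})$ is $\nu_k$ by definition, so \eqref{eq:hilb-measure-cocycle} follows.

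For the initialization statements, I will use that under \eqref{eq:hilb-matched} we have $\alpha_0^{-1}=u(\nu_0)$. The same computation with $k=0$ and $\nu_{-1}:=\nu_0$ then gives $(\nu_1,\nu_0)=T_K(\nu_0,\nu_0)$ provided $g_1\ne0$, which is exactly $\chi_1=T_K\chi_0$. For arbitrary $\alpha_0$, the first step is not a Rayleigh quotient of any spectral measure, so no $\nu_{-1}$ is defined. The first state to which the recurrence above applies is $(\nu_1,\nu_0)$, and it is defined as soon as $g_1\ne0$.

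I do not expect a real obstacle here. The only points needing care are measure-theoretic: the support inclusion $K(g_j)\subseteq K$, which follows from absolute continuity, and the justification of $\norm{E_A(B)q(A)g}^2=\int_B q^2\,d\mu_g$ through the multiplicativity of the functional calculus.
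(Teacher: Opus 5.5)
Your proposal is correct and follows essentially the same route as the paper. You identify $\alpha_k^{-1}=u(\nu_{k-1})$ via the functional calculus, compute $\norm{E_A(B)g_{k+1}}^2=\norm{g_k}^2\int_B(1-\lambda/u(\nu_{k-1}))^2\,\nu_k(d\lambda)$, take $B$ to be the whole spectrum to get the multiplier, normalize to get $\nu_{k+1}$, and treat the matched case through $\alpha_0^{-1}=u(\nu_0)$. Your explicit check that $\nu_{g_j}\ll\nu_{g_0}$, which guarantees that every $\nu_j$ lies in $\cP(K)$, is recorded separately in the paper (as the remark that a multiplier cannot create spectral mass) rather than inside this proof, so including it is a harmless and useful addition.
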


\begin{proof}
For $k\ge1$, \eqref{eq:hilb-functional-calculus} gives $\alpha_k=1/u(\nu_{k-1})$.  Since
$g_{k+1}=(I-\alpha_kA)g_k$, the functional calculus gives, for every Borel set $B\subset K$,
\[
  \norm{E_A(B)g_{k+1}}^2
  =\norm{g_k}^2
   \int_B\left(1-\frac{\lambda}{u(\nu_{k-1})}\right)^2\nu_k(d\lambda).
\]
Taking $B=K$ gives the norm multiplier, and normalization gives the updated probability measure.  Under \eqref{eq:hilb-matched}, the same calculation for the first update uses $\alpha_0=1/u(\nu_0)$ and yields $\chi_1=T_K(\nu_0,\nu_0)$ whenever $g_1\ne0$.
\end{proof}

A multiplier cannot create spectral mass: \eqref{eq:hilb-T} gives $\nu^+\ll\nu$.  To encode finite termination, set
\[
  \Sigma_K=X_K\setminus X_K^{\rm reg}
\]
and adjoin an isolated point $\theta$.  On
\begin{equation}\label{eq:hilb-compactification}
  \widehat X_K=X_K\sqcup\{\theta\}
\end{equation}
define
\begin{equation}\label{eq:hilb-hat-map-potential}
  \widehat T_K\chi=
  \begin{cases}
    T_K\chi,&\chi\in X_K^{\rm reg},\\
    \theta,&\chi\in\Sigma_K\cup\{\theta\},
  \end{cases}
  \qquad
  \widehat f_K(\chi)=
  \begin{cases}
    \log r_K(\chi),&\chi\in X_K^{\rm reg},\\
    -\infty,&\chi\in\Sigma_K\cup\{\theta\}.
  \end{cases}
\end{equation}
Then $\widehat X_K$ is compact, $\widehat T_K$ is Borel and continuous outside $\Sigma_K$, and $\widehat f_K$ is upper semicontinuous and bounded above.

\subsection{Ergodic support reduction and endpoint bands}\label{subsec:hilb-bands}

For a $\widehat T_K$-invariant probability measure $\Pi$, the relevant case is
\begin{equation}\label{eq:hilb-finite-integral}
  \int\widehat f_K\,d\Pi> -\infty.
\end{equation}
As in the finite-dimensional argument, we first reduce an ergodic invariant measure to a single persistent support.  The finite collection of coordinate faces is replaced by the support of the barycenter of the first-coordinate marginal.

\begin{lemma}\label{lem:hilb-persistent-support}
Let $\Pi$ be an ergodic $\widehat T_K$-invariant Borel probability measure satisfying \eqref{eq:hilb-finite-integral}.  Then there exist a compact set $S\subset K$ with at least two points and a $\Pi$-full, forward-invariant Borel set $Y_S^\infty$ such that every
$\chi=(\nu,\zeta)\in Y_S^\infty$ satisfies
\begin{equation}\label{eq:hilb-equal-support}
  \supp\nu=\supp\zeta=S,
\end{equation}
and the same equality holds at every forward iterate of $\chi$.
\end{lemma}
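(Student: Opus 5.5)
The plan follows the proof of \cref{lem:support-face}, with the finite family of coordinate faces replaced by closed sets of the spectrum coming from a countable base. First, the argument of \cref{lem:avoid-sing} transfers verbatim to $\widehat T_K$. It gives $\Pi(\Sigma_K\cup\{\theta\})=0$, because $\widehat T_K^{-1}\{\theta\}=\Sigma_K\cup\{\theta\}$ and the integral in \eqref{eq:hilb-finite-integral} is finite. Next, the marginal identity \eqref{eq:marginals-equal} transfers as well: on $X_K^{\rm reg}$ the second output of $T_K(\nu,\zeta)$ is $\nu$, so the two coordinate marginals $(\pi_1)_*\Pi$ and $(\pi_2)_*\Pi$ on $\cP(K)$ coincide.

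The key monotonicity is that $\nu^+\ll\nu$ and $\nu\ll\nu^+$ up to at most finitely many atoms. The density in \eqref{eq:hilb-T} vanishes only at $\lambda=u(\zeta)$, so $\supp\nu^+\subseteq\supp\nu$. Moreover $\supp\nu^+=\supp\nu$ unless $u(\zeta)$ is an isolated atom of $\nu$, in which case only that single point is lost. Now fix a countable base $\{O_n\}$ of open subsets of $K$. For each $n$ put $A_n=\{\chi=(\nu,\zeta):\nu(O_n)=0\}\cup\{\theta\}$. By $\nu^+\ll\nu$ this set is forward invariant, and it is Borel because $\nu\mapsto\nu(O_n)$ is lower semicontinuous. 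Exactly as in \cref{lem:support-face}, forward invariance together with invariance of $\Pi$ makes $A_n$ invariant mod $\Pi$, and ergodicity then forces $\Pi(A_n)\in\{0,1\}$. Set $S=K\setminus\bigcup\{O_n:\Pi(A_n)=1\}$. This is compact, and $\Pi$-a.e.\ $\chi$ satisfies $\supp\nu=S$: the inclusion $\subseteq$ holds because countably many null-mass conditions hold a.s.; the inclusion $\supseteq$ holds because every $O_n$ meeting $S$ has $\Pi(A_n)=0$, hence $\nu(O_n)>0$ a.s. Equality of the marginals gives $\supp\zeta=S$ a.s. as well. If $S$ were a singleton $\{\lambda\}$, then $\nu=\zeta=\delta_\lambda$ and $R_K=0$, which contradicts $\Pi(\Sigma_K)=0$. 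Hence $|S|\ge2$.

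Finally, let $Y_S=\{(\nu,\zeta)\in X_K^{\rm reg}:\supp\nu=\supp\zeta=S\}$, which has full measure. Define
\[
  Y_S^\infty=\bigcap_{n\ge0}\widehat T_K^{-n}Y_S .
\]
This set still has full measure by invariance and countable intersection, and it is forward invariant by construction. By definition, every iterate of a point of $Y_S^\infty$ lies in $Y_S$, which is exactly \eqref{eq:hilb-equal-support} along the whole forward orbit.

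I expect the main obstacle to be measurability: showing that $Y_S$ is Borel in $\cP(K)^2$. My approach is to write $\{\supp\nu=S\}$ as a countable Boolean combination of the sets $\{\nu(O_n)=0\}$ and $\{\nu(O_n)>0\}$, indexed by whether $O_n$ meets $S$. Each of these sets is Borel by lower semicontinuity of $\nu\mapsto\nu(O_n)$. A secondary subtlety is the possible loss of an isolated atom at $u(\zeta)$. This causes no difficulty, since the full-measure intersection already pins the support to $S$ at every time.
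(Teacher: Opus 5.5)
Your proof is correct and follows essentially the same route as the paper: a zero-one law from ergodicity for the forward-invariant events $\{\nu(O_n)=0\}$ (the paper uses the indicators $\ind_{\{\nu(U)>0\}}$), equality of the coordinate marginals to transfer the support to $\zeta$, exclusion of a singleton $S$ via $\Pi(\Sigma_K)=0$, and the countable-base description of $Y_S$ together with $Y_S^\infty=\bigcap_n\widehat T_K^{-n}Y_S$. The only difference is cosmetic: you define $S$ as the complement of the union of the a.s.-null basic open sets, whereas the paper takes $S=\supp\overline\nu$ for the barycenter $\overline\nu$ of the first marginal, which gives the same set; your side remark that $\nu\ll\nu^+$ fails at most at the single point $u(\zeta)$ is not needed.
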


\begin{proof}
Since
$\widehat T_K^{-1}(\{\theta\})=\Sigma_K\cup\{\theta\}$,
invariance gives $\Pi(\Sigma_K)=0$.  The finite integral in \eqref{eq:hilb-finite-integral} excludes positive mass at $\theta$, and hence
\begin{equation}\label{eq:hilb-boundary-null}
  \Pi(\Sigma_K\cup\{\theta\})=0.
\end{equation}

Let $\mathcal U$ be a countable base for the relative topology of $K$.  For $U\in\mathcal U$, define
\[
  b_U(\nu,\zeta)=\ind_{\{\nu(U)>0\}},
  \qquad b_U(\theta)=0.
\]
The map $b_U$ is Borel.  Since the first coordinate $\nu^+$ of $T_K(\nu,\zeta)$ satisfies $\nu^+\ll\nu$, one has
\[
  b_U\circ\widehat T_K\le b_U.
\]
Invariance makes the integrals of the two sides equal, so
$b_U\circ\widehat T_K=b_U$ almost surely.  Ergodicity therefore implies that $b_U$ is almost surely equal to either zero or one.

Define the barycenter of the first-coordinate marginal by
\begin{equation}\label{eq:hilb-ergodic-barycenter}
  \overline\nu(B)=\int_{X_K}\nu(B)\,d\Pi(\nu,\zeta),
  \qquad B\subset K\text{ Borel},
\end{equation}
and set $S=\supp\overline\nu$.  For every $U\in\mathcal U$,
\[
  \overline\nu(U)>0
  \quad\Longleftrightarrow\quad
  \Pi\{\nu(U)>0\}>0
  \quad\Longleftrightarrow\quad
  \nu(U)>0\ \text{almost surely}.
\]
After intersecting over the countable base, the support characterization by positive-mass neighborhoods gives
\begin{equation}\label{eq:hilb-first-support-fixed}
  \supp\nu=S
  \qquad\text{almost surely}.
\end{equation}

Extend the coordinate projections $\pi_1,\pi_2$ to $\theta$ by assigning them the same arbitrary element of $\cP(K)$.  On $X_K^{\rm reg}$,
$\pi_2\circ\widehat T_K=\pi_1$; therefore \eqref{eq:hilb-boundary-null} and invariance give
\begin{equation}\label{eq:hilb-marginals-equal}
  (\pi_2)_*\Pi=(\pi_1)_*\Pi.
\end{equation}
Thus, for every $U\in\mathcal U$, the events $\{\zeta(U)>0\}$ and $\{\nu(U)>0\}$ have the same probability.  Combining this with the preceding zero-one conclusion and again intersecting over $\mathcal U$ yields
$\supp\zeta=S$ almost surely.

The set
\[
  Y_S=\{(\nu,\zeta)\in X_K^{\rm reg}:
        \supp\nu=\supp\zeta=S\}
\]
is Borel; this follows from the same countable-base characterization of support.  Since $\Pi(Y_S)=1$, the set
\begin{equation}\label{eq:hilb-persistent-support-core}
  Y_S^\infty=\bigcap_{n\ge0}\widehat T_K^{-n}Y_S
\end{equation}
is forward invariant and has full measure.  Finally, if $S=\{\lambda\}$, then $\nu=\zeta=\delta_\lambda$ on $Y_S$, which gives $R_K=0$, contrary to $Y_S\subset X_K^{\rm reg}$.  Hence $S$ has at least two points.
\end{proof}

\begin{lemma}\label{lem:hilb-restricted-weak}
Let $S$ be a closed subset of the compact metric space $K$.  Suppose that $\nu_n,\nu\in\cP(K)$ are supported on $S$ and $\nu_n\Rightarrow\nu$ in $\cP(K)$.  Then $\nu_n\Rightarrow\nu$ in $\cP(S)$.  Consequently,
\begin{equation}\label{eq:relative-continuity-set}
  \nu_n(B)\longrightarrow\nu(B)
\end{equation}
for every Borel set $B\subset S$ whose relative boundary in $S$ has zero $\nu$-measure.
\end{lemma}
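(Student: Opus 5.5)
The plan is to reduce weak convergence on $\cP(S)$ to weak convergence on $\cP(K)$ by extending test functions from $S$ to $K$, and then to invoke the Portmanteau theorem on the compact metric space $S$.

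First, fix $\varphi\in C(S)$. Since $S$ is a closed subset of the compact metric space $K$, the Tietze extension theorem gives $\widetilde\varphi\in C(K)$ with $\widetilde\varphi|_S=\varphi$. Because $\nu_n$ and $\nu$ are supported on $S$, hence give full mass to $S$,
\[
  \int_S\varphi\,d\nu_n=\int_K\widetilde\varphi\,d\nu_n
  \longrightarrow\int_K\widetilde\varphi\,d\nu=\int_S\varphi\,d\nu .
\]
This is exactly $\nu_n\Rightarrow\nu$ in $\cP(S)$, where each measure is identified with its restriction to the Borel $\sigma$-algebra of $S$. That $\sigma$-algebra coincides with the trace of the Borel sets of $K$ on $S$, since $S$ carries the subspace topology; so the identification is legitimate.

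Second, $S$ is itself a compact metric space. The standard Portmanteau theorem, applied in $\cP(S)$, says that weak convergence implies $\nu_n(B)\to\nu(B)$ for every Borel $B\subset S$ whose boundary, computed in $S$, is $\nu$-null. This is precisely \eqref{eq:relative-continuity-set}.

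The only point requiring care is the first step. One must note that a function continuous on $S$ need not be the restriction of anything canonical, so the Tietze extension is needed to transfer the convergence. One must also stress that the relevant boundary is the relative one. The relative boundary can be much smaller than the boundary in $K$: for example, $\partial_S S=\emptyset$, whereas $\partial_K S$ may be nonempty. This is why the restriction lemma is useful later for endpoint bands of the persistent support $S$ from \cref{lem:hilb-persistent-support}. No other obstacle is expected.
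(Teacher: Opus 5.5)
Your proposal is correct and follows essentially the same route as the paper: extend each $\varphi\in C(S)$ continuously to $K$ (Tietze) to transfer weak convergence from $\cP(K)$ to $\cP(S)$, then apply the Portmanteau continuity-set criterion on $S$. Your added remarks, identifying the Borel $\sigma$-algebra of $S$ with the trace $\sigma$-algebra and noting that the relative boundary can be strictly smaller than the boundary in $K$, are accurate details that the paper leaves implicit.
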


\begin{proof}
Every function in $C(S)$ extends continuously to $K$.  Since all the measures are supported on $S$, weak convergence on $K$ therefore implies weak convergence on $S$.  The second assertion is the continuity-set consequence of weak convergence on $S$.
\end{proof}

Let $S\subset(0,\infty)$ be compact with endpoints $a<b$, and put
\begin{equation}\label{eq:hilb-pq}
  p=\frac{b}{a+b},\qquad q=\frac{a}{a+b}.
\end{equation}
For $0<\delta<(b-a)/2$, define
\begin{equation}\label{eq:hilb-endpoint-bands}
  L_\delta(S)=S\cap[a,a+\delta],
  \qquad
  U_\delta(S)=S\cap[b-\delta,b].
\end{equation}
If $\supp\nu=S$, both masses are positive.  Set
\begin{equation}\label{eq:hilb-band-transfer}
  h_{\delta,S}(\nu)
  =-\frac p2\log\nu(L_\delta(S))
   -\frac q2\log\nu(U_\delta(S)),
\end{equation}
\begin{equation}\label{eq:hilb-band-Gamma}
  \Gamma_{\delta,S}(u)
  =\left[
      \sup_{\lambda\in L_\delta(S)}
      \left|1-\frac{\lambda}{u}\right|
    \right]^p
    \left[
      \sup_{\lambda\in U_\delta(S)}
      \left|1-\frac{\lambda}{u}\right|
    \right]^q,
\end{equation}
and
\begin{equation}\label{eq:hilb-band-C}
  C_{\delta,S}=\sup_{u\in[a,b]}\Gamma_{\delta,S}(u).
\end{equation}

\begin{lemma}\label{lem:hilb-band-certificate}
Let $\chi=(\nu,\zeta)\in X_K^{\rm reg}$ satisfy
$\supp\nu=\supp\zeta=S$, with endpoints $a<b$, and write
$T_K\chi=(\nu^+,\nu)$.  Then
\begin{equation}\label{eq:hilb-band-certificate}
  \log r_K(\chi)
  \le \log\Gamma_{\delta,S}(u(\zeta))
     +h_{\delta,S}(\nu^+)-h_{\delta,S}(\nu)
\end{equation}
and therefore
\begin{equation}\label{eq:hilb-band-uniform}
  \log r_K(\chi)
  \le \log C_{\delta,S}
     +h_{\delta,S}(\nu^+)-h_{\delta,S}(\nu).
\end{equation}
Moreover,
\begin{equation}\label{eq:hilb-band-limit}
  C_{\delta,S}\longrightarrow\frac{b-a}{b+a}
  \qquad(\delta\downarrow0).
\end{equation}
\end{lemma}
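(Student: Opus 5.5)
The plan is to repeat the computation of \cref{lem:cohomology}, replacing the endpoint coordinates by endpoint bands and replacing equalities by inequalities. Write $u=u(\zeta)$. Since $\supp\zeta=S$ has at least two points, $u\in(a,b)$. Since $\supp\nu=S$, both band masses $\nu(L_\delta(S))$ and $\nu(U_\delta(S))$ are positive, because each band contains a relative neighbourhood of an endpoint of $S$. The multiplier $(1-\lambda/u)^2$ is strictly positive off the single point $\lambda=u$. Hence $\nu^+(L_\delta(S))>0$ and $\nu^+(U_\delta(S))>0$ as well: if $u$ lay in a band, that band still carries $\nu$-mass away from $u$ near the endpoint, and otherwise there is nothing to check. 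So both values of $h_{\delta,S}$ are finite.

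From \eqref{eq:hilb-T} and $R=R_K(\chi)=r_K(\chi)^2$,
\[
  \nu^+(L_\delta(S))=\frac1R\int_{L_\delta(S)}\Bigl(1-\frac\lambda u\Bigr)^2\nu(d\lambda)\le\frac{\ell^2}{R}\,\nu(L_\delta(S)),
  \qquad \ell:=\sup_{\lambda\in L_\delta(S)}\Bigl|1-\frac\lambda u\Bigr|,
\]
and likewise $\nu^+(U_\delta(S))\le (\upsilon^2/R)\,\nu(U_\delta(S))$ with $\upsilon$ the corresponding supremum over $U_\delta(S)$. Taking $-\frac p2\log$ of the first bound and $-\frac q2\log$ of the second, adding, and using $p+q=1$ gives
\[
  h_{\delta,S}(\nu^+)-h_{\delta,S}(\nu)\ \ge\ \tfrac12\log R-p\log\ell-q\log\upsilon=\log r_K(\chi)-\log\Gamma_{\delta,S}(u).
\]
This is \eqref{eq:hilb-band-certificate}. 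If $\ell$ or $\upsilon$ is zero, the same inequality forces $\nu^+$ of that band to vanish, which contradicts the positivity just shown; so this case cannot occur. Inequality \eqref{eq:hilb-band-uniform} then follows from $u\in[a,b]$ and the definition of $C_{\delta,S}$.

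For the limit \eqref{eq:hilb-band-limit}, the lower bound is immediate. Both bands contain their endpoint, so $\Gamma_{\delta,S}(u)\ge|1-a/u|^p|1-b/u|^q$. At $u=(a+b)/2$ this equals $c=(b-a)/(b+a)$ by \cref{lem:endpoint-ineq}, so $C_{\delta,S}\ge c$.

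The upper bound is the main obstacle, because the supremum over $u$ interacts with the sup inside each band. I would prove it by uniform convergence. The inner suprema are bounded by $\max\{|1-a/u|,|1-(a+\delta)/u|\}$ and $\max\{|1-b/u|,|1-(b-\delta)/u|\}$. These converge to $|1-a/u|$ and $|1-b/u|$ uniformly in $u\in[a,b]$, since $|\partial_\lambda(1-\lambda/u)|\le 1/a$.

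The maps $t\mapsto t^p$ and $t\mapsto t^q$ are uniformly continuous on the bounded range $[0,(b-a)/a+\delta/a]$. Therefore $\Gamma_{\delta,S}$ converges uniformly on $[a,b]$ to the left-hand side of \eqref{eq:endpoint-ineq}, whose maximum is $c$ by \cref{lem:endpoint-ineq}. This gives $\limsup_{\delta\downarrow0}C_{\delta,S}\le c$, which together with the lower bound completes the proof.
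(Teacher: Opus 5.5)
Your proposal is correct and follows essentially the paper's route. The certificate comes from comparing the endpoint-band masses of $\nu^+$ and $\nu$; the paper writes this as an exact identity with conditional averages of $(1-\lambda/u)^2$ and then bounds those averages by the squared suprema, which is what you do in one step. The limit comes from uniform convergence of the band suprema to $(u-a)/u$ and $(b-u)/u$ together with \cref{lem:endpoint-ineq}; your separate lower bound on $C_{\delta,S}$ is harmless but already implied by that uniform convergence.
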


\begin{proof}
Write $u=u(\zeta)$ and $Q_u(\lambda)=(1-\lambda/u)^2$.  Since $\supp\zeta=S$ has distinct endpoints, $u\in(a,b)$.  The two band masses of $\nu$ are positive.  Moreover, $Q_u(a)>0$ and $Q_u(b)>0$; continuity and the support condition show that the conditional averages $\overline Q_L$ and $\overline Q_U$ of $Q_u$ over the two endpoint bands are strictly positive.

From \eqref{eq:hilb-T},
\[
  \nu^+(L_\delta(S))
  =\frac{\nu(L_\delta(S))\overline Q_L}{R_K(\chi)},
  \qquad
  \nu^+(U_\delta(S))
  =\frac{\nu(U_\delta(S))\overline Q_U}{R_K(\chi)}.
\]
Substitution into \eqref{eq:hilb-band-transfer} gives
\[
  h_{\delta,S}(\nu^+)-h_{\delta,S}(\nu)
  =\log r_K(\chi)-\frac p2\log\overline Q_L
                    -\frac q2\log\overline Q_U.
\]
Bounding each conditional average by the square of the corresponding supremum proves \eqref{eq:hilb-band-certificate}; taking the supremum in $u$ gives \eqref{eq:hilb-band-uniform}.

For $\lambda\in L_\delta(S)$ and $u\in[a,b]$,
\[
  \left|
    \left|1-\frac{\lambda}{u}\right|-\frac{u-a}{u}
  \right|\le\frac{\delta}{a},
\]
and the analogous estimate holds on the upper band.  Hence the two endpoint suprema converge uniformly to $(u-a)/u$ and $(b-u)/u$.  Uniform continuity of $(s,t)\mapsto s^pt^q$ on a common compact range gives uniform convergence of $\Gamma_{\delta,S}$ to
\[
  \left(\frac{u-a}{u}\right)^{b/(a+b)}
  \left(\frac{b-u}{u}\right)^{a/(a+b)}.
\]
Taking suprema and applying \cref{lem:endpoint-ineq} proves \eqref{eq:hilb-band-limit}.
\end{proof}

When $\nu$ has positive atoms at $a$ and $b$, singleton endpoint bands recover the finite-dimensional coboundary \eqref{eq:cohomology}.  The endpoint-band certificate is its continuous-support analogue.

\subsection{Invariant averages and semi-uniformity}\label{subsec:hilb-invariant}

Throughout this subsection, the ambient set $K$ is fixed.  The symbols $\nu,\zeta\in\cP(K)$ denote coordinate measures, whereas $\Pi$ denotes an invariant probability measure on the state space $\widehat X_K$; the persistent support $S\subset K$ may depend on the ergodic component of $\Pi$.

\begin{proposition}\label{prop:hilb-invariant-bound}
Let $K\subset(0,\infty)$ be compact with at least two points.  Every $\widehat T_K$-invariant probability measure $\Pi$ obeys
\begin{equation}\label{eq:hilb-invariant-bound}
  \int\widehat f_K\,d\Pi\le\log c(K).
\end{equation}
Here the integral is understood in the extended sense.
\end{proposition}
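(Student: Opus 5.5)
The argument mirrors \cref{lem:measure-bound}, with endpoint coordinates replaced by the endpoint bands of \cref{lem:hilb-band-certificate}. First dispose of the trivial cases. If $\int\widehat f_K\,d\Pi=-\infty$ there is nothing to prove. Otherwise, apply the ergodic decomposition to the truncations $\max\{\widehat f_K,-M\}$ and let $M\to\infty$. As in \cref{lem:measure-bound}, this reduces the claim to an ergodic $\Pi$ with finite integral. \Cref{lem:hilb-persistent-support} then supplies a compact $S\subset K$ with at least two points, endpoints $a<b$, and a $\Pi$-full forward-invariant set $Y_S^\infty$. Along the forward orbit of every point of this set, both coordinates have support exactly $S$. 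Since $S\subset K$, we have $[a,b]\subseteq[\min K,\max K]$, so $(b-a)/(b+a)\le c(K)$. It therefore suffices to show $\int\widehat f_K\,d\Pi\le\log C_{\delta,S}$ for every small $\delta>0$ and then let $\delta\downarrow0$ using \eqref{eq:hilb-band-limit}.

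Fix $\delta\in(0,(b-a)/2)$. Because $\widehat f_K\in L^1(\Pi)$, choose $\chi=(\nu_0,\zeta_0)\in Y_S^\infty$ that is Birkhoff-generic for $\widehat f_K$ and recurrent, and write $\chi_k=\widehat T_K^k\chi=(\nu_k,\nu_{k-1})$. Every $\chi_k$ lies in $X_K^{\rm reg}$ and has both supports equal to $S$, so \eqref{eq:hilb-band-uniform} applies at every iterate. Summing it from $k=0$ to $n-1$ telescopes the transfer terms:
\[
  \sum_{k=0}^{n-1}\widehat f_K(\chi_k)\le n\log C_{\delta,S}+h_{\delta,S}(\nu_n)-h_{\delta,S}(\nu_0).
\]
Divide by $n=n_\ell$ along recurrence times $\chi_{n_\ell}\to\chi$. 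The conclusion follows once $\liminf_\ell h_{\delta,S}(\nu_{n_\ell})/n_\ell\le0$; it is enough that $h_{\delta,S}(\nu_{n_\ell})$ stays bounded above.

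The main obstacle is exactly this last step. Unlike the finite-dimensional case, $h_{\delta,S}$ is not weakly continuous: band masses are only upper semicontinuous under weak convergence, so a limit could in principle have mass escaping the closed bands. My first attempt uses \cref{lem:hilb-restricted-weak}. The bands are closed in $S$, and their relative boundaries are at most the points $a+\delta$ and $b-\delta$. This holds for all but countably many $\delta$, and it holds uniformly over the countable family of measures we care about. So I restrict to $\delta$ outside the countable set of atoms of $\nu_0$ (or of the barycenter $\overline\nu$). Then $\nu_{n_\ell}(L_\delta(S))\to\nu_0(L_\delta(S))>0$, and similarly for $U_\delta(S)$, by \eqref{eq:relative-continuity-set}. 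Hence $h_{\delta,S}(\nu_{n_\ell})\to h_{\delta,S}(\nu_0)$, which is finite, and the boundary term vanishes after division by $n_\ell$. Letting $\delta\downarrow0$ along admissible values gives $\int\widehat f_K\,d\Pi\le\log\frac{b-a}{b+a}\le\log c(K)$ for ergodic $\Pi$. Averaging over the ergodic decomposition yields \eqref{eq:hilb-invariant-bound}.

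A fallback, in case the continuity-set argument is delicate, is to avoid recurrence entirely. Even without continuity, $\nu_n(L_\delta)\le1$ gives $h_{\delta,S}(\nu_n)\ge0$, which controls only the wrong sign, so instead one integrates the inequality \eqref{eq:hilb-band-uniform} against $\Pi$. This is legitimate only if $h_{\delta,S}\in L^1(\Pi)$, which is not guaranteed. That is why the recurrent-point argument with admissible $\delta$ is my primary route.
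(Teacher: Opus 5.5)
Your proposal is correct and follows the paper's proof essentially step for step. The shared steps are the reduction to an ergodic $\Pi$ with finite average, the persistent support $S$ and full-measure set $Y_S^\infty$ from \cref{lem:hilb-persistent-support}, a recurrent Birkhoff-generic point, band widths $\delta$ chosen off the atoms of $\nu_0$ so that \cref{lem:hilb-restricted-weak} gives $h_{\delta,S}(\nu_{n_\ell})\to h_{\delta,S}(\nu_0)$, and finally $\delta\downarrow0$ via \eqref{eq:hilb-band-limit}; the fallback of integrating \eqref{eq:hilb-band-uniform} against $\Pi$ is unnecessary.
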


\begin{proof}
Since $\widehat f_K$ is bounded above, its integral is well defined in $[-\infty,\infty)$.  If $\int\widehat f_K\,d\Pi=-\infty$, the assertion is immediate.  Hence assume \eqref{eq:hilb-finite-integral}.

Let $\Pi=\int\eta\,d\Lambda(\eta)$ be the ergodic decomposition.  Applying the decomposition to the bounded truncations $\max\{\widehat f_K,-M\}$ and then letting $M\to\infty$ gives
\[
  \int\widehat f_K\,d\Pi
  =\int\left(\int\widehat f_K\,d\eta\right)d\Lambda(\eta).
\]
Since $\widehat f_K$ is bounded above and the left-hand side is finite, almost every ergodic component has finite logarithmic average.  It is therefore enough to prove \eqref{eq:hilb-invariant-bound} for an ergodic $\Pi$ satisfying \eqref{eq:hilb-finite-integral}.

Let $S$ and $Y_S^\infty$ be given by \cref{lem:hilb-persistent-support}, and write
\[
  a=\min S,\qquad b=\max S.
\]
Choose a point $\chi=(\nu_0,\zeta_0)\in Y_S^\infty$ that is topologically recurrent and Birkhoff generic for $\widehat f_K$.  Thus there are $n_\ell\to\infty$ such that
$\widehat T_K^{n_\ell}\chi\to\chi$, and
\[
  \frac1n\sum_{k=0}^{n-1}\widehat f_K(\widehat T_K^k\chi)
  \longrightarrow\int\widehat f_K\,d\Pi.
\]
Write $\widehat T_K^n\chi=(\nu_n,\zeta_n)$.  Every $\nu_n$ and $\zeta_n$ has support $S$.

Choose $\delta_j\downarrow0$, with $0<\delta_j<(b-a)/2$, such that
\[
  \nu_0(\{a+\delta_j\})
  =\nu_0(\{b-\delta_j\})=0.
\]
Along the recurrence subsequence, $\nu_{n_\ell}\Rightarrow\nu_0$ in $\cP(K)$.  By \cref{lem:hilb-restricted-weak}, the convergence also holds in $\cP(S)$.  The endpoint bands are continuity sets for $\nu_0$, and therefore
\[
  h_{\delta_j,S}(\nu_{n_\ell})
  \longrightarrow h_{\delta_j,S}(\nu_0).
\]
Summing \eqref{eq:hilb-band-uniform} from $0$ to $n_\ell-1$, dividing by $n_\ell$, and letting $\ell\to\infty$ gives
\[
  \int\widehat f_K\,d\Pi\le\log C_{\delta_j,S}.
\]
Finally let $j\to\infty$.  By \eqref{eq:hilb-band-limit},
\[
  \int\widehat f_K\,d\Pi
  \le\log c(S)\le\log c(K).
\]
The bound holds on almost every ergodic component and hence for the original invariant measure.
\end{proof}

For the extremal statement, write
\[
  a=\min K,\qquad b=\max K,\qquad m_K=\frac{a+b}{2},
\]
and define
\begin{equation}\label{eq:hilb-balanced-state}
  \nu_K^*=\frac12\delta_a+\frac12\delta_b,
  \qquad \chi_K^*=(\nu_K^*,\nu_K^*).
\end{equation}

\begin{proposition}\label{prop:hilb-unique-maximizer}
Let $K\subset(0,\infty)$ be compact with at least two points.  The maximal invariant average of $\widehat f_K$ equals $\log c(K)$, and the unique maximizing invariant probability measure on $\widehat X_K$ is
\begin{equation}\label{eq:hilb-unique-maximizer}
  \delta_{\chi_K^*}.
\end{equation}
\end{proposition}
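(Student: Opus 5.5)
The plan has three steps: bound every invariant average by \cref{prop:hilb-invariant-bound}, check that $\delta_{\chi_K^*}$ attains the bound, and then show that any maximizing ergodic measure equals $\delta_{\chi_K^*}$. The last step imitates \cref{prop:unique}, with the endpoint-band certificate of \cref{lem:hilb-band-certificate} replacing the exact coboundary.

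\emph{Upper bound and attainment.} \Cref{prop:hilb-invariant-bound} already gives $\int\widehat f_K\,d\Pi\le\log c(K)$ for every invariant $\Pi$. For attainment, set $a=\min K$, $b=\max K$. Then $u(\nu_K^*)=m_K$, and at both atoms
\[
\bigl(1-\lambda/m_K\bigr)^2=c(K)^2 .
\]
Hence $R_K(\chi_K^*)=c(K)^2>0$, the update preserves the weights $1/2$, and $T_K\chi_K^*=\chi_K^*$. So $\delta_{\chi_K^*}$ is invariant, and its integral of $\widehat f_K$ is $\log c(K)$.

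\emph{Uniqueness, ergodic case.} Let $\Pi$ be ergodic with $\int\widehat f_K\,d\Pi=\log c(K)$, and let $S$ and $Y_S^\infty$ be as in \cref{lem:hilb-persistent-support}.

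First, the proof of \cref{prop:hilb-invariant-bound} yields $\int\widehat f_K\,d\Pi\le\log c(S)$. Since $c(S)<c(K)$ unless $S$ has the same endpoints, this forces $\min S=a$ and $\max S=b$.

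Second, I want $u(\zeta)=m_K$ almost surely. Sum the sharper inequality \eqref{eq:hilb-band-certificate} along a recurrent, Birkhoff-generic point $\chi\in Y_S^\infty$, passing the $h_{\delta,S}$ terms to the limit through continuity-set bands as before. This gives
\[
\log c(K)\le\int\log\Gamma_{\delta,S}(u(\zeta))\,d\Pi(\nu,\zeta).
\]
The integrand is a bounded-above continuous function of $\zeta$, so Birkhoff applies after truncation. As $\delta\downarrow0$, $\Gamma_{\delta,S}$ converges uniformly to the left side of \eqref{eq:endpoint-ineq}. In the limit,
\[
\int\log\Bigl[\Bigl(\tfrac{u-a}{u}\Bigr)^{p}\Bigl(\tfrac{b-u}{u}\Bigr)^{q}\Bigr]d\Pi\ge\log c(K),
\]
with $u=u(\zeta)$, using Fatou / upper semicontinuity to handle the $-\infty$ values. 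By \cref{lem:endpoint-ineq} the integrand is at most $\log c(K)$, with equality only at $u=m_K$. Hence $u(\zeta)=m_K$ almost surely. By equality of the marginals \eqref{eq:hilb-marginals-equal}, also $u(\nu)=m_K$ almost surely.

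Third, I remove interior mass. On $\{u(\zeta)=m_K\}$,
\[
R_K(\nu,\zeta)=\int(1-\lambda/m_K)^2\,\nu(d\lambda)\le c(K)^2,
\]
and the inequality is strict unless $\nu$ is concentrated on $\{a,b\}$, because the integrand is strictly below $c(K)^2$ on $(a,b)$. Since $\widehat f_K\le\log c(K)$ almost surely with average equal to $\log c(K)$, we get $R_K=c(K)^2$ almost surely. Therefore $\nu(\{a,b\})=1$, and then $u(\nu)=m_K$ forces $\nu=\nu_K^*$. The same holds for $\zeta$ by equality of the marginals, so $\Pi=\delta_{\chi_K^*}$.

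\emph{General case.} For non-ergodic $\Pi$, apply the ergodic decomposition exactly as in \cref{prop:unique}. Almost every component has average at most $\log c(K)$, while the components average to $\log c(K)$. Hence almost every component is maximizing, and therefore equals $\delta_{\chi_K^*}$.

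The main obstacle is the second step. The band certificate is only an inequality, so the defect vanishing must be extracted carefully through the $\delta\downarrow0$ limit. This needs uniform convergence of $\log\Gamma_{\delta,S}$ away from $u\in\{a,b\}$, together with a truncation/Fatou argument where the logarithms diverge, since $u\in(a,b)$ holds pointwise but not uniformly.
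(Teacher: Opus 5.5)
Your proposal follows the paper's proof essentially step for step: persistent support with the global endpoints, summing the sharper band certificate \eqref{eq:hilb-band-certificate} along a recurrent generic point, midpoint rigidity via \cref{lem:endpoint-ineq}, removal of interior mass, and then the ergodic decomposition. Your reverse-Fatou passage $\delta\downarrow0$, using the uniform upper bound on $\log\Gamma_{\delta,S}$, is a valid substitute for the paper's monotone convergence.

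The one point you should make explicit is the order of choices. Unlike in \cref{prop:hilb-invariant-bound}, the observables $\log\Gamma_{\delta,S}\circ u$ now depend on $\delta$, so the band parameters must be fixed before the Birkhoff-generic point is chosen, rather than picked afterwards as continuity points of $\nu_0$ ``as before.'' The paper does this by taking the $\delta_j$ as non-atoms of the barycenter $\overline\nu$. Alternatively, you can drop continuity sets altogether: $\liminf_\ell\nu_{n_\ell}(L_\delta(S))\ge\nu_0(S\cap[a,a+\delta))>0$ already bounds $h_{\delta,S}(\nu_{n_\ell})$ from above, and the same holds for the upper band.
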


\begin{proof}
The state $\chi_K^*$ is regular, fixed, and has multiplier $c(K)$, while \cref{prop:hilb-invariant-bound} gives the matching upper bound.

It remains to prove uniqueness.  We first consider an ergodic maximizing measure $\Pi$.  Let $S$ be its persistent support from \cref{lem:hilb-persistent-support}.  The ergodic part of the preceding proof gives
\begin{equation}\label{eq:hilb-max-support-chain}
  \log c(K)
  =\int\widehat f_K\,d\Pi
  \le\log c(S)\le\log c(K).
\end{equation}
Hence $c(S)=c(K)$.  Since $S\subset K$ and $(t-s)/(t+s)$ is strictly decreasing in $s$ and strictly increasing in $t$, the endpoints of $S$ are the global endpoints $a$ and $b$.

Let $\overline\nu$ be the barycenter in \eqref{eq:hilb-ergodic-barycenter}.  Choose a decreasing sequence
\begin{equation}\label{eq:hilb-max-band-deltas}
  0<\delta_j<\frac{b-a}{2},\qquad \delta_j\downarrow0,
\end{equation}
such that
\begin{equation}\label{eq:hilb-max-no-boundary-atoms}
  \overline\nu(\{a+\delta_j\})
  =\overline\nu(\{b-\delta_j\})=0.
\end{equation}
For each $j$, \eqref{eq:hilb-ergodic-barycenter} and \eqref{eq:hilb-max-no-boundary-atoms} imply
\[
  \nu(\{a+\delta_j\})=\nu(\{b-\delta_j\})=0
\]
for $\Pi$-almost every state $(\nu,\zeta)$.  Since both the indices $j\ge1$ and the forward iterates are countable, invariance yields a $\Pi$-full Borel set $Z$ such that, whenever $\chi\in Z$ and
$\widehat T_K^n\chi=(\nu_n,\zeta_n)$,
\[
  \nu_n(\{a+\delta_j\})=\nu_n(\{b-\delta_j\})=0
  \qquad(n\ge0,\ j\ge1).
\]

Set
\[
  L_j=L_{\delta_j}(S),\qquad
  U_j=U_{\delta_j}(S),\qquad
  \Gamma_j=\Gamma_{\delta_j,S},\qquad
  h_j=h_{\delta_j,S}.
\]
For $N\in\mathbb N$, define
\begin{equation}\label{eq:hilb-max-truncated-observable}
  G_{j,N}(\nu,\zeta)
  =\max\{\log\Gamma_j(u(\zeta)),-N\},
  \qquad G_{j,N}(\theta)=-N.
\end{equation}
The functions $G_{j,N}$ are bounded and Borel.  Choose
$\chi=(\nu_0,\zeta_0)\in Y_S^\infty\cap Z$ that is recurrent and is Birkhoff generic for $\widehat f_K$ and for every $G_{j,N}$.  Write
$\chi_n=(\nu_n,\zeta_n)=\widehat T_K^n\chi$ and choose recurrence times $n_\ell$ with $\chi_{n_\ell}\to\chi$.

The sets $L_j$ and $U_j$ are continuity sets for $\nu_0$ relative to $S$.  Hence \cref{lem:hilb-restricted-weak} gives
$h_j(\nu_{n_\ell})\to h_j(\nu_0)$.  Since $G_{j,N}\ge\log\Gamma_j\circ u$, summing \eqref{eq:hilb-band-certificate} along the recurrence subsequence yields
\[
  \log c(K)
  \le\int G_{j,N}\,d\Pi.
\]
For $\lambda,u\in[a,b]$,
$|1-\lambda/u|\le(b-a)/a$.  Thus all $G_{j,N}$ and $\log\Gamma_j\circ u$ have the common finite upper bound
\[
  U_0=\log\max\left\{1,\frac{b-a}{a}\right\}.
\]
Letting $N\to\infty$ by monotone convergence applied to $U_0-G_{j,N}$ gives
\begin{equation}\label{eq:hilb-max-band-average}
  \int\log\Gamma_j(u(\zeta))\,d\Pi
  \ge\log c(K).
\end{equation}
As $j\to\infty$, the endpoint bands decrease and $\Gamma_j\downarrow\Gamma_0$ pointwise, where
\[
  \Gamma_0(u)
  =\left(\frac{u-a}{u}\right)^{b/(a+b)}
   \left(\frac{b-u}{u}\right)^{a/(a+b)}.
\]
Since $\log\Gamma_j\le U_0$, monotone convergence applied to the nonnegative increasing functions $U_0-\log\Gamma_j$, with $\log 0=-\infty$, gives
\[
  \int\log\Gamma_j(u(\zeta))\,d\Pi
  \longrightarrow
  \int\log\Gamma_0(u(\zeta))\,d\Pi.
\]
Combining this limit with \eqref{eq:hilb-max-band-average} gives
\[
  \int\log\Gamma_0(u(\zeta))\,d\Pi
  \ge\log c(K).
\]
By \cref{lem:endpoint-ineq}, the integrand is at most $\log c(K)$, with equality only at $u(\zeta)=m_K$.  Hence
\begin{equation}\label{eq:hilb-max-midpoint}
  u(\zeta)=m_K
  \qquad\text{almost surely}.
\end{equation}
Equality of the coordinate marginals in \eqref{eq:hilb-marginals-equal} gives $u(\nu)=m_K$ almost surely as well.

Using \eqref{eq:hilb-max-midpoint},
\begin{equation}\label{eq:hilb-endpoint-rigidity-final}
  r_K(\nu,\zeta)^2
  =\int_K\left(1-\frac{2\lambda}{a+b}\right)^2\nu(d\lambda)
  \le c(K)^2,
\end{equation}
with equality only when $\nu$ is supported on $\{a,b\}$.  On the $\Pi$-full regular set $Y_S^\infty$, one has $\widehat f_K=\log r_K\le\log c(K)$.  Since $\int\widehat f_K\,d\Pi=\log c(K)$, it follows that $r_K=c(K)$ almost surely.  Hence equality holds almost surely in \eqref{eq:hilb-endpoint-rigidity-final}.  But $\supp\nu=S$ almost surely, so $S=\{a,b\}$.  The midpoint conditions then give
\[
  \nu=\zeta=\frac12(\delta_a+\delta_b)=\nu_K^*
  \qquad\text{almost surely}.
\]
Thus every ergodic maximizing measure equals $\delta_{\chi_K^*}$.

Finally, let $\Pi$ be an arbitrary maximizing invariant measure and write its ergodic decomposition as $\Pi=\int\eta\,d\Lambda(\eta)$.  Applying the decomposition to the bounded truncations of $\widehat f_K$ and then using monotone convergence, as in the proof of \cref{prop:hilb-invariant-bound}, gives
\[
  \int\widehat f_K\,d\Pi
  =\int\left(\int\widehat f_K\,d\eta\right)d\Lambda(\eta).
\]
By \cref{prop:hilb-invariant-bound}, the inner integral is at most $\log c(K)$ for $\Lambda$-almost every component, while the left-hand side equals $\log c(K)$.  Hence almost every component is maximizing and therefore equals $\delta_{\chi_K^*}$.  Consequently $\Pi=\delta_{\chi_K^*}$.
\end{proof}

\begin{remark}\label{rem:hilbert-abstract-maximizer}
When $K=\spec(A)$, the balanced state $\chi_K^*$ is realized by a Hilbert-space vector if and only if
\[
  E_A(\{a\})\ne0,
  \qquad
  E_A(\{b\})\ne0.
\]
Equal-norm components in the two endpoint eigenspaces then generate the exact matched geometric orbit.

Continuous-spectrum root-factor equality can also occur without endpoint eigenvectors.  Let $\mathcal H=L^2([a,b],d\lambda)$, let $(Af)(\lambda)=\lambda f(\lambda)$, and take $g_0\equiv1$.  The spectrum $[a,b]$ is purely continuous, and symmetry about $(a+b)/2$ gives, under matched BB1,
\[
  \alpha_k=\frac{2}{a+b},
  \qquad
  g_k(\lambda)=\left(1-\frac{2\lambda}{a+b}\right)^k
  \qquad(k\ge0).
\]
With $c(K)=(b-a)/(b+a)$, the change of variables
$t=(2\lambda-a-b)/(b-a)$ yields
\[
  \frac{\norm{g_k}}{\norm{g_0}}
  =\frac{c(K)^k}{\sqrt{2k+1}},
  \qquad
  \rho_A(g_0,\alpha_0^{\rm mat})=c(K).
\]
For BB2 under its matched initialization, taking $g_0(\lambda)=\lambda^{-1/2}$ makes $A^{1/2}g_0=1$, so \cref{cor:hilb-weighted} yields the same root factor $c(K)$.
\end{remark}

\begin{lemma}\label{lem:hilb-semiuniform}
Let $K\subset(0,\infty)$ be compact with at least two points.  For every $\eta>0$, there exists $B_{K,\eta}<\infty$ such that every nonterminal orbit segment, meaning
\[
  \chi,\widehat T_K\chi,\ldots,\widehat T_K^{n-1}\chi\in X_K^{\rm reg},
\]
satisfies
\begin{equation}\label{eq:hilb-semiuniform}
  \sum_{k=0}^{n-1}\widehat f_K(\widehat T_K^k\chi)
  \le n\bigl(\log c(K)+\eta\bigr)+B_{K,\eta}
  \qquad(n\ge1).
\end{equation}
\end{lemma}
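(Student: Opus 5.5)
The plan is to mirror the finite-dimensional derivation of \cref{lem:uniform-orbit}: apply the abstract semi-uniform principle \cref{prop:abstract-semiuniform} to the compactified Hilbert-space dynamics. Take
\[
  X=\widehat X_K,\qquad T=\widehat T_K,\qquad D=\Sigma_K\cup\{\theta\},\qquad f=\widehat f_K,\qquad \beta=\log c(K).
\]
Since $K$ has at least two points, $c(K)>0$, so $\beta$ is a finite real number. Once the hypotheses are verified, the conclusion \eqref{eq:abstract-semiuniform} is exactly \eqref{eq:hilb-semiuniform}. The reason is that an orbit segment $\chi,\ldots,\widehat T_K^{n-1}\chi$ lying in $X_K^{\rm reg}$ is precisely a segment avoiding $D$.

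The hypotheses I will check, in order, are the following.
\begin{itemize}
\item $\widehat X_K$ is a compact metric space. This holds because $X_K=\cP(K)\times\cP(K)$ is compact and metrizable in the weak topology, and $\theta$ is an isolated point.
\item $\widehat T_K$ is Borel and all its discontinuities lie in $D$. By \cref{lem:hilb-dynamics-continuity}, $T_K$ is continuous on $X_K^{\rm reg}$. Since $R_K$ is continuous, $X_K^{\rm reg}$ is open in $X_K$, and hence relatively open in $\widehat X_K$. Therefore $\widehat T_K$ is continuous at every point of $X_K^{\rm reg}$. The map is Borel because it is continuous on an open set and constant (equal to $\theta$) on the closed complement $D$. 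The set $D$ itself is Borel because $\Sigma_K=\{R_K=0\}$ is closed.
\item $\widehat f_K$ is upper semicontinuous, bounded above, and equal to $-\infty$ on $D$. On $X_K^{\rm reg}$ it equals $\tfrac12\log R_K$, which is continuous. As $\chi\to\Sigma_K$ one has $R_K\to 0$, so $\log r_K\to-\infty$, and $\theta$ is isolated. For the upper bound, $|1-\lambda/u|\le(\max K-\min K)/\min K$ for $\lambda,u\in K$, which gives a finite bound on $\widehat f_K$.
\item Every invariant $\Pi$ with $\int\widehat f_K\,d\Pi>-\infty$ satisfies $\int\widehat f_K\,d\Pi\le\beta$. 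This is \cref{prop:hilb-invariant-bound}, which in fact covers every invariant measure.
\end{itemize}

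The only step needing care is the continuity bookkeeping. \cref{prop:abstract-semiuniform} requires that $\varphi\circ\widehat T_K$ be continuous off $D$, and this needs $\widehat T_K$ continuous at each regular point viewed as a point of the whole space $\widehat X_K$, not only as a map restricted to $X_K^{\rm reg}$. This follows from openness of $X_K^{\rm reg}$, as noted in the second item. With this verified, the constant $B_{K,\eta}=B_\eta$ from \cref{prop:abstract-semiuniform} gives \eqref{eq:hilb-semiuniform} for every nonterminal segment and every $n\ge1$.
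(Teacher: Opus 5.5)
Your proposal is correct and is exactly the paper's proof. The paper applies \cref{prop:abstract-semiuniform} with $X=\widehat X_K$, $T=\widehat T_K$, $D=\Sigma_K\cup\{\theta\}$, $f=\widehat f_K$, $\beta=\log c(K)$, citing \cref{lem:hilb-dynamics-continuity} for the regularity hypotheses and \cref{prop:hilb-invariant-bound} for the invariant-measure bound; your extra checks (openness of $X_K^{\rm reg}$, and upper semicontinuity of $\widehat f_K$ at $\Sigma_K$) only spell out what the paper states in one line, and your bound $|1-\lambda/u|\le(\max K-\min K)/\min K$ should be stated for $u\in[\min K,\max K]$ rather than $u\in K$, where it still holds.
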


\begin{proof}
Apply \cref{prop:abstract-semiuniform} with
\[
  X=\widehat X_K,\qquad T=\widehat T_K,\qquad
  D=\Sigma_K\cup\{\theta\},\qquad
  f=\widehat f_K,\qquad \beta=\log c(K).
\]
By \cref{lem:hilb-dynamics-continuity} and \eqref{eq:hilb-hat-map-potential}, $\widehat X_K$ is compact metrizable, $\widehat T_K$ is Borel with every discontinuity contained in $D$, and $\widehat f_K$ is upper semicontinuous, bounded above, and equal to $-\infty$ on $D$.  The invariant-measure hypothesis is \cref{prop:hilb-invariant-bound}.
\end{proof}

\subsection{Proof of the extension and weighted consequences}\label{subsec:hilb-proof}

\begin{proof}[Proof of \cref{thm:hilbert-main}]
Let $K_0=K(g_0)$.  By the standing zero-extension convention, singleton-support and finite-termination cases are immediate.  Thus assume that $K_0$ has at least two points and $g_1\ne0$.  The multiplier $I-\alpha_0A$ cannot create spectral mass, so $\nu_1$ is supported in $K_0$ and $\chi_1=(\nu_1,\nu_0)\in X_{K_0}$.  For every $n\ge2$ before termination, the states $\chi_1,\ldots,\chi_{n-1}$ form a nonterminal orbit segment and
\[
  \frac{\norm{g_n}}{\norm{g_0}}
  =\frac{\norm{g_1}}{\norm{g_0}}
   \prod_{k=1}^{n-1}r_{K_0}(\chi_k),
  \qquad \chi_k=\widehat T_{K_0}^{k-1}\chi_1.
\]
Thus, for every $\eta>0$, \cref{lem:hilb-semiuniform} gives
\[
  \norm{g_n}
  \le \norm{g_1}\exp(B_{K_0,\eta})
      \bigl(c(K_0)e^\eta\bigr)^{n-1}.
\]
Taking $n$th roots and then letting $\eta\downarrow0$ proves \eqref{eq:hilb-active-bound}.

If $M=m$, then the matched step terminates exactly, every $\gamma\in(0,1)$ is a uniform envelope rate, and the threshold is $0=\cA$.  Assume henceforth that $M>m$.  It remains to consider matched trajectories with nonsingleton initial support.  Their normalized orbits start from $\chi_0=(\nu_0,\nu_0)$ in the single compact state space $X_{\spec(A)}$.  Given $\gamma\in(\cA,1)$, choose $\eta=\log(\gamma/\cA)$.  For every $k\ge1$ before termination, the states $\chi_0,\ldots,\chi_{k-1}$ are nonterminal and \cref{lem:hilb-semiuniform} gives
\[
  \norm{g_k}\le \exp(B_{\spec(A),\eta})\gamma^k\norm{g_0}.
\]
Hence \eqref{eq:hilb-uniform-envelope} holds with
\[
  C=\max\{1,\exp(B_{\spec(A),\eta})\},
\]
which depends only on $A$ and $\gamma$.

To prove optimality of the uniform-envelope threshold, suppose that \eqref{eq:hilb-uniform-envelope} holds for some $C<\infty$ and some $\gamma\in(0,\cA)$.  Choose $N$ so large that
\begin{equation}\label{eq:hilb-N-choice}
  \cA^N>2C\gamma^N.
\end{equation}
For every $0<\delta<(M-m)/2$, the spectral projections
$E_A([m,m+\delta])$ and $E_A([M-\delta,M])$ are nonzero.  Indeed, if the first projection vanished, the spectral representation would give $A\ge(m+\delta)I$, contradicting $m=\min\spec(A)$; the upper endpoint is analogous.  Choose unit vectors $u_\delta$ and $v_\delta$ in these orthogonal ranges and set
\begin{equation}\label{eq:hilb-approx-vector}
  g_0^\delta=\frac{u_\delta+v_\delta}{\sqrt2}.
\end{equation}
Then $\norm{g_0^\delta}=1$, and orthogonality of the two reducing spectral subspaces gives, for every Borel set $B$,
\[
  \nu_{g_0^\delta}(B)
  =\frac12\nu_{u_\delta}(B)+\frac12\nu_{v_\delta}(B).
\]
The two measures on the right are supported in intervals shrinking to $m$ and $M$, respectively, so
\begin{equation}\label{eq:hilb-measure-convergence}
  \nu_{g_0^\delta}\Rightarrow
  \frac12\delta_m+\frac12\delta_M
  \qquad(\delta\downarrow0).
\end{equation}
Let $\chi_0^\delta=(\nu_{g_0^\delta},\nu_{g_0^\delta})$.  Then $\chi_0^\delta\Rightarrow \chi_{\spec(A)}^*$, and the limit is a regular fixed state with multiplier $\cA>0$.  By induction on $j=0,\ldots,N-1$, continuity of $T_{\spec(A)}$ and $r_{\spec(A)}$ at this fixed state implies that, for all sufficiently small $\delta$, the first $N$ states are regular, $\chi_j^\delta\Rightarrow \chi_{\spec(A)}^*$, and
\[
  r_{\spec(A)}(\chi_j^\delta)\longrightarrow\cA.
\]
Consequently,
\begin{equation}\label{eq:hilb-finite-horizon}
  \frac{\norm{g_N^\delta}}{\norm{g_0^\delta}}
  =\prod_{j=0}^{N-1}r_{\spec(A)}(\chi_j^\delta)
  \longrightarrow\cA^N.
\end{equation}
For sufficiently small $\delta$, the left-hand side exceeds $\cA^N/2>C\gamma^N$, contradicting the assumed envelope.  Hence no $\gamma\in(0,\cA)$ is uniform.

Finally, under \eqref{eq:hilb-endpoint-balance}, the matched spectral mean is $(a+b)/2$.  The two endpoint multipliers are
$(b-a)/(b+a)$ and its negative, so their squared energies remain balanced and \eqref{eq:hilb-exact-orbit} follows by induction.
\end{proof}

\begin{proof}[Proof of \cref{cor:hilb-weighted}]
Set $S=\Psi(A)^{1/2}$ and $\widetilde g_k=Sg_k$.  The bounds \eqref{eq:hilb-weight-bounds} make $S$ boundedly invertible, and $S$ commutes with $A$.  The first transformed update is
\[
  \widetilde g_1=(I-\alpha_0A)\widetilde g_0,
\]
and for every $k\ge1$ the weighted quotient is the BB1 Rayleigh quotient of $\widetilde g_{k-1}$.  Hence $\{\widetilde g_k\}$ is a BB1 gradient sequence for $A$ with the same first step.  Moreover,
\[
  \nu_{Sg}(B)
  =\frac{\int_B\Psi(\lambda)\,\nu_g(d\lambda)}
         {\int\Psi(\lambda)\,\nu_g(d\lambda)},
\]
so the uniform bounds in \eqref{eq:hilb-weight-bounds} make $\nu_{Sg}$ and $\nu_g$ mutually absolutely continuous and hence give them the same topological support.  Bounded norm equivalence preserves root factors and transfers the nonexistence of envelopes below the threshold.  The inequalities
\[
  \sqrt{\psi_-}\norm{g}\le\norm{Sg}\le\sqrt{\psi_+}\norm{g}
\]
give \eqref{eq:hilb-weighted-prefactor-transfer}.  The matched weighted initialization is the ordinary matched BB1 initialization for $\widetilde g_0$, and equal endpoint energies for $\widetilde g_0$ are exactly \eqref{eq:hilb-weighted-balance}.  For data supported on the $a$- and $b$-eigenspaces, both original components are then multiplied in modulus by $(b-a)/(b+a)$ at every step, which gives \eqref{eq:hilb-exact-orbit}.
\end{proof}

\begin{proof}[Proof of \cref{cor:hilb-uniform-family}]
If $m=M$, every matched trajectory terminates after one update.  Assume $m<M$ and work on $X_{[m,M]}$.  Since every scalar spectral measure is supported in $[m,M]$, it suffices under the standing zero-extension convention to estimate nonterminal segments (singleton-supported matched trajectories terminate immediately).  Every such segment satisfies \cref{lem:hilb-semiuniform} with the same constant $B_{[m,M],\eta}$ for all $h$.  Choose $\eta>0$ so that $c_{m,M}e^\eta<\gamma$ and set
\[
  C=\max\{1,\exp(B_{[m,M],\eta})\}.
\]
The semi-uniform product bound proves \eqref{eq:uniform-family-envelope}.  For BB2, the conjugacy $S_h=A_h^{1/2}$ and
\[
  \sqrt m\norm{g}\le\norm{S_hg}\le\sqrt M\norm{g}
\]
give the common prefactor $\sqrt{M/m}\,C$.
\end{proof}

\end{document}